\newcommand{\undt}[1]{\underline{t}_{#1}}
\newcommand{\ZZ}{\mathbb{Z}}
\newcommand{\FF}{\mathbb{F}}
\newcommand{\CC}{\mathbb{C}}
\newcommand{\QQ}{\mathbb{Q}}
\newcommand{\EE}{\mathbb{E}}
\newcommand{\NN}{\mathbb{N}}
\newcommand{\TT}{\mathbb{T}}
\DeclareSymbolFont{cyrletters}{OT2}{wncyr}{m}{n}
\DeclareMathSymbol{\Sha}{\mathalpha}{cyrletters}{"58}
\newtheorem{Theorem}{Theorem}
\newtheorem{Lemma}[Theorem]{Lemma}
\newtheorem{Proposition}[Theorem]{Proposition}
\newtheorem{Definition}[Theorem]{Definition}
\newtheorem{Remark}[Theorem]{Remark}
\date{March, 2017}
\title[Sum-shuffle formula]{A sum-shuffle formula for zeta values in Tate algebras}
\author{F. Pellarin}
\address{Institut Camille Jordan, UMR 5208 Site de Saint-Etienne, 23 rue du Dr. P. Michelon, 42023 Saint-Etienne,
France}
\email{federico.pellarin@univ-st-etienne.fr}
\keywords{Multiple zeta values, Function field arithmetic, Carlitz zeta values}
\thanks{This project has received funding from the European Research Council (ERC)
under the European Union's Horizon 2020 research and innovation programme under
the Grant Agreement No 648132.}
\begin{document}

\maketitle

\renewcommand{\abstractname}{Résumé}

\begin{abstract}

Nous démontrons une formule de mélange pour des valeurs zêta multiples dans des algèbres de Tate (en caractéristique non nulle) introduites dans \cite{PEL3}. Ce résultat se déduit d'un résultat analogue pour les sommes de puissances tordues et implique que le $\FF_p$-espace vectoriel des valeurs zêta multiples dans les algèbres de Tate est une $\FF_p$-algèbre.
\end{abstract}

\renewcommand{\abstractname}{Abstract}

\begin{abstract}
We prove a sum-shuffle formula 
for multiple zeta values in Tate algebras (in positive characteristic), introduced in \cite{PEL3}.
This follows from an analog result for double twisted power sums, implying 
that an $\FF_p$-vector space generated by multiple zeta values in Tate algebras is an $\FF_p$-algebra.
\end{abstract}

\maketitle

\section{Introduction}

Let $A=\FF_q[\theta]$ be the ring of polynomials in an indeterminate $\theta$  with coefficients in $\FF_q$ the finite field with 
$q$ elements and characteristic $p$, and let $K$ be the fraction field of $A$. We consider variables $t_i$
independent over $K$, for all $i \in\NN^*:=\{1,\ldots\}$ the set of positive natural numbers. For 
$\Sigma\subset\NN^*$ a finite subset, we denote by $\undt{\Sigma}$ the collection of variables
$(t_i)_{i\in\Sigma}$, so that $\FF_q(\undt{\Sigma})$, $K(\undt{\Sigma})$ denote the 
fields  $\FF_q(t_i:i\in\Sigma)$ etc.

We denote by $A^+$ the multiplicative monoid of monic polynomials of $A$ (in $\theta$) and,
for $d\geq 0$ an integer, we denote by $A^+(d)$ the subset of monic polynomials of $A$
of degree $d$. With $\Sigma\subset\NN^*$ a finite subset and for all $i\in\Sigma$, we denote by $\chi_{t_i}:A\rightarrow\FF_q[\underline{t}_\Sigma]$
the unique $\FF_q$-linear map which sends $\theta$ to $t_i$ (the notation does not reflect dependence on $\Sigma$ to avoid unnecessary complication). More generally,
we denote by $\sigma_\Sigma$ the 
semi-character $$\sigma_\Sigma:A^+\rightarrow\FF_q[\underline{t}_\Sigma]$$
defined by $\sigma_\Sigma(a)=\prod_{i\in\Sigma}\chi_{t_i}(a)$.
The associated {\em twisted power sum} of order $k$ and degree $d$ is the sum:
$$S_d(k;\sigma_\Sigma)=\sum_{a\in A^+(d)}a^{-k}\sigma_\Sigma(a)\in K[\underline{t}_\Sigma].$$
If $\Sigma=\emptyset$ we recover the power sums already studied by several authors; see Thakur's \cite{THA}
and the references therein. For general $\Sigma$ these sums have been the object of 
study, for example, in the papers \cite{ANG&PEL,DEM}.
These twisted power sums are also the basic tools to construct certain zeta values in Tate algebras,
see for example \cite{ANG&PEL,ANG&PEL2,APTR,PEL2}. We set:
$$\zeta_A(n;\sigma_\Sigma):=\sum_{d\geq 0}S_d(k;\sigma_\Sigma)\in \TT_\Sigma(K_\infty),$$
where, for $L$ a complete valued field, $\TT_\Sigma(L)$ denotes the completion of the polynomial
ring $L[\underline{t}_\Sigma]$ for the Gauss valuation.
In \cite{ANG&PEL2,APTR}, the following result is proved, where $|\Sigma|$ denotes the 
cardinality of $\Sigma$:
\begin{Theorem}\label{anglespellarin}
If $|\Sigma|\equiv1\pmod{q-1}$ and $s=|\Sigma|>1$, there exists a polynomial $\lambda_{1,\Sigma}\in A[\undt{\Sigma}]$, monic of degree $r:=\frac{s-q}{q-1}$ in $\theta$, such that
$$\zeta_A(1;\sigma_\Sigma)=(-1)^{\frac{s-1}{q-1}}\frac{\widetilde{\pi}\lambda_{1,\Sigma}}{\prod_{i\in\Sigma}\omega(t_i)},$$
where $\widetilde{\pi}$ is a fundamental period of Carlitz module, and $\omega$ denotes the 
Anderson-Thakur function.
\end{Theorem}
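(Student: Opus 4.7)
The plan is to show that the quantity
\[
\Lambda_\Sigma\; :=\; (-1)^{\frac{s-1}{q-1}}\,\frac{\zeta_A(1;\sigma_\Sigma)\,\prod_{i\in\Sigma}\omega(t_i)}{\widetilde{\pi}}
\]
belongs to $A[\undt{\Sigma}]$ and is monic of degree $r=(s-q)/(q-1)$ in $\theta$; one then sets $\lambda_{1,\Sigma}:=\Lambda_\Sigma$.

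The central tool is the $q$-th power Frobenius $\tau$ on the Tate algebra $\TT_\Sigma(\CC_\infty)$, which fixes each $t_i$ and acts as $c\mapsto c^q$ on scalar coefficients. I would combine the three transformation laws in play: (i)~the Anderson--Thakur functional equation $\tau(\omega(t))=(t-\theta)\omega(t)$, which yields $\tau\bigl(\prod_{i\in\Sigma}\omega(t_i)\bigr)=\prod_{i\in\Sigma}(t_i-\theta)\cdot\prod_{i\in\Sigma}\omega(t_i)$; (ii)~the standard transformation law for $\widetilde{\pi}$, which involves a $(q-1)$-th root of $-\theta$; and (iii)~a direct computation of $\tau(\zeta_A(1;\sigma_\Sigma))$ obtained by carefully rewriting $\tau(S_d(1;\sigma_\Sigma))$ and matching it against the higher-degree twisted power sums via the parametrization $A^+(d+1)=\{\theta a+b:a\in A^+(d),\,b\in \FF_q\}$. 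Assembling these ingredients should yield $\tau(\Lambda_\Sigma)=\Lambda_\Sigma$; the congruence $s\equiv 1\pmod{q-1}$ is exactly what is needed for the $(q-1)$-th roots present in the transformation rule for $\widetilde{\pi}$ to balance the $\prod_i(t_i-\theta)$ factors produced by the $\omega(t_i)$'s, and the sign $(-1)^{(s-1)/(q-1)}$ emerges from that bookkeeping.

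Once $\tau$-invariance is established, $\Lambda_\Sigma$ must lie in the $\tau$-fixed ring of $\TT_\Sigma(\CC_\infty)$, whose scalars reduce to $\FF_q$. Combining this with a Gauss-norm estimate on $\zeta_A(1;\sigma_\Sigma)$ and with the fact that the poles of $\prod_{i\in\Sigma}\omega(t_i)$ in the $\theta$-direction are incompatible with a $\tau$-invariant denominator then forces $\Lambda_\Sigma\in A[\undt{\Sigma}]$. Finally, comparing the dominant $\theta$-contribution of $\zeta_A(1;\sigma_\Sigma)\prod_{i\in\Sigma}\omega(t_i)$ (which comes from $S_0(1;\sigma_\Sigma)=1$) against the asymptotic $\theta$-order of $\widetilde{\pi}$ at infinity pins the $\theta$-degree at $r=(s-q)/(q-1)$ and yields monicity.

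The main obstacle I foresee is step~(iii): converting the defining sum of $\zeta_A(1;\sigma_\Sigma)$ into a usable relation under $\tau$, and tracking the signs and $(q-1)$-th root normalizations carefully enough to recover the precise factor $(-1)^{(s-1)/(q-1)}$, is where nearly all of the calculational effort will be concentrated; the condition $s\equiv 1\pmod{q-1}$ should reveal itself as the exact compatibility making the whole scheme close up.
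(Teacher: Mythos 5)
The paper does not prove Theorem \ref{anglespellarin}: it states it as a quoted result from \cite{ANG&PEL2} and \cite{APTR}, so there is no internal proof here to compare against. (Those references argue, respectively, via Gauss--Thakur sums and via Taelman's class/unit module formalism, which are not the bare $\tau$-fixed-point argument you propose.)

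Independently of that, the pivot of your plan fails. You claim to derive $\tau(\Lambda_\Sigma)=\Lambda_\Sigma$ for $\Lambda_\Sigma=(-1)^{(s-1)/(q-1)}\zeta_A(1;\sigma_\Sigma)\prod_{i\in\Sigma}\omega(t_i)/\widetilde{\pi}$. But the $\tau$-fixed subring of $\TT_\Sigma(\CC_\infty)$ is exactly $\FF_q[\undt{\Sigma}]$, which carries no $\theta$ at all, whereas the theorem asserts $\Lambda_\Sigma=\lambda_{1,\Sigma}$ is \emph{monic of $\theta$-degree $r=(s-q)/(q-1)$}, which is $>0$ as soon as $s\ge 2q-1$; and a polynomial of positive $\theta$-degree cannot be $\tau$-invariant, since $\tau$ multiplies the $\theta$-degree by $q$. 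Tracking your three ingredients precisely shows what actually comes out: $\tau$ applied termwise to $\sum_{a\in A^+}\sigma_\Sigma(a)/a$ gives $\tau(\zeta_A(1;\sigma_\Sigma))=\zeta_A(q;\sigma_\Sigma)$, while $\tau(\prod_i\omega(t_i))=\prod_i(t_i-\theta)\cdot\prod_i\omega(t_i)$ and $\tau(\widetilde{\pi})=\widetilde{\pi}^{\,q}$; assembling these yields a $\tau$-difference relation linking $\Lambda_\Sigma$ to the analogous quantity built from $\zeta_A(q;\sigma_\Sigma)$, not a fixed-point equation for $\Lambda_\Sigma$ itself. The missing — and genuinely hard — step is then to relate $\zeta_A(q;\sigma_\Sigma)$ back to $\zeta_A(1;\sigma_\Sigma)$, which is essentially the content of the theorem and exactly what \cite{ANG&PEL2,APTR} supply with heavier tools. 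As written, the proposal reduces the target to an unproved identity of comparable depth, and the subsequent "forces $\Lambda_\Sigma\in A[\undt{\Sigma}]$" step is also internally inconsistent with invariance (invariance would give the stronger $\FF_q[\undt{\Sigma}]$, which the theorem contradicts for $s>q$).
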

We also recall from \cite{PEL2} the formula
\begin{equation}\label{11}
\zeta_A(1;\chi_t)=\frac{\widetilde{\pi}}{(\theta-t)\omega(t)}
\end{equation} which complements Theorem 
\ref{anglespellarin} in the case $\Sigma=\{1\}$ (and $t=t_1$).
One captivating peculiarity of the above formulas is that they constitute a bridge to a class of quite simple, although apparently different objects. For example, 
the formula (\ref{11}) can be rewritten (see \cite{PEL2}) as
$$\zeta_A(1,\chi_t)=\prod_{i>0}\frac{1-\frac{t}{\theta^{q^i}}}{1-\frac{\theta}{\theta^{q^i}}},$$
and shows that $\zeta_A(1,\chi_t)$ is the reduction modulo $p$ of a simple 
formal series in $1+\frac{1}{\theta}\ZZ[t][[\frac{1}{\theta}]]$ (a "Mahler's series").
The dependency of $q$, although unavoidable, is very transparent, as it is only involved in the "raising to the power $q$"
process, while this is not necessarily visible at the first sight in the initial definition of $\zeta_A(1,\sigma_\Sigma)$, and neither is in its retranscription as an Eulerian product
$$\zeta_A(1,\sigma_\Sigma)=\prod_P\left(1-\frac{\chi_t(P)}{P}\right)^{-1},$$ running over the irreducible polynomials of $A^+$.

A similar phenomenon holds in the case $s=|\Sigma|>1$ of Theorem \ref{anglespellarin} for $q$ big enough.
The polynomial $\lambda_{1,\Sigma}$ itself, for example, is a polynomial monic in $\theta$ of degree $r$ dependent of $q$ in a very simple way (\footnote{The arithmetic properties of $\lambda_{1,\Sigma}$ still remain deep and mysterious, as pointed out for example in the papers \cite{APTR,ANDTR}, and the dependence becomes more unpredictable if $q$ is small.}),
as we have already pointed out, and 
it is easy to deduce, from the elegant arguments presented in \cite{ANDTR}
(see also the table therein), that 
the coefficient of $\theta^{r-1}$ in $\lambda_{1,\Sigma}$ is equal, for $q$ big enough, to the reduction modulo $p$ of
$$
-e_{s-q+1}(\undt{\Sigma})-e_{s-2(q-1)}(\undt{\Sigma})-\cdots-e_{q}(\undt{\Sigma})-\sum_{i\in\Sigma}t_i\in\ZZ[\undt{\Sigma}],$$
where $e_n(\undt{\Sigma})$ is the elementary symmetric polynomial of degree $n$ in the variables
$\undt{\Sigma}$ (this for $s\geq 2q-1$ and $s\equiv1\pmod{q-1}$, while in the case $s=q$, we have $\lambda_{1,\Sigma}=1$). These phenomena ultimately
arise because certain universality features of sequences of power sums hold, and more generally, the same principles govern the behavior of twisted power sums (see \cite{THA} and \cite{DEM}). 

In this paper, we analyze another aspect of the above principles.
We shall show (Theorem \ref{corsumshuffle}) that any product $\zeta_A(1,\sigma_U)\zeta_A(1,\sigma_V)$ ($U,V\subset\NN^*$, $U\cap V=\emptyset$) of such zeta values satisfies a {\em sum-shuffle product formula,} and this will be again deduced from 
properties of twisted power sums (Theorem \ref{simplesumshuffle}).
We deduce that a certain $\FF_p$-vector space of multiple zeta values
in Tate algebras also has a structure of $\FF_p$-algebra (Theorem \ref{conjecture2}).
The formula of Theorem \ref{corsumshuffle} is submitted to universal rules very similar to those of the above remarks: we can say, loosely, that they "almost lift to characteristic zero". 
\section{The result}

Before presenting the results, we have to now introduce multiple twisted power sums and multiple zeta values in our context.

\begin{Definition}\label{defpowersums}{\em 

Let $\Sigma\subset\NN^*$
be a finite subset. If $U,V$ are two subsets of $\Sigma$ such that 
$U\cap V=\emptyset$, we denote by $U\sqcup V$ their union.
Now, suppose that for an integer
$r>0$, we have subsets $U_i$ ($i=1,\ldots,r$) such that
$\Sigma=U_1\sqcup\cdots\sqcup U_r$. Further, let $d$ be a non-negative integer. We have the {\em multiple twisted power sum} of degree $d$ associated to this data:
$$S_d\left(\begin{matrix}\sigma_{U_1} & \sigma_{U_2} & \cdots & \sigma_{U_r}\\
n_1 & n_2 & \cdots & n_r\end{matrix}\right)=
S_{d}(n_1;\sigma_{U_1})\sum_{d> i_2> \cdots> i_r\geq 0}
S_{i_2}(n_2;\sigma_{U_2})\cdots S_{i_r}(n_r;\sigma_{U_r})\in K[\undt{\Sigma}].$$
The integer $\sum_in_i$ is called the 
{\em weight} and the integer $r$ is called its {\em depth}.}\end{Definition}
We can write in both ways $S_d(n;\sigma_\Sigma)=S_d\binom{\sigma_\Sigma}{n}$. Observe also that, if $\Sigma=\emptyset$, then $\sigma_\Sigma=\boldsymbol{1}$ the trivial semi-character.
$$S_d\left(\begin{matrix}\boldsymbol{1} & \boldsymbol{1} & \cdots & \boldsymbol{1}\\
n_1 & n_2 & \cdots & n_r\end{matrix}\right)=S_d(n_1,n_2,\ldots,n_r)\in K,$$
in the notations of Thakur. 

With $n_1,\ldots,n_r\geq 1$ and the semi-characters $\sigma_{U_1},\ldots,\sigma_{U_r}$
as above, we introduce the associated {\em multiple zeta value}
$$\zeta_A\left(\begin{matrix}\sigma_{U_1} & \sigma_{U_2} & \cdots & \sigma_{U_r}\\
n_1 & n_2 & \cdots & n_r\end{matrix}\right):=\sum_{d\geq 0}S_d\left(\begin{matrix}\sigma_{U_1} & \sigma_{U_2} & \cdots & \sigma_{U_r}\\
n_1 & n_2 & \cdots & n_r\end{matrix}\right)\in \TT_\Sigma(K_\infty).$$ The sum thus converges in the Tate algebra $\TT_\Sigma(K_\infty)$
where $K_\infty$ is the completion $\FF_q((1/\theta))$ of $K$ at the infinity place.
Explicitly, we have:
\begin{equation}\label{definitionmzv}
\zeta_A\left(\begin{matrix}\sigma_{U_1} & \sigma_{U_2} & \cdots & \sigma_{U_r}\\
n_1 & n_2 & \cdots & n_r\end{matrix}\right)=\sum_{d\geq 0}\sum_{a_1,\ldots,a_r\in A^+
\atop d=\deg_\theta(a_1)>\cdots>\deg_\theta(a_r)\geq0}\frac{\sigma_{U_1}(a_1)\cdots \sigma_{U_r}(a_r)}{a_1^{n_1}\cdots a_r^{n_r}}.\end{equation}
Again, the integer $\sum_in_i$ is called the 
{\em weight} of the above multiple zeta value and the integer $r$ is called its {\em depth}. These elements of the above considered Tate algebras
have been introduced  and first discussed in \cite{PEL3}.

\subsection{Non-vanishing of our multiple zeta values} 

To ensure that our multiple zeta values generate a non-trivial theory, we must now prove that 
they are not identically zero; this is the purpose of Proposition \ref{twists} below. The tools we use will be also crucial in other parts of the paper.

In \cite[Proposition 4]{PEL3} it was proved that the multiple zeta values (\ref{definitionmzv}) in $\TT_\Sigma(K_\infty)$, seen as functions of the variables $t_i\in\CC_\infty$ (for $i\in\Sigma$), where 
$\CC_\infty$ denotes the completion of an algebraic closure of $K_\infty$, extend to entire functions 
$\CC_\infty^{|\Sigma|}\rightarrow\CC_\infty$. We denote by $\EE_{\Sigma}(K_\infty)$ the sub-$K_\infty$-algebra of 
$\TT_\Sigma(K_\infty)$ whose elements extend to entire functions as above, so that all the multiple zeta values as in (\ref{definitionmzv}) belong to this sub-algebra. We also denote by 
$\tau:\TT_\Sigma(K_\infty)\rightarrow\TT_\Sigma(K_\infty)$ the unique continuous, open $\FF_q[\underline{t}_\Sigma]$-linear endomorphism which reduces to the map $c\mapsto c^q$ when restricted over $K_\infty$. 
Then, $\tau$ induces an $\FF_q[\underline{t}_\Sigma]$-linear endomorphism of $\EE_{\Sigma}(K_\infty)$.

We further have:
\begin{Proposition}\label{twists} Let us consider $\Sigma\subset\NN^*$ a finite subset as above, and subsets
$U_1,\ldots,U_r$ such that $\Sigma=U_1\sqcup\cdots\sqcup U_r$. If $j\in\Sigma$, we write $i_j$
for the unique integer $i\in\{1,\ldots,r\}$ such that $j\in U_i$.
Let us consider $n_1,\ldots,n_r$ positive integers, and let us denote by $f$ the multiple zeta value in  (\ref{definitionmzv}). Let $N$ be a non-negative integer. Let us also consider, for all $i=1,\ldots,r$ and $j\in U_i$, non-negative integers $k_{i,j}$ (hence, $i=i_j$).
We suppose that for all $i=1,\ldots,r$, 
\begin{equation}\label{conditionNk}
q^Nn_i>\sum_{j\in U_i}q^{k_{i,j}}.\end{equation} 
Then, the evaluation 
$$\tau^N(f)_{\begin{smallmatrix}t_j=\theta^{q^{k_{i_j,j}}}\\ j\in\Sigma\end{smallmatrix}}\in K_\infty,$$
well defined, is equal to the multiple zeta value of Thakur
$$\zeta_A\left(q^Nn_1-\sum_{j\in U_1}q^{k_{1,j}},\ldots,q^Nn_r-\sum_{j\in U_r}q^{k_{r,j}}\right).$$
In particular, the multiple zeta values as in (\ref{definitionmzv}) are all non-zero.
\end{Proposition}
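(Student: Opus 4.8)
The plan is to track, under the substitution $t_j = \theta^{q^{k_{i_j,j}}}$, what each $\chi_{t_j}$ does to a monic polynomial $a \in A^+$. Since $\chi_{t_j}$ is the unique $\FF_q$-linear map sending $\theta \mapsto t_j$, and $a = \sum_m c_m \theta^m$ with $c_m \in \FF_q$, we get $\chi_{t_j}(a) = \sum_m c_m t_j^m$; evaluating at $t_j = \theta^{q^{k}}$ gives $\sum_m c_m \theta^{m q^{k}} = (\sum_m c_m \theta^m)^{q^{k}} = a^{q^{k}}$, using that the $c_m$ lie in $\FF_q$ and Frobenius is additive. Hence for each $i$ and each $j \in U_i$, the specialized $\chi_{t_j}(a_i)$ equals $a_i^{q^{k_{i,j}}}$, so $\sigma_{U_i}(a_i)$ specializes to $a_i^{\sum_{j \in U_i} q^{k_{i,j}}}$. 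First I would record this as a lemma (or invoke the analogous computation from \cite{PEL3}, where the entireness and $\tau$-action were established).

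Next I would apply $\tau^N$ before specializing. Because $\tau$ is $\FF_q[\underline{t}_\Sigma]$-linear and continuous and acts as $c \mapsto c^q$ on $K_\infty$, applying it to the series (\ref{definitionmzv}) term by term raises each $a_i^{-n_i}$ to $a_i^{-q^N n_i}$ while leaving the $\sigma_{U_i}(a_i)$ (which lie in $\FF_q[\underline{t}_\Sigma]$) untouched; the summation set $\{d = \deg a_1 > \cdots > \deg a_r \ge 0\}$ is unchanged. So $\tau^N(f)$ is the same type of sum with $n_i$ replaced by $q^N n_i$. Then I specialize $t_j = \theta^{q^{k_{i_j,j}}}$ using the first step: the general term becomes
$$\frac{a_1^{\sum_{j \in U_1} q^{k_{1,j}}} \cdots a_r^{\sum_{j \in U_r} q^{k_{r,j}}}}{a_1^{q^N n_1} \cdots a_r^{q^N n_r}} = \frac{1}{a_1^{q^N n_1 - \sum_{j\in U_1} q^{k_{1,j}}} \cdots a_r^{q^N n_r - \sum_{j \in U_r} q^{k_{r,j}}}},$$
and the exponents $m_i := q^N n_i - \sum_{j \in U_i} q^{k_{i,j}}$ are strictly positive precisely by hypothesis (\ref{conditionNk}). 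Summing over the same index set gives exactly Thakur's multiple zeta value $\zeta_A(m_1,\ldots,m_r)$ by the defining series for the untwisted case ($\Sigma = \emptyset$), as recalled after Definition \ref{defpowersums}.

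The one genuine subtlety — and the main thing to argue carefully — is that the specialization $t_j \mapsto \theta^{q^{k_{i_j,j}}}$ is legitimate on $\tau^N(f)$ and commutes with the infinite sum. This is where I would lean on the result of \cite[Proposition 4]{PEL3}: $f$, hence $\tau^N(f)$, lies in $\EE_\Sigma(K_\infty)$, so it represents an entire function $\CC_\infty^{|\Sigma|} \to \CC_\infty$, and evaluation at any point of $\CC_\infty^{|\Sigma|}$ — in particular at $(\theta^{q^{k_{i_j,j}}})_{j}$ — is well-defined and computed by plugging into the series. Condition (\ref{conditionNk}) additionally guarantees that the resulting series $\sum_d \sum S_d(m_1,\ldots,m_r)$ converges in $K_\infty$ (each $m_i \ge 1$), so the value is finite and equals Thakur's MZV. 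Finally, for the last assertion: Thakur's multiple zeta values $\zeta_A(m_1,\ldots,m_r)$ with all $m_i \ge 1$ are known to be non-zero (they have a well-understood $\infty$-adic valuation; this is classical, cf. \cite{THA}), and one can always choose $N$ large and the $k_{i,j}$ (e.g.\ all zero) so that (\ref{conditionNk}) holds; since a specialization of $f$ is non-zero, $f$ itself is non-zero in $\TT_\Sigma(K_\infty)$.
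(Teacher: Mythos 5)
Your proof is correct and follows essentially the same route as the paper: both rely on the entireness result from \cite[Proposition 4]{PEL3} to justify the evaluation, on the identity $a(\theta^{q^k}) = a(\theta)^{q^k}$ for $a\in A$ to compute the specialized general term, and on Thakur's non-vanishing theorem together with a suitable choice of $N$ to conclude. The paper's own argument is terser but uses exactly these three ingredients; you have simply spelled out the term-by-term bookkeeping (the action of $\tau^N$ on $a_i^{-n_i}$ and the collapse of $\sigma_{U_i}(a_i)$ to a power of $a_i$ after specialization) in more detail.
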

\begin{proof} 
Here and in the following, the evaluation is operated after the application of the operator $\tau^N$ (note that these operations do not commute).
Since $f$ is entire by the remarks preceding the proposition, the evaluation is well defined in $K_\infty$ independently of the hypothesis on $N$ and the $k_{i,j}$'s. That the evaluation is a multiple zeta value of Thakur follows from the mentioned conditions, observing that if $a\in A=\FF_q[\theta]$, then 
$a(\theta^{q^k})=a(\theta)^{q^k}$ for all $k\in \ZZ$. 

Thakur's multiple zeta values are known to be non-zero (see \cite[Theorem 4]{THA}).
In particular, setting $k_{i,j}=0$ for all $i,j$, it is always possible to find $N$ such that 
(\ref{conditionNk}) holds. This implies that $f$, and the multiple zeta values as in (\ref{definitionmzv}) are all non-zero.
\end{proof}
\subsubsection{Example}\label{nonvanishingexample} In the case of $r=1$, we have that, for all $N\geq 0$ and $k_i\geq 0$ for $i\in\Sigma$ such that $q^N-\sum_{i\in\Sigma}q^{k_i}>0$,
\begin{equation}\label{evalrone}
\tau^N(\zeta_A(1,\sigma_\Sigma))_{\begin{smallmatrix}t_i=\theta^{q^{k_i}}\\
i\in\Sigma\end{smallmatrix}}=\zeta_A\left(q^N-\sum_{i\in\Sigma}q^{k_i}\right)\in K_\infty.\end{equation}
Since for each $m\in\NN^*$ there exists $N,k_i\geq 0$ (for $i\in\Sigma$) with $m=q^N-\sum_{i\in\Sigma}q^{k_i}$, there also exists, for $m$ given, $\Sigma\subset\NN^*$ a finite subset (actually, infinitely many finite such subsets) such that the Carlitz zeta value $\zeta_A(m)\in K_\infty$ comes from an evaluation 
of $\zeta_A(1,\sigma_\Sigma)$ of the same type as in (\ref{evalrone}). More generally, a similar property holds for the multiple zeta values of Thakur.

\subsection{The result}

We prove, in this paper, a sum shuffle formula for products $$\zeta_A(1,\sigma_U)\zeta_A(1,\sigma_V),$$ with $U\sqcup V=\Sigma\subset\NN^*$:
\begin{Theorem}\label{corsumshuffle}
The following formula holds, for all $\Sigma\subset\NN^*$ and $U\sqcup V=\Sigma$:
\begin{multline*}
\zeta_A\left(\begin{matrix} \sigma_U \\
1\end{matrix}\right)\zeta_A\left(\begin{matrix} \sigma_V \\
1\end{matrix}\right)-\zeta_A\left(\begin{matrix}\sigma_\Sigma \\
2\end{matrix}\right)= \\ \zeta_A\left(\begin{matrix}\sigma_U & \sigma_V \\
1 & 1\end{matrix}\right)+\zeta_A\left(\begin{matrix}\sigma_V & \sigma_U \\
1 & 1\end{matrix}\right)-\sum_{\begin{smallmatrix}I\sqcup J= \Sigma \\
|J|\equiv1\pmod{q-1}\\
J\subset U\text{ or }J\subset V\end{smallmatrix}}\zeta_A\left(\begin{matrix}\sigma_{I} & \sigma_{J} \\
1 & 1\end{matrix}\right).\end{multline*}

\end{Theorem}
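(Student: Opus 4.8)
The plan is to reduce Theorem~\ref{corsumshuffle} to a statement about the multiple twisted power sums $S_d$, since the zeta values are obtained by summing the $S_d$ over $d\geq 0$, and convergence in $\TT_\Sigma(K_\infty)$ lets us freely rearrange. Concretely, for each fixed pair of degrees we want an identity expressing the product $S_d(1;\sigma_U)\,S_e(1;\sigma_V)$ in terms of diagonal sums $S_d(2;\sigma_\Sigma)$ and iterated (strictly decreasing) double sums, plus a correction term indexed by the splittings $I\sqcup J=\Sigma$ with $|J|\equiv 1\pmod{q-1}$. This is exactly the shape of the ``double twisted power sum'' identity the introduction calls Theorem~\ref{simplesumshuffle}; so the first step is to state and prove that finite-level identity, and the second step is to sum it over all $d,e\geq 0$, split the range into $d>e$, $d=e$, $d<e$, and match terms.

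For the finite-level identity I would start from the Cartesian product
$$S_d(1;\sigma_U)\,S_e(1;\sigma_V)=\sum_{a\in A^+(d)}\sum_{b\in A^+(e)}\frac{\sigma_U(a)\sigma_V(b)}{ab}.$$
When $d\neq e$ the pair $(a,b)$ already has distinct degrees and contributes directly to an iterated sum (after summing over the smaller degree), so the interesting case is $d=e$. There one must handle pairs $a,b\in A^+(d)$ of the \emph{same} degree. The standard device in the Carlitz/Thakur world is to parametrize such pairs by their ``leading agreement'': either $a=b$, giving the diagonal term $S_d(2;\sigma_\Sigma)$ since $\sigma_U\sigma_V=\sigma_\Sigma$ on $A^+(d)$ (note $ab=a^2$ and $\sigma_U(a)\sigma_V(a)=\sigma_\Sigma(a)$); or $a\neq b$, in which case one writes $a=m+a'$, $b=m+b'$ for a common ``head'' $m$ and uses $\frac1{ab}$-partial-fractions style manipulations. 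The key point producing the $|J|\equiv 1\pmod{q-1}$ constraint is the elementary fact that $\sum_{c\in\FF_q^\times}c^{j}$ equals $-1$ if $(q-1)\mid j$ and $0$ otherwise: when one expands the semi-characters $\sigma_J$ of the ``overlap'' block $J$ and sums a monic-polynomial variable over a coset, the terms survive only when $|J|\equiv 1 \pmod{q-1}$. This is the mechanism I expect to reproduce the last sum in the theorem, and getting the bookkeeping of which block lies in $U$ versus $V$ correct (hence the disjunction $J\subset U$ or $J\subset V$) is the delicate combinatorial part.

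Granting the finite identity, the final step is routine but must be done carefully: sum over $d,e\geq 0$ inside $\TT_\Sigma(K_\infty)$. The $d>e$ (resp. $e>d$) ranges assemble into $\zeta_A\binom{\sigma_U\ \sigma_V}{1\ 1}$ (resp. $\zeta_A\binom{\sigma_V\ \sigma_U}{1\ 1}$) by the very definition~\eqref{definitionmzv} of the depth-$2$ multiple zeta values; the diagonal $d=e$ sum of $S_d(2;\sigma_\Sigma)$ is $\zeta_A\binom{\sigma_\Sigma}{2}$; and the correction terms, which are themselves finite-level iterated double sums indexed by $I\sqcup J=\Sigma$, reassemble into $\sum \zeta_A\binom{\sigma_I\ \sigma_J}{1\ 1}$ with the stated index set. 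Rearranging gives precisely the displayed equation. The main obstacle is unquestionably Step~2, the same-degree analysis: one must show that the ``$a\neq b$ of equal degree'' contribution collapses exactly to $\zeta_A\binom{\sigma_U\ \sigma_V}{1\ 1}+\zeta_A\binom{\sigma_V\ \sigma_U}{1\ 1}$ shifted by the degenerate diagonal terms, minus the $|J|\equiv 1$ sum, with no leftover pieces; everything else is formal manipulation of convergent series in the Tate algebra.
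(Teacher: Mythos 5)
Your top-level plan for Theorem \ref{corsumshuffle} is exactly the paper's: split the Cauchy product over $(d,e)$ into $d>e$, $d<e$, $d=e$, read off the two depth-$2$ zeta values from the off-diagonal ranges, apply the same-degree power-sum identity (the paper's Theorem \ref{simplesumshuffle}) on the diagonal, and resum over $d$. That part is sound and matches the paper's one-line derivation of Theorem \ref{corsumshuffle} from Theorem \ref{simplesumshuffle}.

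Where you have a genuine gap is in the finite-level identity itself, which is where essentially all of the paper's work lives. For $d=1$, your partial-fraction and $\sum_{c\in\FF_q^\times}c^j$ sketch only gets you to a congruence of the shape $P_{U,V}:=[1]\bigl(S_1(1,\sigma_U)S_1(1,\sigma_V)-S_1(2,\sigma_\Sigma)\bigr)\equiv -[1]\sum(-1)^nS_1(n+2,\sigma_{U\sqcup W})\pmod{\mathfrak{M}}$, a combination of \emph{higher-order} one-degree sums, not yet the claimed $\FF_p$-combination of the $S_1(1,\sigma_I)$. Bridging that requires: the $\FF_p$-space $\mathcal{V}_\Sigma$ spanned by $P_U=[1]S_1(1,\sigma_U)$; its stability under the higher derivations $\mathcal{D}_n$ (Proposition \ref{dstability}, a two-level induction); the congruence $P_U^{(1+n)}\equiv(-1)^n\mathcal{D}_n(P_U)\pmod{\mathfrak{M}}$ (Lemma \ref{congruencemodM}) to place $P_{U,V}$ in $\mathcal{V}_\Sigma$; and then a dimension count (Lemma \ref{basis}) plus a specialization to Chen's explicit formula to pin the coefficients as exactly $-1$ on splittings with $J\subset U$ or $J\subset V$ and $|J|\equiv 1\pmod{q-1}$. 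Your ``delicate bookkeeping'' flag points the right way but leaves essentially all of this undone. Likewise, the passage to general $d$ is not a loose ``common head'' heuristic: the paper uses Thakur's partition of $A^+(d)\times A^+(d)\setminus\Delta$ into sets $S_{n,m}=\{(n+\mu m,\,n+\nu m):\mu\neq\nu\in\FF_q\}$ with $n\in A^+(d)$, $m\in A^+(<d)$, exploited through the multiplicative extension $\sigma_U(n+\mu m)=\sigma_U(m)\,\sigma_U(n/m+\mu)$ to transport the $d=1$ case to all $d$.
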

The reader will notice the universality phenomenon mentioned above:
the coefficients of the right-hand side of the above formula are $0,1,-1$
and they are determined upon a simple divisibility by $q-1$ condition, and 
the position $I,J$ of the subsets of $\Sigma$ relative to $U,V$. 
We will use techniques of Thakur in \cite{THA2} and some 
linear algebra over $\FF_p$ to show the existence of
the coefficients. To compute them, as it is easily verified that they are uniquely determined, we will use the sum-shuffle formula
of Chen \cite{CHE}, which also arises in several ways from our formula by specialization. We recall this result.
\begin{Theorem}[Chen]
For all $n,m>0$,
$$\zeta_A(m)\zeta_A(n)-\zeta_A(m+n)=\sum_{0<j<m+n\atop q-1|j}f_j\zeta_A(m+n-j,j),$$
where $$f_j=(-1)^{m-1}\binom{j-1}{m-1}+(-1)^{n-1}\binom{j-1}{n-1}.$$
\end{Theorem}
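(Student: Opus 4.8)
The plan is to descend to the level of the untwisted power sums $S_d(k)=\sum_{a\in A^+(d)}a^{-k}$ and to reduce Chen's identity to a single partial-fraction computation, following the same pattern of passage from power sums to zeta values used elsewhere in this paper. The starting point is the decomposition
$$\zeta_A(m)\zeta_A(n)=\sum_{a,b\in A^+}\frac{1}{a^m b^n}=\zeta_A(m,n)+\zeta_A(n,m)+\sum_{d\ge 0}S_d(m)S_d(n),$$
obtained by splitting $A^+\times A^+$ according to whether $\deg_\theta a>\deg_\theta b$, $\deg_\theta a<\deg_\theta b$, or $\deg_\theta a=\deg_\theta b$. Hence it suffices to obtain, for each $d$, a closed formula writing $S_d(m)S_d(n)$ as $S_d(m+n)$ plus a linear combination of products $S_{<d}(j)\,S_d(m+n-j)$, where $S_{<d}(j):=\sum_{e<d}S_e(j)$; summing over $d$ then turns $\sum_d S_d(m+n)$ into $\zeta_A(m+n)$ and each $\sum_d S_d(m+n-j)\,S_{<d}(j)$ into $\zeta_A(m+n-j,j)$.

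The heart of the matter is the partial-fraction analysis of the off-diagonal part $\sum_{a\ne b\in A^+(d)}a^{-m}b^{-n}$. Putting $c:=a-b$, so that $c\neq 0$ and $\deg_\theta c<d$, gives a bijection $(a,b)\leftrightarrow(b,c)$ with $b\in A^+(d)$, $0\ne c$, $\deg_\theta c<d$, and one feeds in the elementary identity
$$\frac{1}{(X+c)^m X^n}=\sum_{i=1}^{n}(-1)^{n-i}\binom{m+n-i-1}{m-1}\frac{X^{-i}}{c^{\,m+n-i}}+(-1)^{n}\sum_{i=1}^{m}\binom{m+n-i-1}{n-1}\frac{(X+c)^{-i}}{c^{\,m+n-i}}$$
with $X=b$. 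Summing over $b\in A^+(d)$ replaces both $X^{-i}$ and $(X+c)^{-i}$ by $S_d(i)$, so that everything comes down to the sums $\sum_{0\ne c,\ \deg_\theta c<d}c^{-j}$. Writing $c=\lambda c_0$ with $\lambda\in\FF_q^\times$ and $c_0\in A^+$ of degree $<d$, and invoking the fact that $\sum_{\lambda\in\FF_q^\times}\lambda^{-j}$ equals $-1$ when $(q-1)\mid j$ and $0$ otherwise, kills every $j$ with $(q-1)\nmid j$ --- this is precisely where the divisibility condition in Chen's formula is born --- and leaves $-S_{<d}(j)$ for the surviving $j$. After the substitution $j=m+n-i$ the binomials that remain are exactly $\binom{j-1}{m-1}$ and $\binom{j-1}{n-1}$.

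It then remains to sum over $d$, reindex, and collect terms: this presents $\zeta_A(m)\zeta_A(n)-\zeta_A(m+n)$ as $\zeta_A(m,n)+\zeta_A(n,m)$ together with an $\FF_p$-linear combination of the $\zeta_A(m+n-j,j)$ with $(q-1)\mid j$. Observe that the slots $j=n$ and $j=m$ are exactly the ones carrying $\zeta_A(m,n)$ and $\zeta_A(n,m)$ themselves, and the task is to check that, once these boundary contributions are absorbed, the coefficient of each $\zeta_A(m+n-j,j)$ collapses to $f_j=(-1)^{m-1}\binom{j-1}{m-1}+(-1)^{n-1}\binom{j-1}{n-1}$ and everything else cancels. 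I expect this final reckoning to be the main obstacle: one has to reconcile the three sign sources --- the $(-1)^{n-i}$ and the global $(-1)^{n}$ coming out of the partial fractions, and the $-1$ coming from $\sum_\lambda\lambda^{-j}$ --- against the degree-ordering convention adopted for the double zeta values, and verify the cancellations exactly; low-weight checks and consistency with Thakur's finite power-sum relations in \cite{THA} offer a concrete way to pin down the constants. No convergence issue intervenes, all series in sight converging in $K_\infty$.
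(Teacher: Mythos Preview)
The paper does not give a proof of this theorem at all: it is simply quoted from Chen's paper \cite{CHE} and then used (only in its power--sum incarnation, equation (\ref{chens})). So there is no ``paper's proof'' to compare against; your partial--fraction argument is in fact essentially Chen's own method, and the computation you outline is correct and cleanly yields the degree--$d$ identity
\[
S_d(m)S_d(n)-S_d(m+n)=\sum_{\substack{0<j<m+n\\ q-1\mid j}} f_j\,S_d(m+n-j,j),
\]
which is exactly the power--sum statement the paper actually invokes.

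The genuine gap is your last ``absorption'' step. Summing the displayed identity over $d\ge 0$ gives
\[
\zeta_A(m)\zeta_A(n)-\zeta_A(m+n)=\zeta_A(m,n)+\zeta_A(n,m)+\sum_{\substack{0<j<m+n\\ q-1\mid j}} f_j\,\zeta_A(m+n-j,j),
\]
because $\sum_d S_d(m)S_d(n)=\zeta_A(m)\zeta_A(n)-\zeta_A(m,n)-\zeta_A(n,m)$. The two extra stuffle terms cannot be absorbed into the $j=n$ and $j=m$ slots as you propose: those slots only appear in the sum when $q-1\mid n$ (resp.\ $q-1\mid m$), and even then the coefficient $f_j$ is already fixed by the partial--fraction computation and does not accommodate an additional $+1$. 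A concrete obstruction: for $q=3$ and $m=n=1$ the sum on the right of the stated theorem is empty, whereas your (correct) derivation gives $\zeta_A(1)^2-\zeta_A(2)=2\,\zeta_A(1,1)\neq 0$. In other words, the statement recorded in the paper is missing the terms $\zeta_A(m,n)+\zeta_A(n,m)$; this is consistent both with Chen's original formulation and with the paper's own Theorem~\ref{corsumshuffle}, whose right--hand side explicitly carries the analogous terms $\zeta_A\left(\begin{smallmatrix}\sigma_U & \sigma_V\\ 1 & 1\end{smallmatrix}\right)+\zeta_A\left(\begin{smallmatrix}\sigma_V & \sigma_U\\ 1 & 1\end{smallmatrix}\right)$. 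So your method is right; what it proves is the corrected identity, and no further ``reconciliation of signs'' is needed once the missing terms are restored.
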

It is further possible to deduce, by quite standard methods (applying 
a twisted Frobenius endomorphism a certain amount of times and specializing some variables), the following:
 \begin{Theorem}\label{conjecture2}
For all $\Sigma\subset\NN^*$ a finite subset, the $\FF_p$-subvector space of $\TT_\Sigma(K_\infty)$ generated by the multiple zeta values
(\ref{definitionmzv}) is an $\FF_p$-algebra.\end{Theorem}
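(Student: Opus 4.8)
The plan is to deduce the statement from the sum--shuffle formula of Theorem~\ref{corsumshuffle} (together with its twisted--power--sum counterpart, Theorem~\ref{simplesumshuffle}) by the ``twist and specialise'' technique that already appears, in one direction, in Proposition~\ref{twists}. It is enough to show that the product of any two values of the shape (\ref{definitionmzv}) is again an $\FF_p$--linear combination of such values; write $\mathcal Z$ for the $\FF_p$--span of the values (\ref{definitionmzv}), and, relabelling the variables of one of the two factors if necessary, assume that the factors involve disjoint sets of variables.

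First I would record how $\tau$ and the substitutions $t_j\mapsto\theta^{q^k}$ act on (\ref{definitionmzv}). Since $\tau$ is a ring endomorphism fixing $\FF_q[\undt{\Sigma}]$ and acting as $c\mapsto c^q$ on $K_\infty$, one has $\tau\bigl(S_d(n;\sigma_W)\bigr)=S_d(nq;\sigma_W)$, hence
$$\tau^N\,\zeta_A\left(\begin{matrix}\sigma_{W_1}&\cdots&\sigma_{W_r}\\ n_1&\cdots&n_r\end{matrix}\right)=\zeta_A\left(\begin{matrix}\sigma_{W_1}&\cdots&\sigma_{W_r}\\ q^Nn_1&\cdots&q^Nn_r\end{matrix}\right);$$
in particular $\tau$ maps $\mathcal Z$ into itself and, being multiplicative, commutes with products. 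On the other hand, if $j\in W_i$ and the $i$--th entry $w$ satisfies $w>q^k$, then the substitution $t_j=\theta^{q^k}$ turns $\chi_{t_j}(a)$ into $a^{q^k}$ inside the $i$--th family of power sums, so it sends (\ref{definitionmzv}) to the value of the same shape whose $i$--th column is replaced by $\binom{\sigma_{W_i\setminus\{j\}}}{w-q^k}$; this is the variant of Proposition~\ref{twists} in which only part of the variables are specialised, and it again lands in $\mathcal Z$. One also has $\tau\circ(\,\cdot\,|_{t_j=\theta^{q^k}})=(\,\cdot\,|_{t_j=\theta^{q^{k+1}}})\circ\tau$, which lets one reorder twists and substitutions freely.

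Using this, I would realise an arbitrary value (\ref{definitionmzv}) as the image of an ``all ones'' value: choose $N$ with $q^N\geq\max_i n_i$, enlarge each slot $U_i$ by a disjoint set of fresh variables $E_i$, one for each nonzero base--$q$ digit of $q^N-n_i$, so that
$$\zeta_A\left(\begin{matrix}\sigma_{U_1}&\cdots&\sigma_{U_r}\\ n_1&\cdots&n_r\end{matrix}\right)=\Bigl(\tau^N\,\zeta_A\left(\begin{matrix}\sigma_{U_1\sqcup E_1}&\cdots&\sigma_{U_r\sqcup E_r}\\ 1&\cdots&1\end{matrix}\right)\Bigr)\Big|_{t_j=\theta^{q^{k_j}}},$$
the exponents $k_j$ being the said digits and the substitution being performed one variable at a time (the running entry staying $\geq n_i\geq1$ at each step). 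Carrying this out for the two factors $F,G$, with all ``kept'' and all ``extra'' variables pairwise disjoint and with one common $N$, and using that $\tau^N$ is multiplicative while the two batches of substitutions act on disjoint variables, one gets $FG=\Phi(\widetilde F\,\widetilde G)$, where $\Phi$ is the operator ``apply $\tau^N$, then substitute all extra variables'' and $\widetilde F,\widetilde G$ are the associated all--ones values. Now I would invoke the all--ones sum--shuffle formula: its depth--one case is Theorem~\ref{corsumshuffle} itself, and the general case follows from that theorem together with Theorem~\ref{simplesumshuffle} by the familiar stuffle recursion on the depth (peel off the leading slot of each factor, compare the two leading degrees, and resolve the diagonal $S_d(1;\sigma)S_d(1;\sigma')$ via Theorem~\ref{simplesumshuffle}). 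This yields $\widetilde F\,\widetilde G=\sum_k c_k H_k$ with $c_k\in\FF_p$ and each $H_k$ again of the shape (\ref{definitionmzv}), so that $FG=\sum_k c_k\,\Phi(H_k)$.

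The remaining, and in my view hardest, point is to check that every $\Phi(H_k)$ again lies in $\mathcal Z$. The ``pure interleaving'' and ``full diagonal'' terms cause no trouble: since under the stuffle the weight of a merged slot is the \emph{sum} of the merged weights, the $q^N$--scaling produced by $\tau^N$ stays exactly in step with the amount of extra mass to be removed, and $\Phi$ turns these terms into the expected products of $\zeta_A\binom{\sigma_{U_i}}{n_i}$-- and $\zeta_A\binom{\sigma_{U_i\cup U'_j}}{n_i+m_j}$--type pieces. The delicate terms are the correction terms coming from the ``$J\subseteq U$ or $J\subseteq V$'' sum of Theorem~\ref{corsumshuffle} (and of its power--sum analogue), where a weight--one slot may absorb all the extra variables attached to two different slots; for these one cannot simply substitute, and one must run the reduction more carefully --- choosing $N$ and the base--$q$ digit patterns slot by slot so that no entry becomes non-positive, and, whenever a would-be non-positive entry is unavoidable, re--expressing the offending term through the already established lower--weight instances of the formula and the $\FF_p$--linear algebra of Theorem~\ref{corsumshuffle}. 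This is precisely where the ``universality'' of (double) twisted power sums in the spirit of \cite{THA2,DEM} is used; carrying out this bookkeeping is the substantive part of the argument. Once it is done, each $\Phi(H_k)$ lies in $\mathcal Z$, hence $FG\in\mathcal Z$, and the theorem follows.
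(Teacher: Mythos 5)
You correctly read the paper's hint: it says the theorem is deduced ``by quite standard methods (applying a twisted Frobenius endomorphism a certain amount of times and specializing some variables)'', and the paper gives no further details. Your plan --- twist by $\tau^N$, adjoin fresh variables, realise arbitrary weights as specialisations of all-ones values, multiply, expand via the sum-shuffle, and push the expansion back through $\Phi$ --- is exactly the natural reading of that hint, and the formal bookkeeping in your first two paragraphs is fine: $\tau$ and the partial specialisations $t_j\mapsto\theta^{q^k}$ are ring maps, they commute as you say, and they do send (\ref{definitionmzv}) to values of the same shape as long as every resulting weight stays $\geq 1$.

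The problem is the clause ``as long as every resulting weight stays $\geq 1$'', and you have put your finger on it but not closed it. Concretely: write $\Sigma_{\mathrm{big}}=U\sqcup E\sqcup V\sqcup E'$ with $\sum_{j\in E}q^{k_j}=q^N-m$, $\sum_{j\in E'}q^{k'_j}=q^N-n$. A correction term in Theorem~\ref{corsumshuffle} with $J\subset U\sqcup E$ has, after $\Phi$, second slot of weight $q^N-\sum_{j\in J\cap E}q^{k_j}\geq m>0$ (fine), but first slot of weight $q^N-\sum_{j\in E\setminus J}q^{k_j}-\sum_{j\in E'}q^{k'_j}=m+n-q^N+\sum_{j\in J\cap E}q^{k_j}$. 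For any $J$ with $J\cap E=\emptyset$ (for instance $J$ a singleton inside $U$, so $|J|=1\equiv 1\pmod{q-1}$), this is $m+n-q^N<0$ once $N$ is large enough to make the construction work. At the level of Theorem~\ref{corsumshuffle} this is not even a benign issue of ``an MZV with non-standard parameters'': an entry $\leq 0$ makes the $d$-series for (\ref{definitionmzv}) diverge, so $\Phi$ applied term-by-term to that identity is not defined. Your last paragraph acknowledges this and says one should ``re-express the offending term through the already established lower-weight instances of the formula'', but no mechanism is given for doing so, and it is not clear one exists at that level of generality. The concrete missing ingredient is a sum-shuffle identity for $S_d(m,\sigma_U)\,S_d(n,\sigma_V)$ for \emph{all} $m,n\geq 1$, with $\FF_p$ coefficients and with only positive entries on the right; this is what the stuffle recursion on depth actually consumes. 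That identity does not fall out of the $(1,1)$ case by $\Phi$ for the reason above; it should instead be proved by redoing \S 3.1 (the $d=1$ computation, Lemma~\ref{congruencemodM}, Proposition~\ref{dstability}, and the $\psi_\mu$-weight argument of Proposition~\ref{propositiondequal1}) for general $(m,n)$ and then bootstrapping to $d\geq 1$ via the partition argument of \S\ref{endofproofoftheorem}, in the spirit of Thakur \cite{THA2} and Chen \cite{CHE}. Until that step is supplied, the argument has a genuine gap. (To be fair, the paper itself only asserts the theorem; so there is no published proof for you to have matched.)
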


\section{sum-shuffle relations}

We follow the main idea of Thakur in \cite{THA2}, where he proves a 
sum-shuffle formula for the product of two power sums and he deduces
from this result that the $\FF_p$-sub-vector space 
of $K_\infty$ generated by his multiple zeta values $\zeta_A(n_1,\ldots,n_k)$ is an $\FF_p$-algebra
(this is the case $\Sigma=\emptyset$). Thakur's result thus relies in 
universal families of sum-shuffle relations for power sums products that he proves 
by reducing to the case of one-degree power sums (the case of $d=1$), a technique which is also naturally suggested
by the philosophy of ``solitons". We follow the principles of this proof. 
The main difference between this part of our proof and Thakur's 
is situated in the case of $d=1$, which presents 
new structures.

\subsection{Sum-shuffle formulas for power sums}

We shall prove:
\begin{Theorem}\label{simplesumshuffle}
Let $U,V$ be subsets of $\Sigma$ such that $U\sqcup V=\Sigma$. Then, the following formula holds
$$S_d\left(\begin{matrix} \sigma_U \\ 1 \end{matrix}\right)
S_d\left(\begin{matrix} \sigma_V \\ 1 \end{matrix}\right)-
S_d\left(\begin{matrix} \sigma_\Sigma \\ 2 \end{matrix}\right)=-\sum_{\begin{smallmatrix}I\sqcup J=\Sigma
\\
|J|\equiv1\pmod{q-1}\\
J\subset U\text{ or }J\subset V\end{smallmatrix}}S_d\left(\begin{matrix} \sigma_{I} & 
\sigma_J\\ 1 & 1 \end{matrix}\right),\quad d\geq 0.$$
\end{Theorem}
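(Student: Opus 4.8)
The plan is to reduce everything to the case $d=1$, following the soliton philosophy of Thakur \cite{THA2}. First I would unwind the definitions: both sides are polynomials in $K[\undt{\Sigma}]$, and the left-hand side is
\[
S_d\binom{\sigma_U}{1}S_d\binom{\sigma_V}{1}=\sum_{a,b\in A^+(d)}\frac{\sigma_U(a)\sigma_V(b)}{ab},
\]
which I would split according to whether $\deg_\theta(a-b)<d$ (the "diagonal-ish" part) or $a=b$. When $a=b$ we get $\sum_a a^{-2}\sigma_U(a)\sigma_V(a)=\sum_a a^{-2}\sigma_\Sigma(a)=S_d\binom{\sigma_\Sigma}{2}$ since $\sigma_U\sigma_V=\sigma_\Sigma$ on $A^+$; this explains the subtraction on the left. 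The remaining terms, where $a\ne b$ but $\deg_\theta a=\deg_\theta b=d$, need to be matched against the depth-two multiple power sums $S_d\binom{\sigma_I\ \sigma_J}{1\ 1}=S_d(1;\sigma_I)\sum_{d>i\ge 0}S_i(1;\sigma_J)$ on the right.

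The key reduction is that both sides, as functions of $d$, are governed by what happens in degree one. Concretely, I would introduce the "leading term at degree $d$" map sending $a\in A^+(d)$ to $a/\theta^{d-1}\in$ (the residue polynomials of degree $\le 1$), and observe that the relevant difference $1/(ab)-\cdots$ telescopes in the second index: the inner sum $\sum_{d>i\ge 0}S_i(1;\sigma_J)$ collapses under the recursion $\zeta$-type identity so that one only has to control the interaction of $S_d$ with $S_{d-1}$, hence with degree one after a change of variable $\theta\mapsto$ (uniformizer). This is precisely the mechanism by which Thakur reduces his power-sum shuffle to $d=1$, and the same bookkeeping works here once one tracks the semi-character $\sigma_\Sigma(a)=\sigma_I(a)\sigma_J(a)$ multiplicatively along monic polynomials.

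The genuinely new input, and the main obstacle, is the case $d=1$ itself, where the structure differs from Thakur's scalar case. Here $A^+(1)=\{\theta+c:c\in\FF_q\}$, and one must compute $\sum_{c,c'\in\FF_q}\frac{\sigma_U(\theta+c)\sigma_V(\theta+c')}{(\theta+c)(\theta+c')}$ and compare it with the degree-one depth-two sums. Using $\chi_{t_i}(\theta+c)=t_i+c$, the computation becomes an identity in $\FF_q$-power sums of the shape $\sum_{c\in\FF_q}(t_i+c)^{-1}\prod(\cdots)$, and the constraint $|J|\equiv 1\pmod{q-1}$ arises exactly from the classical vanishing $\sum_{c\in\FF_q}c^{m}=0$ unless $(q-1)\mid m$ and $m>0$ — the subsets $J$ that survive the $\FF_q$-summation are those whose size makes the relevant exponent a nonzero multiple of $q-1$, while $J\subset U$ or $J\subset V$ records from which of the two factors the block $J$ was extracted. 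I would set this up as a finite linear-algebra identity over $\FF_p$ (following the "linear algebra over $\FF_p$" remark after Theorem \ref{corsumshuffle}), proving first that coefficients making the identity hold exist, and then that they are forced to be $0,\pm1$ with the stated combinatorial description, cross-checking against Chen's formula by the specializations described after Proposition \ref{twists} (setting all $t_i=\theta^{q^{k_i}}$ and applying $\tau^N$). Propagating the degree-one identity back to general $d$ by the telescoping argument above then yields the theorem for all $d\ge 0$; the two boundary cases $d=0$ (both sides $0$, or a trivial check for $|\Sigma|$ small) and the degenerate $U=\emptyset$ or $V=\emptyset$ are handled directly.
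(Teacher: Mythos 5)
Your plan has the right global shape---reduce to $d=1$, use $\FF_q$-power-sum vanishing for the $q-1$ divisibility, $\FF_p$-linear algebra for the existence of coefficients, and Chen's formula via specialization to pin them down---and these are all genuine ingredients of the paper's proof. But there are two real gaps. The first is in the $d=1$ case, which you correctly flag as the crux but then leave the hardest step unaddressed. After expanding the geometric series and using $\sum_{\eta\in\FF_q^\times}\eta^m\ne 0$ iff $q-1\mid m$, what falls out of the computation is not a combination of the weight-one sums $S_1(1;\sigma_I)$ but a (finite, modulo the maximal ideal $\mathfrak{M}$) combination of higher-weight sums $S_1(n+2;\sigma_{U\sqcup W})$ indexed by decompositions $M\sqcup W=V$ with $|M|+n\equiv 0\pmod{q-1}$. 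Getting from those back to weight $1$ is exactly what the paper's Proposition \ref{dstability} (stability of $\mathcal{V}_\Sigma=\operatorname{Vect}_{\FF_p}(P_U:U\subset\Sigma)$, $P_U=[1]S_1(1;\sigma_U)$, under the hyperderivatives $\mathcal{D}_n$) together with Lemma \ref{congruencemodM} ($P_U^{(1+n)}\equiv(-1)^n\mathcal{D}_n(P_U)\pmod{\mathfrak{M}}$) are for. Without such a statement, ``proving first that coefficients making the identity hold exist'' has no content: there is no a priori reason why $S_1(1;\sigma_U)S_1(1;\sigma_V)-S_1(2;\sigma_\Sigma)$ should land in the $\FF_p$-span of the $S_1(1;\sigma_I)$, and establishing it is where the new technical work of the paper lives.

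The second gap is in the passage from $d=1$ to general $d$. Your description---a ``leading term at degree $d$'' map $a\mapsto a/\theta^{d-1}$, telescoping of the inner sum $\sum_{d>i\ge 0}S_i(1;\sigma_J)$, and a change of variable $\theta\mapsto$ a uniformizer---does not describe a working mechanism, and it is not what Thakur's reduction does. The actual argument partitions $A^+(d)\times A^+(d)\setminus\Delta$ into the finite sets $S_{n,m}=\{(n+\mu m,n+\nu m):\mu,\nu\in\FF_q,\ \mu\ne\nu\}$ with $n\in A^+(d)$, $m\in A^+(<d)$, extends each $\sigma_U$ multiplicatively to $K^\times$, and observes that the sum of $\sigma_U(a)\sigma_V(b)/(ab)$ over a single $S_{n,m}$ equals $\sigma_\Sigma(m)m^{-2}$ times the $d=1$ identity evaluated under $\theta\mapsto n/m$, $t_i\mapsto\chi_{t_i}(n/m)$; the companion partition $S'_{n,m}$ of $A^+(d)\times A^+(<d)$ then reassembles the right-hand side as a sum of $S_d\bigl(\begin{smallmatrix}\sigma_I&\sigma_J\\1&1\end{smallmatrix}\bigr)$. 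This is a purely combinatorial, non-telescoping reduction; you would need to replace your sketch by this partition argument (or an equivalent one) for the $d\geq 1$ step to go through.
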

Theorem \ref{corsumshuffle} easily follows by taking the sum for $d\geq 0$.
The identity is clearly satisfied if $d=0$. We first develop some tools 
involving certain vector spaces generated by multiples of twisted power sums, then
we
show the identity for $d=1$.

\subsubsection{$\FF_p$-subvector spaces of twisted power sums}\label{thespacesvv}
We set $[1]=\theta^q-\theta$.
We note that 
$$\frac{[1]}{\theta-\lambda}=\frac{\theta^q-\theta}{\theta-\lambda}=\frac{\theta^q-\lambda^q+\lambda-\theta}{\theta-\lambda}=(\theta-\lambda)^{q-1}-1,\quad \forall \lambda\in\FF_q.$$
Hence, for all $U\subset\Sigma$, using that $[1]=\prod_{\lambda\in\FF_q}(\theta-\lambda)$,
$$P_U:=[1]S_1(1,\sigma_U)=[1]\sum_{\lambda\in\FF_q}\frac{\prod_{i\in U}(t_i-\lambda)}{\theta-\lambda}=
\sum_{\lambda\in\FF_q}((\theta-\lambda)^{q-1}-1)\prod_{i\in U}(t_i-\lambda)\in A[\underline{t}_U].$$
In fact it is, more precisely, a polynomial of $\FF_p[\theta][\underline{t}_U]$ of degree $\leq q-1$ in $\theta$ (\footnote{In fact, it can be proved that the degree in $\theta$ of $P_{\Sigma}$ is exactly $q-1$ if $|\Sigma|\geq q$. For this, one can apply the formula (\ref{S0}) and the arguments following it. Since this will not be used in this paper, we will not give full details about this.}). The claim on the degree in $\theta$ being clear, we indeed observe that $P_V$ is invariant, by construction,
under the action of $\operatorname{Gal}(\FF_p^{ac}/\FF_p)$.
Let $\mathcal{V}_\Sigma$ be the $\FF_p$-subvector space of $\FF_p[\theta][\underline{t}_\Sigma](<q)$ (a shortcut for polynomials 
of degree $<q$ in $\theta$)
generated by the polynomials $P_U$ with $U\subset\Sigma$. 
We have that $\mathcal{V}_{\Sigma'}\subset \mathcal{V}_\Sigma$ if $\Sigma'\subset\Sigma$.
In particular, $\mathcal{V}_\emptyset=1\cdot\FF_p$. In Lemma \ref{basis}
we will show that the $\FF_p$-vector space $\mathcal{V}_\Sigma$ has dimension $2^{|\Sigma|}$
but we do not need this information right now.

Let $(\mathcal{D}_n)_{n\geq 0}$ be the system of
higher derivatives of $\FF_p[\theta][\underline{t}_\Sigma]$ in $\theta$ which is $\FF_p[\underline{t}_\Sigma]$-linear and such that $\mathcal{D}_n(\theta^m)=\binom{m}{n}\theta^{m-n}$. 
The main result of this subsection is the following:
\begin{Proposition}\label{dstability}
For all $\Sigma$, the space $\mathcal{V}_\Sigma$ is stable under the higher derivations $(\mathcal{D}_n)_{n\geq 0}$.
\end{Proposition}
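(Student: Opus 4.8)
The plan is to reduce the stability of $\mathcal{V}_\Sigma$ to an explicit computation of $\mathcal{D}_n(P_U)$ for each $U\subset\Sigma$, exploiting the closed form $P_U=\sum_{\lambda\in\FF_q}\bigl((\theta-\lambda)^{q-1}-1\bigr)\prod_{i\in U}(t_i-\lambda)$. First I would record the action of the higher derivations on the relevant building blocks: since $(\mathcal{D}_n)_{n\geq 0}$ is the standard Hasse--Schmidt system in $\theta$, we have $\mathcal{D}_n\bigl((\theta-\lambda)^{q-1}\bigr)=\binom{q-1}{n}(\theta-\lambda)^{q-1-n}$, and $\mathcal{D}_n$ kills constants for $n\geq 1$. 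Because $\mathcal{D}_n$ is $\FF_p[\underline{t}_\Sigma]$-linear, it passes through the coefficients $\prod_{i\in U}(t_i-\lambda)$ untouched, so
$$\mathcal{D}_n(P_U)=\binom{q-1}{n}\sum_{\lambda\in\FF_q}(\theta-\lambda)^{q-1-n}\prod_{i\in U}(t_i-\lambda),\qquad 1\le n\le q-1,$$
and $\mathcal{D}_n(P_U)=0$ for $n\geq q$ (the degree in $\theta$ is $\le q-1$), while $\mathcal{D}_0=\mathrm{id}$.

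The crux is therefore to show that, for each $n$ with $1\le n\le q-1$, the polynomial $Q_{U,n}:=\sum_{\lambda\in\FF_q}(\theta-\lambda)^{q-1-n}\prod_{i\in U}(t_i-\lambda)$ again lies in $\mathcal{V}_\Sigma$. I would attack this by expanding $(\theta-\lambda)^{q-1-n}=\sum_{k=0}^{q-1-n}\binom{q-1-n}{k}\theta^{q-1-n-k}(-\lambda)^k$ and interchanging the sums, which rewrites $Q_{U,n}$ as an $\FF_p[\theta]$-linear combination of the "power-moment" polynomials $M_{U,k}:=\sum_{\lambda\in\FF_q}\lambda^k\prod_{i\in U}(t_i-\lambda)$. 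Now I use the arithmetic of $\FF_q$: expanding $\prod_{i\in U}(t_i-\lambda)=\sum_{W\subseteq U}(-1)^{|W|}e_{|W|}(\underline{t}_W)\,\lambda^{|W|}\cdot(\text{sign bookkeeping})$ — more precisely $\prod_{i\in U}(t_i-\lambda)=\sum_{j=0}^{|U|}(-\lambda)^{j}\,e_{|U|-j}(\underline{t}_U)$ up to signs — reduces $M_{U,k}$ to $\FF_p[\underline{t}_U]$-combinations of the sums $\sum_{\lambda\in\FF_q}\lambda^{m}$, which are $0$ unless $(q-1)\mid m$ and $m>0$ (giving $-1$), and $q$ hence $0$ for $m=0$. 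Tracking which monomials $\prod_{i\in W}t_i$ survive, one finds $M_{U,k}$ is, up to an overall $\FF_p^\times$-scalar, the elementary-type polynomial $\sum_{\lambda}\lambda^{\cdot}\prod_{i\in U}(t_i-\lambda)$ picking out exactly those $e_{|W|}(\underline{t}_W)$ with $|W|\equiv -k\pmod{q-1}$; the upshot is that every such combination re-expresses itself in terms of the $P_{W}$ for $W\subseteq U$ (the exponent $q-1$ in the definition of $P_W$ is, modulo $p$, the "universal" one that all lower exponents fold into via $\lambda^{q-1}=1$ on $\FF_q^\times$). Assembling these, $Q_{U,n}$ becomes an $\FF_p[\theta]$-combination — in fact, after checking degrees, an $\FF_p$-combination — of the generators $P_W$, $W\subseteq U\subseteq\Sigma$, hence lies in $\mathcal{V}_\Sigma$.

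The main obstacle I anticipate is the bookkeeping in the second paragraph: correctly matching the $\theta$-powers produced by expanding $(\theta-\lambda)^{q-1-n}$ against the $\theta$-degrees allowed in $\mathcal{V}_\Sigma$ (everything must land in degree $<q$, which is automatic here since $q-1-n<q$, but one must be sure no $\theta^{q-1}$-term with the wrong $t$-coefficient appears), and verifying that the combinations of $M_{U,k}$ that arise are precisely spanned by the $P_W$ rather than by some larger space. A clean way to organize this, which I would adopt, is to introduce the auxiliary polynomials $R_{U,m}:=\sum_{\lambda\in\FF_q}(\theta-\lambda)^{m}\prod_{i\in U}(t_i-\lambda)$ for $0\le m\le q-1$, observe that $R_{U,q-1}=P_U+\bigl(\sum_\lambda\prod_{i\in U}(t_i-\lambda)\bigr)$ with the parenthesized term itself an $\FF_p$-combination of $P_W$'s with $W\subsetneq U$ (by the same $\lambda$-sum vanishing), prove the recursion $\mathcal{D}_n(R_{U,m})=\binom{m}{n}R_{U,m-n}$ directly, and then show by downward induction on $m$ that every $R_{U,m}$ with $|U|-m\not\equiv 0$... lies in $\mathcal{V}_\Sigma$ — reducing the whole proposition to the single closure statement "$\mathcal{V}_\Sigma$ contains all $R_{U,m}$, $U\subseteq\Sigma$, $0\le m\le q-1$," which is a finite linear-algebra fact over $\FF_p$ that can be settled by the same elementary-symmetric expansion.
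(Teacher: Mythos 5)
Your opening observation is correct and genuinely differs from the paper's starting point: since the Hasse derivative commutes with the translation $\theta\mapsto\theta-\lambda$ and kills constants, you do get the clean closed form
\[
\mathcal{D}_n(P_U)=\binom{q-1}{n}\sum_{\lambda\in\FF_q}(\theta-\lambda)^{\,q-1-n}\prod_{i\in U}(t_i-\lambda),
\qquad 1\le n\le q-1,
\]
with $\binom{q-1}{n}\equiv(-1)^n\pmod p$ by Lucas. The paper never writes this down; it instead expands $B_1(\sigma_\Sigma)=\prod_{i\in\Sigma}(t_i-\theta)$ in powers of $[1]=\theta^q-\theta$, exploits the combinatorially transparent Leibniz identity $\mathcal{D}_n(B_1(\sigma_\Sigma))=(-1)^n\sum_{|\Sigma\setminus W|=n}B_1(\sigma_W)$, and runs a double induction over the filtration $\mathcal{V}_\Sigma^{(m)}$. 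So you have found a different computational handle, and it is worth pursuing.

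However, the argument as written has a genuine gap exactly where it matters. Everything after the displayed formula amounts to restating, not proving, that
\[
R_{U,m}:=\sum_{\lambda\in\FF_q}(\theta-\lambda)^{m}\prod_{i\in U}(t_i-\lambda)\in\mathcal{V}_\Sigma,
\qquad 0\le m\le q-1,\ U\subseteq\Sigma.
\]
Expanding $(\theta-\lambda)^{m}$ binomially does express $R_{U,m}$ as an $\FF_p[\theta]$-linear combination of the $\theta$-free moments $M_{U,k}=\sum_\lambda\lambda^k\prod_{i\in U}(t_i-\lambda)$, and these $M_{U,k}$ are in turn $\FF_p$-combinations of elementary-symmetric monomials (incidentally the congruence should read $|U\setminus W|\equiv -k\pmod{q-1}$, not $|W|\equiv -k$). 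But an arbitrary $\FF_p[\theta]$-combination of the $M_{U,k}$ lives in the large space $\FF_p[\theta][\underline{t}_\Sigma](<q)$, whereas $\mathcal{V}_\Sigma$ is only $2^{|\Sigma|}$-dimensional over $\FF_p$ (Lemma \ref{basis}). Deciding that the \emph{particular} combination equalling $R_{U,m}$ falls into that small subspace is precisely the content of the proposition, and calling it ``a finite linear-algebra fact'' does not discharge it. Your suggested downward induction is also circular: the recursion $\mathcal{D}_n(R_{U,m})=\binom{m}{n}R_{U,m-n}$ only propagates membership in $\mathcal{V}_\Sigma$ from larger $m$ to smaller $m$ \emph{once} stability of $\mathcal{V}_\Sigma$ under $\mathcal{D}_n$ is known (which is what you are trying to prove), and the proposed base case $R_{U,q-1}=P_U+R_{U,0}$ requires $R_{U,0}\in\mathcal{V}_\Sigma$, which is the opposite endpoint of the same induction and is not obvious --- indeed $R_{U,0}=S_1(0,\sigma_U)$ has no $\theta$ at all, while the $P_W$ with $|W|\ge q$ have genuine $\theta$-degree up to $q-1$, so the cancellation needed to land in $\mathcal{V}_\Sigma$ is nontrivial. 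The paper's nested-induction mechanism (Proposition \ref{reinforceddstability}), together with the filtration by $\mathcal{V}_\Sigma^{(m)}$, exists precisely to supply what is missing here: a way to bootstrap from strictly smaller $\Sigma'$ so that the required containments are available before they are invoked. To make your route work you would need a comparable anchor --- for instance, a direct, unconditional proof that $R_{U,0}\in\mathcal{V}_{U}$ for all $U$ --- and that is itself nontrivial.
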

The proof of this proposition occupies the rest of this subsection. We note, since 
$\mathcal{V}_\Sigma\subset \FF_p[\theta][\underline{t}_\Sigma](<q)$, that it suffices to show that
$$\mathcal{D}_n(\mathcal{V}_\Sigma)\subset \mathcal{V}_\Sigma,\quad n=1,\ldots,q-1.$$ 
We define, for all $n\geq 0$:
$$\mathcal{V}_\Sigma^{(n)}=\operatorname{Vect}_{\FF_p}(\mathcal{V}_U:|\Sigma\setminus U|\geq n).$$
Hence, we have $$\mathcal{V}_\Sigma=\mathcal{V}_\Sigma^{(0)}\supset
\mathcal{V}_\Sigma^{(1)}\supset\cdots\supset \mathcal{V}_\Sigma^{(|\Sigma|)}=\mathcal{V}_\emptyset=\FF_p\cdot1.$$
By convention, we set $\mathcal{V}_\Sigma^{(n)}=\{0\}$ if $n>|\Sigma|$.
We will make use of the polynomials $B_1(\sigma_U):=\prod_{i\in U}(t_i-\theta)$, $U\subset \Sigma$. We note
that for any $U\subset \Sigma$ such that $Nq<|U|<(N+1)q$ there exist polynomials
$a_0^U,\ldots,a_N^U\in \FF_p[\theta][\underline{t}_\Sigma](<q)$, uniquely determined, 
such that
\begin{equation}\label{sumdec}
B_1(\sigma_U)=a_0^U+a_1^U[1]+\cdots+a_N^U[1]^N.
\end{equation}
Also, we need the next Lemma:
\begin{Lemma}\label{derivativesbracket1}
For all $n\geq 1$ and $m\geq 0$, 
$\mathcal{D}_n([1]^m)$ is a polynomial of $\FF_p[\theta^q-\theta]$
of degree $\leq \min\{m-\frac{n}{q},m-1\}$ in $[1]=\theta^q-\theta$.
\end{Lemma}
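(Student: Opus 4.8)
The plan is to prove Lemma~\ref{derivativesbracket1} by a direct computation using the higher derivations $\mathcal{D}_n$ applied to the single polynomial $[1]=\theta^q-\theta$. First I would record the base cases: since $\mathcal{D}_n$ is $\FF_p[\underline{t}_\Sigma]$-linear and $\mathcal{D}_n(\theta^j)=\binom{j}{n}\theta^{j-n}$, we get $\mathcal{D}_1([1])=\theta^{q-1}\cdot q - 1 = -1$ in characteristic $p$, hence $\mathcal{D}_1([1])=-1\in\FF_p[[1]]$, and $\mathcal{D}_n([1])=\binom{q}{n}\theta^{q-n}$, which vanishes for $2\le n\le q-1$ because $p\mid\binom{q}{n}$, while $\mathcal{D}_q([1])=\theta^0=1$ and $\mathcal{D}_n([1])=0$ for $n>q$. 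In every case $\mathcal{D}_n([1])\in\FF_p[[1]]$ with the claimed degree bound (for $m=1$: degree $\le\min\{1-\tfrac{n}{q},0\}$, which for $n\ge 1$ is $0$, and for $n\ge q+1$ the polynomial is $0$ of degree $-\infty$, so nothing to check).

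Next I would run the induction on $m$, using the Leibniz rule for higher derivations: $\mathcal{D}_n(fg)=\sum_{a+b=n}\mathcal{D}_a(f)\mathcal{D}_b(g)$. Writing $[1]^m=[1]\cdot[1]^{m-1}$, we obtain
\[
\mathcal{D}_n([1]^m)=\sum_{a+b=n}\mathcal{D}_a([1])\,\mathcal{D}_b([1]^{m-1}).
\]
By the base case only the terms $a\in\{0,1,q\}$ survive (plus $a>q$ only if $a=q$ since $\binom{q}{a}\equiv 0$ otherwise). So
\[
\mathcal{D}_n([1]^m)=[1]\,\mathcal{D}_n([1]^{m-1}) - \mathcal{D}_{n-1}([1]^{m-1}) + \mathcal{D}_{n-q}([1]^{m-1}),
\]
with the convention that $\mathcal{D}_j=0$ for $j<0$. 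By the inductive hypothesis each of the three terms lies in $\FF_p[[1]]$, so the sum does too; the only real content is the degree bound. The first term has $[1]$-degree $\le 1+\min\{(m-1)-\tfrac{n}{q},m-2\}=\min\{m-\tfrac{n}{q},m-1\}$, exactly what we want. The second term has degree $\le\min\{(m-1)-\tfrac{n-1}{q},m-2\}\le m-1$ and also $\le (m-1)-\tfrac{n-1}{q}\le m-\tfrac{n}{q}$ once one checks $\tfrac{n-1}{q}\ge \tfrac{n}{q}-1$, i.e. $q\ge 1$, which is fine. The third term has degree $\le\min\{(m-1)-\tfrac{n-q}{q},m-2\}=\min\{m-\tfrac{n}{q},m-2\}$, again within the bound. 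Taking the maximum over the three gives $\mathcal{D}_n([1]^m)$ of degree $\le\min\{m-\tfrac{n}{q},m-1\}$ in $[1]$, closing the induction.

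The main obstacle I anticipate is purely bookkeeping: making sure the degree estimate $m-\tfrac{n}{q}$ — which is generally not an integer, so should be read as "degree $\le\lfloor m-\tfrac{n}{q}\rfloor$", or equivalently "$[1]^m$ appears with coefficient $0$ and more" — is propagated correctly through the three terms of the Leibniz expansion, in particular handling the boundary cases $n\equiv 0\pmod q$ (where $\tfrac nq$ is an integer and the bound is sharp) versus $n\not\equiv 0\pmod q$. One should also be slightly careful that when $n>mq$ the polynomial $\mathcal{D}_n([1]^m)$ is actually zero (its degree in $\theta$ would be negative), which is consistent with the stated bound since then $m-\tfrac nq<0$; this can be folded into the induction by noting $\mathcal{D}_{n-1}([1]^{m-1})$ and the other terms all vanish in that range. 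No deep idea is needed beyond the Leibniz formula and the vanishing of $\binom{q}{a}$ mod $p$ for $0<a<q$.
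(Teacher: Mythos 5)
Your proof takes the same route as the paper's: compute $\mathcal{D}_n([1])$ directly for $m=1$ (nonzero only for $n\in\{1,q\}$, giving $-1$ and $1$ respectively, since $\binom{q}{n}\equiv 0\pmod p$ for $0<n<q$) and then induct on $m$ via the Leibniz rule; the paper merely states that the induction works while you supply the three-term recursion
$\mathcal{D}_n([1]^m)=[1]\mathcal{D}_n([1]^{m-1})-\mathcal{D}_{n-1}([1]^{m-1})+\mathcal{D}_{n-q}([1]^{m-1})$.

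One small slip worth fixing: in bounding the second and third terms you apply the inductive hypothesis to $\mathcal{D}_{n-1}$ and $\mathcal{D}_{n-q}$, but the hypothesis only covers indices $\ge 1$. When $n=1$ (second term) or $n=q$ (third term) the operator is $\mathcal{D}_0=\mathrm{id}$ and the term is $[1]^{m-1}$, of $[1]$-degree exactly $m-1$, not $\le m-2$ as your formula $\min\{m-\tfrac{n}{q},m-2\}$ would give for $n=q$. Fortunately this does not break the argument, since the target bound $\min\{m-\tfrac nq,m-1\}$ equals $m-1$ precisely in those two cases, so the conclusion still holds; you should just treat $\mathcal{D}_0$ separately rather than folding it into the inductive estimate.
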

\begin{proof}
We have $\mathcal{D}_1([1])=-1$, $\mathcal{D}_q([1])=1$ and 
$\mathcal{D}_n([1])=0$ for all $n\geq 1$ with $n\not\in\{1,q\}$.
The statement is thus clear for $m=1$ and the proof can now be obtained by induction 
on $m\geq 1$.
\end{proof}
We note that, for all $n\geq 1$,
\begin{equation}\label{proddec}
\mathcal{D}_n(B_1(\sigma_\Sigma))=(-1)^n\sum_{W\subset\Sigma\atop |\Sigma\setminus W|=n}B_1(\sigma_W).
\end{equation}
This follows from Leibnitz's formula (we set, for simplicity, $\Sigma=\{1,\ldots,s\}$):
\begin{eqnarray*}
\lefteqn{\mathcal{D}_n(B_1(\sigma_\Sigma))=\mathcal{D}_n(\prod_{i\in\Sigma}(t_i-\theta))=}\\
&=&\sum_{i_1+\cdots+i_s=n}\mathcal{D}_{i_1}((t_1-\theta))\cdots \mathcal{D}_{i_s}((t_s-\theta))\\
&=&\sum_{i_1+\cdots+i_s=n\atop 0\leq i_j\leq 1,j=1,\ldots,s}\mathcal{D}_{i_1}((t_1-\theta))\cdots \mathcal{D}_{i_s}((t_s-\theta))\\
&=&(-1)^n\sum_{W\subset\Sigma\atop |\Sigma\setminus W|=n}\prod_{k\in W}(t_k-\theta).\end{eqnarray*}
Our Proposition \ref{dstability} is a direct consequence of the next reinforced statement:
\begin{Proposition}\label{reinforceddstability}
For all $U\subset\Sigma$ and $n,m\geq 0$, we have $\mathcal{D}_n(\mathcal{V}_U^{(m)})\subset
\mathcal{V}_U^{(m+n)}$.
Moreover, if $N=\lfloor \frac{|\Sigma|}{q}\rfloor$, we have 
$a_i^{\Sigma}\in \mathcal{V}^{(qi)}_\Sigma$ for all $i=0,\ldots,N$, where the polynomials $a_i^\Sigma$
are those of the expansion (\ref{sumdec}).
\end{Proposition}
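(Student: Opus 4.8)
The plan is to prove Proposition~\ref{reinforceddstability} by induction on $|\Sigma|$, establishing its two assertions together (Proposition~\ref{dstability} being then the case $m=0$ of the first one). The backbone is the identity, valid for every $U\subseteq\Sigma$,
$$a_0^U=-P_U,\qquad\text{equivalently}\qquad B_1(\sigma_U)\equiv -P_U\pmod{[1]}.$$
Indeed $P_U=[1]S_1(1,\sigma_U)=\sum_{\lambda\in\FF_q}\bigl(\prod_{\mu\neq\lambda}(\theta-\mu)\bigr)\prod_{i\in U}(t_i-\lambda)$ has degree $<q$ in $\theta$, and evaluating at $\nu\in\FF_q$, using $\prod_{\mu\neq\lambda}(\nu-\mu)=0$ for $\nu\neq\lambda$ and $\prod_{\mu\neq\lambda}(\lambda-\mu)=\prod_{c\in\FF_q^*}c=-1$, gives $P_U(\nu)=-\prod_{i\in U}(t_i-\nu)=-B_1(\sigma_U)(\nu)$; the claim follows since $[1]=\prod_{\mu\in\FF_q}(\theta-\mu)$. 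In particular $a_0^U=-P_U\in\mathcal{V}_U$, and $B_1(\sigma_U)=-P_U$ whenever $|U|<q$. This settles the base case $|\Sigma|<q$: there $N=0$, so the second assertion reads $a_0^\Sigma=-P_\Sigma\in\mathcal{V}_\Sigma$, while for the first one every generator $P_W$ of $\mathcal{V}_U^{(m)}$ has $|W|<q$, hence by (\ref{proddec}), $\mathcal{D}_n(P_W)=-\mathcal{D}_n(B_1(\sigma_W))=(-1)^n\sum_{|W\setminus W'|=n}P_{W'}\in\mathcal{V}_U^{(m+n)}$.

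For the inductive step ($|\Sigma|=s\geq q$) I would argue in three stages, using that $\mathcal{V}_\Sigma^{(n)}=\operatorname{Vect}_{\FF_p}(P_W:|\Sigma\setminus W|\geq n)$ and $\mathcal{V}_{\Sigma\setminus\{j\}}^{(k)}\subseteq\mathcal{V}_\Sigma^{(k+1)}$. \emph{Stage 1} (first assertion for $\mathcal{V}_\Sigma^{(m)}$, $m\geq1$): a generator $P_W$ lies in $\mathcal{V}_{\Sigma\setminus\{j\}}^{(m-1)}$ for $j\in\Sigma\setminus W$, so the inductive hypothesis gives $\mathcal{D}_n(P_W)\in\mathcal{V}_{\Sigma\setminus\{j\}}^{(m-1+n)}\subseteq\mathcal{V}_\Sigma^{(m+n)}$; in particular $\mathcal{D}_n(\mathcal{V}_\Sigma^{(qi)})\subseteq\mathcal{V}_\Sigma^{(qi+n)}$ for $i\geq1$. \emph{Stage 2} (second assertion, by descending induction on $i$ from $N$ down to $1$, the case $i=0$ being the identity above): apply $\mathcal{D}_{qi}$ to (\ref{sumdec}); the terms of index $<i$ vanish for degree reasons. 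From the generating series $\bigl([1]+(X^q-X)\bigr)^i$ (which sharpens Lemma~\ref{derivativesbracket1}) one gets $\mathcal{D}_{qi}([1]^i)=1$, $\mathcal{D}_{qi-l}([1]^i)=0$ for $1\leq l\leq q-2$, $\mathcal{D}_{qi-(q-1)}([1]^i)=-i$, so the index-$i$ term is exactly $a_i^\Sigma-i\,\mathcal{D}_{q-1}(a_i^\Sigma)$, of $\theta$-degree $<q$; by (\ref{proddec}), $\mathcal{D}_{qi}(B_1(\sigma_\Sigma))=(-1)^i\sum_{|W|=s-qi}B_1(\sigma_W)$. Reducing both sides modulo $[1]$: each index-$i'$ term ($i'>i$) becomes an $\FF_p$-combination of the $\mathcal{D}_l(a_{i'}^\Sigma)\in\mathcal{V}_\Sigma^{(qi'+l)}\subseteq\mathcal{V}_\Sigma^{(qi)}$ (descending hypothesis and Stage 1), while $\mathcal{D}_{qi}(B_1(\sigma_\Sigma))$ reduces to $(-1)^i\sum_{|W|=s-qi}a_0^W=-(-1)^i\sum_{|\Sigma\setminus W|=qi}P_W\in\mathcal{V}_\Sigma^{(qi)}$. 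Hence $a_i^\Sigma-i\,\mathcal{D}_{q-1}(a_i^\Sigma)\in\mathcal{V}_\Sigma^{(qi)}$; since $\mathcal{D}_{q-1}(a_i^\Sigma)$ has $\theta$-degree $0$ and $\mathcal{D}_{q-1}\mathcal{D}_l=\binom{q-1+l}{l}\mathcal{D}_{q-1+l}$ kills polynomials of $\theta$-degree $<q$ for $l\geq1$, applying $\mathcal{D}_{q-1}$ together with Stage 1 shows $\mathcal{D}_{q-1}(a_i^\Sigma)\in\mathcal{V}_\Sigma^{(qi)}$ as well, whence $a_i^\Sigma\in\mathcal{V}_\Sigma^{(qi)}$.

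\emph{Stage 3} completes the first assertion for $m=0$: by Stage 1 only $\mathcal{D}_n(P_\Sigma)\in\mathcal{V}_\Sigma^{(n)}$ remains. Writing $P_\Sigma=-a_0^\Sigma=-B_1(\sigma_\Sigma)+\sum_{i\geq1}a_i^\Sigma[1]^i$, one has $\mathcal{D}_n(P_\Sigma)=-\mathcal{D}_n(B_1(\sigma_\Sigma))+\sum_{i\geq1}\mathcal{D}_n(a_i^\Sigma[1]^i)$; by (\ref{proddec}) and $a_0^W=-P_W$ applied to each $W$ with $|\Sigma\setminus W|=n$, $-\mathcal{D}_n(B_1(\sigma_\Sigma))$ equals $(-1)^n\sum_{|\Sigma\setminus W|=n}P_W$ plus $[1]$-divisible terms. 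Isolating the component of $\theta$-degree $<q$ on both sides (as in Stage 2) cancels all $[1]$-divisible contributions and leaves $(-1)^n\sum_{|\Sigma\setminus W|=n}P_W\in\mathcal{V}_\Sigma^{(n)}$ together with $\FF_p$-combinations of the $\mathcal{D}_l(a_i^\Sigma)$, $i\geq1$, $0\leq l\leq q-1$, each of which lies in $\mathcal{V}_\Sigma^{(qi+l)}\subseteq\mathcal{V}_\Sigma^{(n)}$ by Stages 2 and 1.

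The main obstacle — the point where, as the authors say, the $d=1$ case ``presents new structures'' — is Stages 2 and 3: one must follow the polynomials $a_i^\Sigma$ through the $[1]$-adic expansion while handling the correction $-i\,\mathcal{D}_{q-1}(a_i^\Sigma)$ arising from $\mathcal{D}_{qi}$ not annihilating $[1]^i$ exactly, and one must check that the a priori high $\theta$-degree contributions of the terms $\mathcal{D}_{qi}(a_{i'}^\Sigma[1]^{i'})$, $i'>i$, do cancel, so that the surviving degree-$<q$ residue falls into $\mathcal{V}_\Sigma^{(qi)}$. The exact shape of $\mathcal{D}_n([1]^m)$ read off its generating series (refining Lemma~\ref{derivativesbracket1}) and the Leibniz identity (\ref{proddec}) are precisely what makes this bookkeeping effective.
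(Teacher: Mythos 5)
Your proof is correct and follows the same two-level induction as the paper: induction on $|\Sigma|$, a descending induction on the $[1]$-adic coefficients $a_m^\Sigma$, the Leibniz identities (\ref{proddec})/(\ref{sumdec}), and the trick of applying $\mathcal{D}_{q-1}$ twice to kill the residual $\mathcal{D}_{q-1}(a_m^\Sigma)$ term. You have in fact tidied up two small inaccuracies in the published proof — the identity should read $a_0^U=-P_U$ rather than $P_U$, and the coefficient in (\ref{stepmq}) is $-m\,\mathcal{D}_{q-1}(a_m^\Sigma)$ rather than $-\mathcal{D}_{q-1}(a_m^\Sigma)$, as your generating-series computation $[1]^i(\theta+X)=([1]+X^q-X)^i$ shows — neither of which affects the validity of the argument since one only needs the coefficient to lie in $\FF_p$ and the trick to annihilate it.
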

\begin{proof}
We proceed by induction on $s:=|\Sigma|$. In fact, the proof makes use of two nested induction
processes; they are  not complicated, but in order to avoid confusion, we shall refer to the {\em first induction hypothesis}
and to the {\em second induction hypothesis}.
The statement is satisfied for $s=0,1,\ldots,q-1$.
Indeed, in this case, we have $B_1(\sigma_\Sigma)=a_0^\Sigma$ and we know that $B_1(\sigma_\Sigma)=P_\Sigma$. The formula (\ref{proddec}) then implies that $\mathcal{V}_\Sigma$, 
and hence $\mathcal{V}_U$ for all $U\subset\Sigma$, are stable under the operators $\mathcal{D}_1,\ldots,\mathcal{D}_{q-1}$. This implies that $\mathcal{D}_n(\mathcal{V}_U^{(m)})\subset
\mathcal{V}_U^{(m+n)}$ for all $U\subset\Sigma$ and $n,m\geq 0$.

We now suppose that $s\geq q$, so that $N=\lfloor\frac{s}{q}\rfloor\geq 1$. 
We suppose that the statement is satisfied for all $\Sigma'\subsetneq\Sigma$ (this is our first 
induction hypothesis). Let $m$ be an integer 
between $1$ and $N$. We easily verify, by using Leibnitz's formula and using Lemma
\ref{derivativesbracket1}, that
\begin{equation}\label{stepmq}
\mathcal{D}_{mq}(B_1(\sigma_\Sigma))=a_m^\Sigma-\mathcal{D}_{q-1}(a_m^{\Sigma})+
c_0^{\langle m\rangle}+c_1^{\langle m\rangle}[1]+\cdots+c_{N-m}^{\langle m\rangle}[1]^{N-m},
\end{equation}
where, for all $j\in\{0,\ldots,N-m\}$, $c_j^{\langle m\rangle}\in\FF_p[\theta][\underline{t}_\Sigma](<q)$
is an $\FF_p$-linear combination of the polynomials $$a_k^\Sigma,\mathcal{D}_1(a_k^\Sigma),\ldots,\mathcal{D}_{q-1}(a_k^\Sigma),\quad k=m+1,\ldots,N,$$ which are in $\FF_p[\theta][\underline{t}_\Sigma](<q)$.
We also note, again for $m=1,\ldots,N$, by using (\ref{proddec}), that
\begin{equation}\label{stepmqbis}
\mathcal{D}_{mq}(B_1(\sigma_\Sigma))=\sum_{W\subset\Sigma\atop |\Sigma\setminus W|=mq}B_1(\sigma_W),
\end{equation}
and $B_1(\sigma_W)=a_0^W+a_1^W[1]+\cdots+a_{N-m}^W[1]^{N-m}$ by using (\ref{sumdec}).
Since $m>0$, by the (first) induction hypothesis we have that $a_i^W\in\mathcal{V}_W^{(qi)}$ and we can write,
equating the coefficients of $[1]^j$ for all $j$ in (\ref{stepmq}) and (\ref{stepmqbis})
and extracting the constant term,
that
\begin{equation}\label{steptermam}a_m^\Sigma-\mathcal{D}_{q-1}(a_m^{\Sigma})+
c_0^{\langle m\rangle}\in\mathcal{V}_\Sigma^{(mq)}.\end{equation}
The next step is to prove, by induction on $m=N-g$, $g=0,\ldots,N-1$,
that $a_m^\Sigma=a_{N-g}^\Sigma\in\mathcal{V}_\Sigma^{(mq)}$ (this is our second induction process).

We suppose that $m=N$, so that $g=0$. Then, in (\ref{steptermam}), we see that 
$c_0^{\langle N\rangle}=0$. Hence, $a_N^\Sigma-\mathcal{D}_{q-1}(a_N^{\Sigma})\in\mathcal{V}_\Sigma^{(Nq)}$. If $\deg_\theta(a_N^\Sigma)<q-1$
we are done, as in this case, $\mathcal{D}_{q-1}(a_N^{\Sigma})=0$. Otherwise, note that
$\mathcal{D}_{q-1}(\mathcal{D}_{q-1}(a_N^{\Sigma}))=0$. Since
$N>0$, the (first) induction hypothesis implies that $\mathcal{V}_\Sigma^{(Nq)}$ is $\mathcal{D}_{q-1}$-stable
(observe that this space is $\FF_p$-spanned by subspaces $\mathcal{V}_W$ with $W\subsetneq\Sigma$
which are $\mathcal{D}_{n}$-stable for all $n=1,\ldots,q-1$ by the first induction hypothesis, as the various $W$
are such that $|W|<s$). Applying $\mathcal{D}_{q-1}$ to $a_N^\Sigma-\mathcal{D}_{q-1}(a_N^{\Sigma})\in\mathcal{V}_\Sigma^{(Nq)}$ we obtain that $\mathcal{D}_{q-1}(a_N^{\Sigma})\in\mathcal{V}_\Sigma^{(Nq)}$ and summing we get that $a_N^{\Sigma}\in\mathcal{V}_\Sigma^{(Nq)}$ as desired.

The second inductive process is similar (we use the same trick of applying $\mathcal{D}_{q-1}$ as above).
We suppose by (second) induction hypothesis that $a_{m+1}^\Sigma\in\mathcal{V}_\Sigma^{((m+1)q)},\ldots,a_{N}^\Sigma\in\mathcal{V}_\Sigma^{(Nq)}$, so that $a_{m+1}^\Sigma,\ldots,a_{N}^\Sigma\in\mathcal{V}_\Sigma^{((m+1)q)}$
(and we have, by the first induction hypothesis, that $\mathcal{V}_\Sigma^{((m+1)q)}$
is $(\mathcal{D}_{1},\ldots,\mathcal{D}_{q-1})$-stable). In (\ref{steptermam}), we observe that
$c_0^{\langle m\rangle}\in \mathcal{V}_\Sigma^{((m+1)q)}$. Therefore
$a_m^\Sigma-\mathcal{D}_{q-1}(a_m^{\Sigma})\in\mathcal{V}_\Sigma^{(mq)}$ so that, the 
same trick as above yields that $a_m^\Sigma\in\mathcal{V}_\Sigma^{(mq)}$ and this,
for all $m=1,\ldots,N$. For $m=0$ we have observed that $a_0^\Sigma=P_\Sigma$ so our property 
that $a_m^\Sigma\in\mathcal{V}_\Sigma^{(mq)}$ for $m=0,\ldots,N$ is completely checked.

The last step of the proof is to show that $\mathcal{V}_\Sigma$ is $(\mathcal{D}_1,\ldots,\mathcal{D}_{q-1})$-stable and that $\mathcal{D}_n(\mathcal{V}_\Sigma^{(m)})\subset\mathcal{V}_\Sigma^{(m+n)}$. All we need to show, thanks to the first induction hypothesis, is that 
$\mathcal{D}_n(a_0^\Sigma)\in\mathcal{V}_\Sigma^{(n)}$ for $n=1,\ldots,q-1$.
Let $k$ be an integer between $1$ and $q-1$.
We have, by using (\ref{sumdec}) and Lemma \ref{derivativesbracket1}:
\begin{eqnarray*}
\mathcal{D}_k(B_1(\sigma_\Sigma))&=&\mathcal{D}_k(a_0^\Sigma)+\mathcal{D}_k(a_1^\Sigma[1]+\cdots+a_N^\Sigma[1]^N)\\
&=&\mathcal{D}_k(a_0^\Sigma)+e_0^{\langle k\rangle}+e_1^{\langle k\rangle}[1]+\cdots+e_{N-1}^{\langle k\rangle}[1]^{N-1},
\end{eqnarray*}
where the elements $e_j^{\langle k\rangle}$ are polynomials of $\FF_p[\theta][\underline{t}_\Sigma]$ which are $\FF_p$-linear combinations of elements $\mathcal{D}_j(a_l^\Sigma)$ with $j=0,\ldots,q-1$
and $l=1,\ldots,N$. Since these elements are in $\mathcal{V}_\Sigma^{(q)}$ by what we have seen 
above, and since this latter space is $\mathcal{D}_j$-stable for $j=1,\ldots,q-1$ by the first induction hypothesis,
we obtain in particular that $e_0^{\langle k\rangle}\in\mathcal{V}_\Sigma^{(q)}$.
Combining with (\ref{proddec}) and comparing the coefficients of $[1]^l$ for all $l$, we
deduce that $\mathcal{D}_k(a_0^\Sigma)\in\mathcal{V}_\Sigma^{(k)}\subset\mathcal{V}_\Sigma$.
Any element $x$ of $\mathcal{V}_\Sigma$ is a combination $\sum_ix_i$ with $x_i\in \mathcal{V}_{U_i}$.
Hence, $\mathcal{D}_n(\mathcal{V}_\Sigma)\subset\mathcal{V}_\Sigma^{(n)}$ for all $n$.
Let now $x$ be an element of $\mathcal{V}_\Sigma^{(r)}$. Then, $x=\sum_ix_i$ with $x_i\in\mathcal{V}_{U_i}$ and $|\Sigma\setminus U_i|\geq r$, so that $\mathcal{D}_n(x_i)\in\mathcal{V}_{U_i}^{(n)}\subset
\mathcal{V}_\Sigma^{(n+r)}$. We deduce that $\mathcal{D}_n(\mathcal{V}_\Sigma^{(r)})\subset\mathcal{V}_\Sigma^{(n+r)}$.
\end{proof}
We deduce, from the above proof, that for all $U\subset\Sigma$,
$$D_1(P_U)=-\sum_{i\in U}P_{U\setminus\{i\}}+Q,\quad \exists Q\in\mathcal{V}_U^{(2)}.$$

\subsubsection{The case of $d=1$ in the Theorem \ref{simplesumshuffle}: existence of certain coefficients $f_{I,J}$}
We have that ($\Delta$ designates the diagonal subset):
\begin{eqnarray*}
\lefteqn{P_{U,V}:=[1](S_1(1,\sigma_U)S_1(1,\sigma_V)-S_1(2,\sigma_\Sigma))=}\\
&=&[1]\left(\sum_{\lambda,\mu\in\FF_q}\frac{\prod_{i\in U}(t_i-\lambda)}{\theta-\lambda}
\frac{\prod_{j\in V}(t_j-\mu)}{\theta-\mu}-\sum_{\nu\in\FF_q}\frac{\prod_{k\in \Sigma}(t_k-\nu)}{(\theta-\nu)^2}\right)\\
&=&[1]\sum_{(\lambda,\mu)\in\FF_q^2\setminus\Delta}\frac{\prod_{i\in U}(t_i-\lambda)\prod_{j\in V}(t_j-\mu)}{(\theta-\lambda)(\theta-\mu)}\\
&=&
\sum_{(\lambda,\mu)\in\FF_q^2\setminus\Delta}\left(\prod_{i\in U}(t_i-\lambda)\prod_{j\in V}(t_j-\mu)\right)\prod_{\nu\not\in\{\mu,\lambda\}}(\theta-\nu)\in \FF_p[\theta][\underline{t}_\Sigma],\end{eqnarray*}
which is, in particular, a polynomial of degree $\leq q-2$ in $\theta$ (again, we use 
$\operatorname{Gal}(\FF_p^{ac}/\FF_p)$-invariance, which is easily checked, to prove that the coefficients are in $\FF_p$).
We now compute:
\begin{eqnarray*}
\lefteqn{P_{U,V}=[1]\sum_{(\lambda,\mu)\in\FF_q^2\setminus\Delta}\frac{\prod_{i\in U}(t_i-\lambda)\prod_{j\in V}(t_j-\mu)}{(\theta-\lambda)(\theta-\mu)}=}\\
&=&[1]\sum_{\lambda\in\FF_q}\frac{\prod_{i\in U}(t_i-\lambda)}{\theta-\lambda}\sum_{\mu\in\FF_q\setminus\{\lambda\}}\frac{\prod_{j\in V}(t_j-\lambda+\lambda-\mu)}{(\theta-\lambda+\underbrace{\lambda-\mu}_{=:\eta})}\\
&=&[1]\sum_{\lambda\in\FF_q}\frac{\prod_{i\in U}(t_i-\lambda)}{\theta-\lambda}\sum_{\eta\in\FF_q^\times}\frac{\prod_{j\in V}(t_j-\lambda+\eta)}{(\theta-\lambda+\eta)}\\
&=&[1]\sum_{\lambda\in\FF_q}\frac{\prod_{i\in U}(t_i-\lambda)}{\theta-\lambda}\sum_{\eta\in\FF_q^\times}\sum_{M\sqcup W=V}\frac{\prod_{j\in W}(t_j-\lambda)\eta^{|M|}}{(\theta-\lambda)(1+\frac{\eta}{\theta-\lambda})}.
\end{eqnarray*}
Hence, by developing $\frac{1}{1+\frac{\eta}{\theta-\lambda}}$ in $\FF_p((\frac{1}{\theta-\lambda}))$:
\begin{eqnarray*}
P_{U,V}&=&[1]\sum_{\lambda\in\FF_q}\frac{\prod_{i\in U}(t_i-\lambda)}{(\theta-\lambda)^2}\sum_{\eta\in\FF_q^\times}\sum_{M\sqcup W=V}\prod_{j\in W}(t_j-\lambda)\eta^{|M|}\sum_{n\geq 0}(-1)^n\frac{\eta^n}{(\theta-\lambda)^n}\\
&=&[1]\sum_{\lambda\in\FF_q}\prod_{i\in U}(t_i-\lambda)\sum_{M\sqcup W=V}\prod_{j\in W}(t_j-\lambda)\sum_{n\geq 0}\frac{(-1)^n}{(\theta-\lambda)^{n+2}}\sum_{\eta\in\FF_q^\times}\eta^{|M|+n}\\
&=&-[1]\sum_{\begin{smallmatrix} M\sqcup W=V\\ |M|+n> 0\\ |M|+n\equiv0\pmod{q-1}\end{smallmatrix}}(-1)^nS_{1}(n+2,\sigma_{U\sqcup W}).\end{eqnarray*}
We observe that, if $n+2\geq q+1$, then
$\|[1]S_1(n+2,\sigma_U)\|\leq q^{-1}.$ Since $P_{U,V}\in \FF_p[\theta][\underline{t}_\Sigma](<q-1)$,
we thus see that the part of the sum on the right for which $n\geq q-1$ is in the 
maximal ideal $\mathfrak{M}$ of $\FF_p[\underline{t}_\Sigma][[1/\theta]]$. More precisely:
\begin{equation}\label{anotherintermediatestep}
P_{U,V}\equiv -[1]\sum_{\begin{smallmatrix} M\sqcup W=V\\ |M|+n> 0,n<q-1\\ |M|+n\equiv0\pmod{q-1}\end{smallmatrix}}(-1)^nS_{1}(n+2,\sigma_{U\sqcup W})\pmod{\mathfrak{M}}.
\end{equation}
We set, for $U\subset\Sigma$:
$$P_U^{(k)}:=[1]S_1(k,\sigma_U),\quad k\geq 1,$$
so that $P_U^{(1)}=P_U$ in the previous notations.
\begin{Lemma}\label{congruencemodM}
We have the congruence
$$P_U^{(1+n)}\equiv(-1)^n\mathcal{D}_n(P_U)\pmod{\mathfrak{M}},\quad n=0,\ldots,q-1.$$
\end{Lemma}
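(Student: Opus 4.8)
The plan is to show that the two sides of the asserted congruence agree term by term after expanding $P_U = [1]S_1(1,\sigma_U)$ as the explicit polynomial $\sum_{\lambda\in\FF_q}\bigl((\theta-\lambda)^{q-1}-1\bigr)\prod_{i\in U}(t_i-\lambda)$ computed earlier, and comparing with the series expansion of $P_U^{(1+n)} = [1]S_1(1+n,\sigma_U)$. First I would write, for each $\lambda\in\FF_q$,
\[
[1]\,\frac{\prod_{i\in U}(t_i-\lambda)}{(\theta-\lambda)^{1+n}}
= \prod_{i\in U}(t_i-\lambda)\cdot\frac{[1]}{(\theta-\lambda)^{1+n}},
\]
and observe that $\dfrac{[1]}{(\theta-\lambda)^{1+n}} = \dfrac{(\theta-\lambda)^{q-1}-1}{(\theta-\lambda)^{n}}$, using the identity $[1]/(\theta-\lambda) = (\theta-\lambda)^{q-1}-1$ established at the start of the subsection. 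Thus $P_U^{(1+n)} = \sum_{\lambda\in\FF_q}\prod_{i\in U}(t_i-\lambda)\,\bigl((\theta-\lambda)^{q-1-n} - (\theta-\lambda)^{-n}\bigr)$, where the negative-power terms, for $n\ge 1$, lie in the maximal ideal $\mathfrak{M}$ of $\FF_p[\underline t_\Sigma][[1/\theta]]$ (they have strictly negative degree in $\theta$), and for $n=0$ there is simply no such defect. Hence modulo $\mathfrak{M}$,
\[
P_U^{(1+n)} \equiv \sum_{\lambda\in\FF_q}\prod_{i\in U}(t_i-\lambda)\,(\theta-\lambda)^{q-1-n}\pmod{\mathfrak{M}},\qquad n=0,\ldots,q-1.
\]

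Next I would compute $\mathcal D_n(P_U)$ directly from the polynomial expression for $P_U$. Since $\mathcal D_n$ is $\FF_p[\underline t_\Sigma]$-linear, $\mathcal D_n(P_U) = \sum_{\lambda\in\FF_q}\prod_{i\in U}(t_i-\lambda)\,\mathcal D_n\bigl((\theta-\lambda)^{q-1}-1\bigr)$. For $1\le n\le q-1$ we have $\mathcal D_n(1)=0$ and $\mathcal D_n\bigl((\theta-\lambda)^{q-1}\bigr) = \binom{q-1}{n}(\theta-\lambda)^{q-1-n}$; for $n=0$ we get back $P_U$ itself, matching the $n=0$ case of the previous display. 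The key arithmetic fact is the Lucas-type congruence $\binom{q-1}{n}\equiv(-1)^n\pmod p$, valid for $0\le n\le q-1$ (write $q=p^e$; since the base-$p$ digits of $q-1$ are all $p-1$, each binomial $\binom{p-1}{n_i}\equiv(-1)^{n_i}$). Therefore $\mathcal D_n(P_U) = (-1)^n\sum_{\lambda\in\FF_q}\prod_{i\in U}(t_i-\lambda)\,(\theta-\lambda)^{q-1-n}$, which is exactly $(-1)^n$ times the right-hand side of the $\mathfrak{M}$-congruence for $P_U^{(1+n)}$ obtained above. Combining, $P_U^{(1+n)}\equiv(-1)^n\mathcal D_n(P_U)\pmod{\mathfrak M}$ for $n=0,\ldots,q-1$, as claimed.

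**Main obstacle.** There is no serious obstacle here — the statement is essentially a bookkeeping identity — but the one point that needs genuine care is the binomial congruence $\binom{q-1}{n}\equiv(-1)^n\pmod p$ for the whole range $0\le n\le q-1$; this is where the restriction $n\le q-1$ in the Lemma is really used, and it is what makes the power-series expansion of $(1+\tfrac{\eta}{\theta-\lambda})^{-1}$ in the preceding computation dovetail with the higher-derivative description. One should also be slightly careful that "modulo $\mathfrak M$" is being used consistently: $P_U^{(1+n)}$ is a priori an element of $\FF_p[\underline t_\Sigma][[1/\theta]]$ (not a polynomial) for $n\ge 1$, and the claim is about its image in $\FF_p[\underline t_\Sigma][[1/\theta]]/\mathfrak M \cong \FF_p[\underline t_\Sigma]$, which is precisely the "constant in $1/\theta$" part, i.e. the genuine polynomial part; the computation above isolates exactly that part.
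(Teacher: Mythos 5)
Your proof is correct, and it takes a genuinely different route from the paper's. The paper proves the lemma by applying Leibniz's formula to the factorization $P_U=[1]\,S_1(1,\sigma_U)$: since $\mathcal{D}_j([1])$ vanishes for $j\notin\{0,1,q\}$, the sum collapses to $[1]\,\mathcal{D}_n(S_1(1,\sigma_U))+\mathcal{D}_1([1])\,\mathcal{D}_{n-1}(S_1(1,\sigma_U))=(-1)^n P_U^{(n+1)}-\mathcal{D}_{n-1}(S_1(1,\sigma_U))$, where the first term uses (implicitly) that $\mathcal{D}_n\bigl((\theta-\lambda)^{-1}\bigr)=\binom{-1}{n}(\theta-\lambda)^{-1-n}=(-1)^n(\theta-\lambda)^{-1-n}$, and the last term has Gauss norm $<1$ and is discarded. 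You instead avoid Leibniz and hyperderivatives of rational functions entirely: you expand $P_U^{(1+n)}$ through the identity $[1]/(\theta-\lambda)^{1+n}=(\theta-\lambda)^{q-1-n}-(\theta-\lambda)^{-n}$ and throw away the negative-degree piece, and you differentiate the explicit polynomial form of $P_U$, invoking the Lucas-type congruence $\binom{q-1}{n}\equiv(-1)^n\pmod p$. That congruence is, of course, the ``finite'' avatar of $\binom{-1}{n}=(-1)^n$, so the two proofs rest on the same arithmetic fact in different costumes; your version keeps everything on the polynomial side of the ledger, at the modest cost of a separate appeal to translation-invariance of the Hasse derivatives ($\mathcal{D}_n((\theta-\lambda)^m)=\binom{m}{n}(\theta-\lambda)^{m-n}$), while the paper's version is shorter once one grants the hyperderivative formalism on $S_1(k,\sigma_U)$. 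One cosmetic remark: your parenthetical ``for $n=0$ there is simply no such defect'' is a little misleading, since for $n=0$ the discarded term $-(\theta-\lambda)^{0}=-1$ is not in $\mathfrak{M}$; what saves the case $n=0$ is that nothing needs to be discarded at all, because the full expression is already equal to $P_U=\mathcal{D}_0(P_U)$. It would be cleaner to simply note that $n=0$ is the tautology $P_U^{(1)}=P_U$ and run the estimate only for $1\le n\le q-1$, which is in fact exactly what the paper does.
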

\begin{proof} If $n=0$, the statement is clear. Assume that $q>n>0$.
By Leibnitz's formula we see that
\begin{eqnarray*}
\mathcal{D}_n(P_U)&=&\mathcal{D}_1([1])\mathcal{D}_{n-1}(S_1(1,\sigma_U))+[1]\mathcal{D}_n(S_1(1,\sigma_U))\\
&=&-\mathcal{D}_{n-1}(S_1(1,\sigma_U))+(-1)^nP_U^{(n+1)}.
\end{eqnarray*}
Now, note that $\|\mathcal{D}_{n-1}(S_1(1,\sigma_U))\|<1$.
\end{proof}
Hence, combining (\ref{anotherintermediatestep}), Lemma \ref{congruencemodM} and Proposition
\ref{dstability}, we see that 
$$P_{U,V}\in\mathcal{V}_\Sigma.$$
We have proved (multiply the above by $[1]^{-1}$) that, given $U,V\subset\Sigma$ such that $\Sigma=U\sqcup V$ there exist, for any decomposition $\Sigma=I\sqcup J$,
an element $f_{I,J}\in\FF_p$, so that 
\begin{equation}\label{withoutcongruences}
S_1(1,\sigma_U)S_1(1,\sigma_V)-S_1(2,\sigma_\Sigma)=\sum_{I\sqcup J=\Sigma}f_{I,J}S_1(1,\sigma_I)\end{equation} (the title of this subsection refers to these coefficients).
We now claim that, if $f_{I,J}\neq0$, then $|I|\equiv |\Sigma|-1\pmod{q-1}$ (that is, $|J|\equiv1\pmod{q-1}$).
For this, we consider, for all $\mu\in\FF_q^\times$, the $\FF_q$-automorphism
$$\psi_\mu:\FF_q(\underline{t}_\Sigma,\theta)\rightarrow\FF_q(\underline{t}_\Sigma,\theta)$$
which sends $t_i$ to $\mu t_i$ and $\theta$ to $\mu\theta$.
Observe that, for all $n\geq 1$,
\begin{eqnarray*}
\psi_\mu(S_1(n,\sigma_\Sigma))&=&\sum_{\lambda\in\FF_q}\frac{\prod_{i\in\Sigma}(\mu t_i-\lambda)}{(\mu\theta-\lambda)^n}\\
&=&\sum_{\lambda\in\FF_q}\frac{\mu^{|\Sigma|}\prod_{i\in\Sigma}\left(t_i-\frac{\lambda}{\mu}\right)}{\mu^n\left(\theta-\frac{\lambda}{\mu}\right)^n}\\
&=&\mu^{|\Sigma|-n}\sum_{\lambda'\in\FF_q}\frac{\prod_{i\in\Sigma}(t_i-\lambda')}{(\theta-\lambda')^n}.
\end{eqnarray*}
Hence, for all $\mu\in\FF_q^\times$ and $I\subset\Sigma$, 
$$\psi_\mu(S_1(1,\sigma_I))=\mu^{|I|-1}S_1(1,\sigma_I).$$
In particular, if $L$ is the left-hand side of the identity (\ref{withoutcongruences}),
we have 
$$\psi_\mu(L)=\mu^{|\Sigma|-2}L,\quad \mu\in\FF_q^\times,$$ and this proves our claim.
For later use, we write the result that we have reached, in the case $d=1$: 
\begin{Proposition}\label{propositiondequal1}
If $\Sigma=U\sqcup V$ there exists, for any decomposition $\Sigma=I\sqcup J$,
an element $f_{I,J}\in\FF_p$, so that 
\begin{multline*}
S_1(1,\sigma_U)S_1(1,\sigma_V)-S_1(2,\sigma_\Sigma)=\\ \sum_{I\sqcup J=\Sigma\atop |I|\equiv|\Sigma|-1\pmod{q-1}}f_{I,J}S_1(1,\sigma_I)=\sum_{I\sqcup J=\Sigma\atop |J|\equiv1\pmod{q-1}}f_{I,J}S_1(1,\sigma_I).\end{multline*}

\end{Proposition}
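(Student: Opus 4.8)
This proposition collects what the computations of the present subsection already yield; I indicate how the pieces fit together. The plan is to show that, after multiplication by $[1]=\theta^q-\theta$, the left-hand side lands in the $\FF_p$-space $\mathcal{V}_\Sigma$, and then to use the torus action $(\psi_\mu)_{\mu\in\FF_q^\times}$ to cut its support down to the asserted family of subsets. The first step is to introduce $P_{U,V}:=[1]\bigl(S_1(1,\sigma_U)S_1(1,\sigma_V)-S_1(2,\sigma_\Sigma)\bigr)$ and to note that it is a polynomial: collecting partial fractions over $(\lambda,\mu)\in\FF_q^2$, the diagonal part of the double sum exactly cancels the subtracted single sum, and off the diagonal $[1]$ reduces $(\theta-\lambda)(\theta-\mu)$ to $\prod_{\nu\notin\{\lambda,\mu\}}(\theta-\nu)$, so $P_{U,V}\in\FF_p[\theta][\undt{\Sigma}]$ has degree $\le q-2$ in $\theta$ (the coefficients lying in $\FF_p$ by $\operatorname{Gal}(\FF_p^{ac}/\FF_p)$-invariance). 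Next I would expand $P_{U,V}$ in powers of $1/\theta$ --- substituting $\eta=\lambda-\mu$, expanding $(1+\eta/(\theta-\lambda))^{-1}$ geometrically, and summing over $\eta\in\FF_q^\times$ via $\sum_{\eta\in\FF_q^\times}\eta^m=-1$ when $(q-1)\mid m$ and $0$ otherwise --- to reach the congruence (\ref{anotherintermediatestep}): modulo the maximal ideal $\mathfrak{M}$ of $\FF_p[\undt{\Sigma}][[1/\theta]]$, $P_{U,V}$ is an $\FF_p$-combination of the $[1]S_1(n+2,\sigma_{U\sqcup W})=P_{U\sqcup W}^{(n+2)}$ with $M\sqcup W=V$, $|M|+n>0$, $(q-1)\mid(|M|+n)$ and $n<q-1$, the tail $n\ge q-1$ being swept into $\mathfrak{M}$ by the estimate $\|[1]S_1(k,\sigma_W)\|\le q^{q-k}$.

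Then I would feed each surviving term through Lemma~\ref{congruencemodM}: since $n<q-1$ gives $n+1\le q-1$, one has $P_{U\sqcup W}^{(n+2)}\equiv(-1)^{n+1}\mathcal{D}_{n+1}(P_{U\sqcup W})\pmod{\mathfrak{M}}$, so $P_{U,V}$ is congruent mod $\mathfrak{M}$ to an $\FF_p$-combination of higher derivatives $\mathcal{D}_{n+1}(P_{U\sqcup W})$ with $U\sqcup W\subset\Sigma$. By Proposition~\ref{dstability} the space $\mathcal{V}_\Sigma$ is $(\mathcal{D}_n)_n$-stable and contains each generator $P_{U\sqcup W}$, hence that combination lies in $\mathcal{V}_\Sigma\subset\FF_p[\theta][\undt{\Sigma}](<q)$. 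The delicate point is the passage from congruence to equality: $P_{U,V}$ and this element of $\mathcal{V}_\Sigma$ are both honest polynomials of degree $<q$ in $\theta$, and their difference lies in $\mathfrak{M}$; but a nonzero element of $\FF_p[\theta][\undt{\Sigma}]$ has norm $\ge 1$, so $\mathfrak{M}$ contains no nonzero polynomial and the congruence becomes an equality $P_{U,V}\in\mathcal{V}_\Sigma$. Dividing by $[1]$ and using $\mathcal{V}_\Sigma=\operatorname{Vect}_{\FF_p}(P_W:W\subset\Sigma)$ with $P_W=[1]S_1(1,\sigma_W)$ gives the relation (\ref{withoutcongruences}): there exist $f_{I,J}\in\FF_p$ with $S_1(1,\sigma_U)S_1(1,\sigma_V)-S_1(2,\sigma_\Sigma)=\sum_{I\sqcup J=\Sigma}f_{I,J}S_1(1,\sigma_I)$.

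Finally, to restrict the support I would bring in, for $\mu\in\FF_q^\times$, the $\FF_q$-automorphism $\psi_\mu$ of $\FF_q(\undt{\Sigma},\theta)$ with $t_i\mapsto\mu t_i$ and $\theta\mapsto\mu\theta$. Reindexing the defining sum yields $\psi_\mu(S_1(n,\sigma_I))=\mu^{|I|-n}S_1(n,\sigma_I)$, so the left-hand side $L$ of (\ref{withoutcongruences}) satisfies $\psi_\mu(L)=\mu^{|\Sigma|-2}L$ while $\psi_\mu(S_1(1,\sigma_I))=\mu^{|I|-1}S_1(1,\sigma_I)$. Grouping the terms of (\ref{withoutcongruences}) by the residue of $|I|$ modulo $q-1$ and running over all $\mu$ produces a Vandermonde system in the $q-1$ values $\mu$ whose unique solution forces the component attached to a residue $c$ to vanish unless $c-1\equiv|\Sigma|-2\pmod{q-1}$; that is, $f_{I,J}=0$ unless $|I|\equiv|\Sigma|-1$, equivalently $|J|\equiv1\pmod{q-1}$. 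This is the displayed identity.

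The step I expect to be the main obstacle is the $\theta$-degree bookkeeping that upgrades the mod-$\mathfrak{M}$ congruence to a genuine identity in $\mathcal{V}_\Sigma$: one must track simultaneously that $P_{U,V}$ has $\theta$-degree $<q$, that the bound $n<q-1$ meets the hypothesis $n+1\le q-1$ of Lemma~\ref{congruencemodM} exactly, and that $\mathcal{V}_\Sigma\subset\FF_p[\theta][\undt{\Sigma}](<q)$ --- after which everything closes up because $\mathfrak{M}$ meets the polynomial ring only in $0$. The torus-grading step is conceptually transparent but relies on the standard semisimplicity of the $\FF_q^\times$-action, i.e.\ on inverting a Vandermonde matrix.
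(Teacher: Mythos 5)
Your proof follows the paper's argument essentially verbatim: multiply by $[1]$ to obtain the polynomial $P_{U,V}$, expand to reach the mod-$\mathfrak{M}$ congruence (\ref{anotherintermediatestep}), combine with Lemma~\ref{congruencemodM} and Proposition~\ref{dstability} to deduce $P_{U,V}\in\mathcal{V}_\Sigma$ (the upgrade from congruence to equality, which you spell out, is used implicitly in the paper), divide by $[1]$ to get (\ref{withoutcongruences}), and finally use the torus action $\psi_\mu$ to isolate the weight-$(|\Sigma|-2)$ component. Your ``Vandermonde'' phrasing of the last step is just a more explicit rendering of the paper's observation that $\psi_\mu(L)=\mu^{|\Sigma|-2}L$ while $\psi_\mu(S_1(1,\sigma_I))=\mu^{|I|-1}S_1(1,\sigma_I)$; the approaches are the same.
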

Note that the congruence conditions on $|J|$ for the $f_{I,J}$ above could have been 
encoded directly in the proof of Proposition \ref{dstability} but we have preferred to 
handle them separately to avoid too technical discussions.

\subsubsection{Computation of the coefficients $f_{I,J}$ in Proposition \ref{propositiondequal1}}

We use the explicit computation of the $\FF_p$-coefficients of the sum-shuffle 
product formula for double zeta values of Chen in \cite{CHE} to compute the coefficients $f_{I,J}$.

\begin{Lemma}\label{basis}
The $\FF_p$-vector space $\mathcal{V}_\Sigma$ has dimension $2^{|\Sigma|}$ and 
the polynomials $P_U$, $U\subset\Sigma$ form a basis of it. 
\end{Lemma}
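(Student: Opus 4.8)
The plan is to prove that the $2^{|\Sigma|}$ polynomials $P_U$, $U\subset\Sigma$, are $\FF_p$-linearly independent; since they span $\mathcal{V}_\Sigma$ by definition, this establishes both the dimension count and the basis claim simultaneously. First I would fix the ambient grading: each $P_U$ lies in $\FF_p[\theta][\underline{t}_\Sigma](<q)$, and as a polynomial in the $t_i$ it is visibly \emph{multilinear}, i.e. each variable $t_i$ occurs to degree $\leq 1$. Indeed, from the formula $P_U=\sum_{\lambda\in\FF_q}((\theta-\lambda)^{q-1}-1)\prod_{i\in U}(t_i-\lambda)$, the monomial $\prod_{i\in U}t_i$ (the top multidegree supported on exactly the variables indexed by $U$) appears in $P_U$, and the coefficient of $\prod_{i\in U}t_i$ in $P_U$ is $\sum_{\lambda\in\FF_q}((\theta-\lambda)^{q-1}-1)=[1]S_1(1,\boldsymbol 1)\cdot(\text{something})$; more usefully, it equals $\sum_{\lambda\in\FF_q}(\theta-\lambda)^{q-1}-q=\sum_{\lambda}(\theta-\lambda)^{q-1}$ in characteristic $p$, which one checks is a nonzero constant (it equals the coefficient extraction giving $-1$, or more simply it is nonzero because $[1]S_1(1,\boldsymbol 1)=P_\emptyset$ evaluated appropriately). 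The key structural point is that the $t$-supports are \emph{nested by inclusion of $U$}: a monomial $\prod_{i\in T}t_i\cdot(\text{lower order in the }t_i\text{'s})$ appearing in $P_U$ forces $T\subset U$.

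Given this, I would argue as follows. Suppose $\sum_U c_U P_U=0$ with $c_U\in\FF_p$ not all zero, and let $U_0$ be maximal (for inclusion) among the $U$ with $c_U\neq 0$. I extract from the relation the coefficient of the multilinear monomial $m_0:=\prod_{i\in U_0}t_i$, viewed as a polynomial in $\theta$. By the nesting property, $P_U$ contributes to the $m_0$-coefficient only if $U_0\subset U$; by maximality of $U_0$ among indices with nonzero $c_U$, the only such term with $c_U\neq 0$ is $U=U_0$ itself. Hence the $m_0$-coefficient of $\sum_U c_U P_U$ equals $c_{U_0}$ times the $m_0$-coefficient of $P_{U_0}$, which is the nonzero constant computed above. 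This is a nonzero element of $\FF_p[\theta]$, contradicting that the whole sum vanishes. Therefore all $c_U=0$, the $P_U$ are independent, and $\dim_{\FF_p}\mathcal{V}_\Sigma=2^{|\Sigma|}$ with the stated basis.

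The main obstacle — really the only point requiring care — is verifying that the $m_0$-coefficient of $P_{U_0}$ is genuinely nonzero and not killed modulo $p$. This reduces to showing $\sum_{\lambda\in\FF_q}(\theta-\lambda)^{q-1}\neq 0$ in $\FF_p[\theta]$, equivalently (expanding by the binomial theorem and using $\sum_{\lambda\in\FF_q}\lambda^k = -1$ if $(q-1)\mid k$ with $k>0$ and $=0$ otherwise, together with $\sum_{\lambda}1=0$) that the resulting polynomial in $\theta$ is a nonzero constant; one finds it equals $-1$ (it is exactly the constant term produced by the identity $[1]/(\theta-\lambda)=(\theta-\lambda)^{q-1}-1$ summed over $\lambda$, which gives $P_\emptyset = [1]S_1(1,\boldsymbol 1) = \sum_\lambda((\theta-\lambda)^{q-1}-1)$, a nonzero constant since $P_\emptyset \ne 0$). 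An alternative, perhaps cleaner, route to the same conclusion is to specialize: evaluate at $t_i=\theta$ for $i\notin U_0$ and $t_i$ generic for $i\in U_0$, which kills every $P_U$ with $U\not\subset U_0$ (since $B_1(\sigma_W)=\prod_{i\in W}(t_i-\theta)$ and $P_U$ is built from such products — one should double-check the precise vanishing here) and reduces the independence of the full family to the independence of $\{P_U: U\subset U_0\}$ after setting those variables equal to $\theta$; iterating gives the result. Either way, the combinatorics is the nested-support argument and the only arithmetic input is one nonvanishing check in $\FF_p[\theta]$.
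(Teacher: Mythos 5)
Your main argument is correct, but it takes a genuinely different route from the paper. The paper proves linear independence of the $S_1(1,\sigma_U)$ by applying the $\FF_p$-linear composite $\psi=(\text{specialize }t_i=\theta^{q^{i-1}})\circ\tau^s$, sending $S_1(1,\sigma_U)$ to $S_1(n_U)$ with $n_U=q^s-\sum_{i\in U}q^{i-1}$; injectivity of $U\mapsto n_U$ plus linear independence of the $S_1(n)$ (via pole orders at $\theta=\lambda\in\FF_q$) then finishes. Your argument is more elementary and works directly with the polynomial structure of $P_U$: the $P_U$ are multilinear in the $t_i$ with $i\in U$, the coefficient of the top monomial $\prod_{i\in U}t_i$ is the nonzero constant $\sum_{\lambda\in\FF_q}(\theta-\lambda)^{q-1}=-1$, and since a monomial $\prod_{i\in T}t_i$ appears in $P_U$ only when $T\subset U$, extracting the coefficient of $\prod_{i\in U_0}t_i$ for $U_0$ maximal among indices with nonzero coefficient forces $c_{U_0}=0$. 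This avoids the Frobenius twist and specialization entirely; the paper's approach, though less elementary for this lemma, introduces the map $\psi$ which it subsequently reuses to transport Chen's sum-shuffle relation from $K$ back to $K[\underline{t}_\Sigma]$, so that machinery is not wasted.

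One caution: the ``alternative, cleaner route'' you sketch at the end is not actually cleaner and does not work as stated. Setting $t_j=\theta$ for a single $j\in U$ does not kill $P_U$; rather, $(\theta-\lambda)^{q-1}-1$ absorbs the factor $(\theta-\lambda)$ to give $(\theta-\lambda)^q-(\theta-\lambda)=\theta^q-\theta=[1]$ (using $\lambda^q=\lambda$), so $P_U\big|_{t_j=\theta}=[1]\sum_{\lambda}\prod_{i\in U\setminus\{j\}}(t_i-\lambda)=[1]\,S_1(0,\sigma_{U\setminus\{j\}})$, which is generically nonzero. You flagged this with a caveat, which was wise; the nested-support argument you give as your primary route is the one that holds up, and it suffices.
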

\begin{proof}
It suffices to show that the elements $S_1(1,\sigma_U)$, $U\subset\Sigma$, are $\FF_p$-linearly independent. First of all, the fractions $S_1(n)=\sum_{\lambda\in\FF_q}\frac{1}{(\theta-\lambda)^n}\in\FF_p(\theta)$, $n=1,2,\ldots$ are linearly independent over $\FF_p$. Indeed, for each $n$,
$S_1(n)$ has poles of order $n$ at each $\theta=\lambda\in\FF_q$. Observe that the set map
$\{U:U\subset\Sigma\}\rightarrow\{0,\ldots,q^s-1\}$ ($\Sigma=\{1,\ldots,s\}$) defined by 
$U\mapsto\sum_{i\in U}q^{i-1}$ is injective (it is bijective if and only if $q=2$).
Thus, the map $\{U:U\subset\Sigma\}\rightarrow\{1,\ldots,q^s\}$ which sends 
$U$ to $q^s-\sum_{i\in U}q^{i-1}$ is also injective; let $G$ be its image.
Then, the map
$$\{S_1(1,\sigma_U):U\subset\Sigma\}\xrightarrow{\psi}\mathcal{W}:=\{S_1(n):n\in G\}$$
defined by (\footnote{We first apply the operator $\tau^s$, $\FF_q[t]$-linear and sending $\theta$ to $\theta^q$, and then, we operate the indicated specialization.})
$$\tau^s(S_1(1,\sigma_U))_{t_i=\theta^{q^{i-1}}\atop i=1,\ldots,s}=S_1\left(q^s-\sum_{i\in U}q^{i-1}\right)$$
is injective, and induces an injective $\FF_p$-linear map $\mathcal{V}_\Sigma\rightarrow\operatorname{Vect}_{\FF_p}(\mathcal{W})$. Since the latter space has dimension $2^s=2^{|\Sigma|}$,
the lemma follows.
\end{proof}
In fact, the map $\psi$ defines a $K$-algebra homomorphism
$$K[\underline{t}_\Sigma]\xrightarrow{\psi} K.$$ In Lemma \ref{basis}, we have seen that 
$\psi$ induces an isomorphism of $\FF_p$-vector spaces 
\begin{multline*}
\mathcal{U}_\Sigma:=\operatorname{Vect}_{\FF_p}(S_1(1,\sigma_U):U\subset\Sigma)\xrightarrow{\psi}\mathcal{W}_\Sigma:=\\ \operatorname{Vect}_{\FF_p}\left(
S_1(n):n=q^n-\sum_{i=1}^sc_iq^{i-1},c_i\in\{0,1\}\right).\end{multline*}
If $U\subset\Sigma$, we write $n_U=q^s-\sum_{i\in U}q^{i-1}$. The map $\psi$ thus sends
$S_1(1,\sigma_U)$ to $S_1(n_U)$.
We have seen that $S_1(1,\sigma_U)S_1(1,\sigma_V)-S_1(2,\sigma_\Sigma)\in\mathcal{U}_\Sigma$,
and we have that 
$$\psi(S_1(1,\sigma_U)S_1(1,\sigma_V)-S_1(2,\sigma_\Sigma))=S_1(n_U)S_1(n_V)-S_1(n_U+n_V),$$
because $\psi(S_1(2,\sigma_\Sigma))=S_1(2q^s-\sum_{i\in \Sigma}q^{i-1})=S_1(q^s-\sum_{i\in U}q^{i-1}+q^s-\sum_{j\in V}q^{j-1})$. Now, we invoke Chen's explicit formula in \cite{CHE} which, we recall,
says that 
\begin{equation}\label{chens}S_1(n_U)S_1(n_V)-S_1(n_U+n_V)=\sum_{0<n<n_U+n_V\atop q-1|n}f_nS_1(n_U+n_V-n),\end{equation}
where $$f_n=(-1)^{n_U-1}\binom{n-1}{n_U-1}+(-1)^{n_V-1}\binom{n-1}{n_V-1}.$$
Assuming that the right-hand side of (\ref{chens}) belongs to $\mathcal{W}_\Sigma$,
it is then equal to a combination
$$\psi\left(\sum_nf_nS_1(1,\sigma_{I_n})\right),\quad I_n\subset\Sigma,$$
and we obtain our result by pulling back this relation, thanks to Lemma \ref{basis}.
So, everything we need to show is that, if $f_n\neq 0$, then  $n=n_J$ for some $J\subset \Sigma$,
and then, compute $f_n$.

We recall that $q=p^e$, $e>0$.
We now write
$$n_U-1=\sum_{k=0}^{es-1}c_k^Up^k,\quad c_k^U\in\{1,\ldots,p-1\},$$
and similarly for $n_V$ etc. so that 
$c_k^U=p-1$ unless $k=e i$ for $i+1\in U$, case in which $c_k^U=p-2$.
Now, let $n$ be in $\{1,\ldots,n_U+n_V-1\}$, $q-1|n$. Since $n_U+n_V-1=2q^s-\sum_{i\in\Sigma}q^{i-1}-2$,
we have that $n<q^s$ for all $q$. We write
$$n-1=\sum_{k=0}^{es-1}c_kp^k,\quad c_k\in\{1,\ldots,p-1\}.$$
Then 
$\binom{n-1}{n_U-1}=\prod_{k=0}^{es-1}\binom{c_k}{c_k^U}$ by Lucas' formula, and 
$\binom{n-1}{n_U-1}\neq 0$ if and only if, for all $k\geq 0$, $c_k\geq c_k^U$. Hence, the latter 
non-vanishing condition is equivalent to $c_k=p-1$ if $e\nmid k$ and if $e\mid k$, then $c_k\in\{p-2,p-1\}$. This means that $\binom{n-1}{n_U-1}\neq 0$ if and only if $n=n_J$
for some $J\subset\Sigma$. In this case, we easily check that 
$\binom{n-1}{n_U-I}=\binom{n_J-1}{n_U-I}=(-1)^{|U\setminus J|}$. Hence,
$$(-1)^{n_U-1}\binom{n_J-1}{n_U-I}=(-1)^{q^s-\sum_{i\in U}q^{i-1}-1}(-1)^{|U\setminus J|}=(-1)^{|U|}(-1)^{|U\setminus J|}=(-1)^{|J|}.$$
Now, observe that $f_n\neq 0$ if and only if either $\binom{n-1}{n_U-1}\neq 0$ or $\binom{n-1}{n_V-1}\neq 0$ and, for what seen above, these two terms cannot be simultaneously non-zero. 
We conclude that 
$f_n\neq 0$ if and only if $n=n_J$ with either $J\subset U$ or $J\subset V$, and if this is the case, 
then $f_n=(-1)^{|J|}$.
Observe also that
if $U\sqcup V=I\sqcup J=\Sigma$, then $n_U+n_V=n_I+n_J$. Hence, $n_U+n_V-n=n_U+n_V-n_J=n_I$
with $I\sqcup J=\Sigma$, and therefore, the right-hand side of (\ref{chens}) is:
$$\sum_{\begin{smallmatrix}0<n<n_U+n_V\\ q-1|n\end{smallmatrix}}f_nS_1(n_U+n_V-n)=-\sum_{
\begin{smallmatrix}I\sqcup J=\Sigma\\ J\subset U \text{ or }J\subset V\\
|J|\equiv 1\pmod{q-1}\end{smallmatrix}}\psi(S_1(1,\sigma_I)),$$ and we are done because $\psi$ 
is injective on $\mathcal{U}_\Sigma$.
We have proved:

\begin{Proposition}\label{propositiondequal1better}
If $\Sigma=U\sqcup V$, then
\begin{multline*}S_1(1,\sigma_U)S_1(1,\sigma_V)-S_1(2,\sigma_\Sigma)=\\ 
=\sum_{\begin{smallmatrix}I\sqcup J=\Sigma\\ |I|\equiv|\Sigma|-1\pmod{q-1}\end{smallmatrix}}f_{I,J}S_1(1,\sigma_I)=-\sum_{\begin{smallmatrix}I\sqcup J=\Sigma\\ |J|\equiv1\pmod{q-1}\\
J\subset U\text{ or }J\subset V\end{smallmatrix}}S_1(1,\sigma_I).\end{multline*}
\end{Proposition}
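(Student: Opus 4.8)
The plan is to take Proposition~\ref{propositiondequal1} as the starting point, since it already provides coefficients $f_{I,J}\in\FF_p$, supported on decompositions $\Sigma=I\sqcup J$ with $|J|\equiv1\pmod{q-1}$, satisfying
\[
S_1(1,\sigma_U)S_1(1,\sigma_V)-S_1(2,\sigma_\Sigma)=\sum_{I\sqcup J=\Sigma}f_{I,J}\,S_1(1,\sigma_I),
\]
so that the only remaining task is to pin down these coefficients. Both sides lie in $\mathcal{U}_\Sigma=\operatorname{Vect}_{\FF_p}(S_1(1,\sigma_U):U\subset\Sigma)$, and by Lemma~\ref{basis} the $K$-algebra homomorphism $\psi\colon K[\underline{t}_\Sigma]\to K$ (apply $\tau^s$, then specialize $t_i=\theta^{q^{i-1}}$; here $\Sigma=\{1,\dots,s\}$) restricts to an injection on $\mathcal{U}_\Sigma$. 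Hence it suffices to determine the identity after applying $\psi$. First I would record the elementary facts $\psi(S_1(1,\sigma_U))=S_1(n_U)$ with $n_U:=q^s-\sum_{i\in U}q^{i-1}$, and $\psi(S_1(2,\sigma_\Sigma))=S_1(n_U+n_V)$, so that the left-hand side is sent to $S_1(n_U)S_1(n_V)-S_1(n_U+n_V)$.

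Next I would invoke Chen's explicit formula, which expands this as $\sum_{0<n<n_U+n_V,\ q-1\mid n}f_nS_1(n_U+n_V-n)$ with $f_n=(-1)^{n_U-1}\binom{n-1}{n_U-1}+(-1)^{n_V-1}\binom{n-1}{n_V-1}$, and then carry out a base-$p$ digit analysis. Writing $q=p^e$ and using that $n<q^s$ in the relevant range, one expands $n_U-1=\sum_k c_k^Up^k$ in base $p$, with $c_k^U=p-1$ except $c_k^U=p-2$ at the positions $k=e(j-1)$, $j\in U$, and similarly for $n_V-1$ and $n-1$. Lucas' theorem gives $\binom{n-1}{n_U-1}=\prod_k\binom{c_k}{c_k^U}$, which is nonzero precisely when every base-$p$ digit of $n-1$ equals $p-1$ off the positions $e(j-1)$, $j\in U$, and lies in $\{p-2,p-1\}$ at those positions, i.e.\ when $n=n_J$ for some $J\subset U$; in that case $\binom{n_J-1}{n_U-1}=(-1)^{|U\setminus J|}$, whence $(-1)^{n_U-1}\binom{n_J-1}{n_U-1}=(-1)^{|J|}$. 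Since the two summands defining $f_n$ cannot be simultaneously nonzero, one concludes that $f_n\neq0$ exactly when $n=n_J$ with $J\subset U$ or $J\subset V$ (in which case automatically $q-1\mid n_J$, equivalently $|J|\equiv1\pmod{q-1}$), and then $f_n=(-1)^{|J|}$.

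Finally I would reindex the sum: since $n_U+n_V=n_I+n_J$ whenever $I\sqcup J=\Sigma$, one has $n_U+n_V-n_J=n_I$ with $I=\Sigma\setminus J$, and moreover $(-1)^{|J|}=-1$ in $\FF_p$ under the constraint $|J|\equiv1\pmod{q-1}$ (trivially if $p=2$, and because $q-1$ is then even, so $|J|$ is odd, if $p$ is odd). Therefore the right-hand side of Chen's formula equals $-\sum_{I\sqcup J=\Sigma,\ J\subset U\text{ or }J\subset V,\ |J|\equiv1\pmod{q-1}}\psi(S_1(1,\sigma_I))$, and pulling back along $\psi|_{\mathcal{U}_\Sigma}$ yields the asserted identity, so that $f_{I,J}=-1$ when $J\subset U$ or $J\subset V$ and $f_{I,J}=0$ otherwise. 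The step I expect to be the main obstacle is the digit bookkeeping through Lucas' theorem together with correctly tracking the signs $(-1)^{n_U-1}$ and $(-1)^{|U\setminus J|}$ and reducing their product to $(-1)^{|J|}=-1$; once this is in place, the rest is a routine transport of the identities of Proposition~\ref{propositiondequal1} and Chen's theorem along the homomorphism $\psi$ furnished by Lemma~\ref{basis}.
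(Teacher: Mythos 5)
Your proposal is correct and follows essentially the same route as the paper's own proof: start from Proposition~\ref{propositiondequal1}, transport the identity through the injective map $\psi$ of Lemma~\ref{basis}, invoke Chen's sum-shuffle formula, determine which coefficients $f_n$ survive via Lucas' theorem and the base-$p$ digit analysis of $n_U-1$, compute the sign $(-1)^{n_U-1}\binom{n_J-1}{n_U-1}=(-1)^{|J|}=-1$, reindex using $n_U+n_V=n_I+n_J$, and pull back along $\psi$. Your write-up is in fact slightly more careful than the paper in two small places: you state the Lucas nonvanishing criterion as ``$n=n_J$ for some $J\subset U$'' rather than the paper's looser ``$J\subset\Sigma$'' (only $J\subset U$ makes $\binom{n_J-1}{n_U-1}$ nonzero), and you spell out why $(-1)^{|J|}=-1$ under the constraint $|J|\equiv1\pmod{q-1}$ (automatic when $p=2$, and $q-1$ even forces $|J|$ odd when $p$ is odd), a point the paper leaves implicit.
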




\subsubsection{The case of $d\geq 1$ in the Theorem \ref{simplesumshuffle}}

In this subsection, we prove Theorem \ref{simplesumshuffle}.
This part follows closely the principles introduced by Thakur in \cite{THA2}.
For the sake of completeness, we give full details.
We denote by $A^+(d)$ the set of monic polynomials of degree $d$ in $A$ and by $A^+(<d)$
the set of monic polynomials of $A$ which have degree $<d$. For $n\in A^+(d)$ and
$m\in A^+(<d)$, we write
$$S_{n,m}=\{(n+\mu m,n+\nu);\mu,\nu\in\FF_q,\mu\neq\nu\}.$$
We have that 
$$S_{n,m}\subset A^+(d)\times A^+(d)\setminus\Delta,$$ where $\Delta$ is the diagonal 
of $A^+(d)\times A^+(d)$. Further, we recall from \cite{THA2}, the next:
\begin{Lemma}
The following properties hold, for $(n,m),(n',m')\in A^+(d)\times A^+(<d)$.
\begin{enumerate}
\item $S_{n,m}\cap S_{n',m'}\neq\emptyset$ if and only if $m=m'$ and $n=n'+\lambda m'$
for some $\lambda\in\FF_q$.
\item If $S_{n,m}\cap S_{n',m'}\neq\emptyset$ then $S_{n,m}=S_{n',m'}$.
\item For all $(a,b)\in A^+(d)\times A^+(d)\setminus\Delta$ there exists 
$(n,m)\in A^+(d)\times A^+(<d)$ with $(a,b)\in S_{n,m}$.
\end{enumerate}
Therefore, the sets $S_{n,m}$ determine a partition of $A^+(d)\times A^+(d)\setminus\Delta$.
\end{Lemma}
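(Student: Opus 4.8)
The plan is to verify the three listed properties in turn and then conclude with the standard observation that a family of subsets of a set $X$ which is pairwise ``equal or disjoint'' and which covers $X$ becomes, after deleting repetitions, a partition of $X$. The computations are all elementary linear algebra over $\FF_q$; the only point that needs a little attention is that the monic normalization of a nonzero polynomial is unique, and I do not anticipate any real obstacle. It is convenient to first record that $S_{n,m}=(L_{n,m}\times L_{n,m})\setminus\Delta$, where $L_{n,m}:=\{n+\mu m:\mu\in\FF_q\}$ is the affine line through $n$ in direction $m$; since $m\neq 0$ it has exactly $q$ points, and it sits inside the affine $\FF_q$-space $A^+(d)\cong\FF_q^{d}$ whose coordinates are the coefficients of $1,\theta,\dots,\theta^{d-1}$.

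First I would do (3). Given distinct $a,b\in A^+(d)$, the difference $a-b$ is nonzero and of degree $<d$ because $a$ and $b$ are both monic of degree $d$; letting $c\in\FF_q^{\times}$ be its leading coefficient and setting $m:=c^{-1}(a-b)\in A^+(<d)$ and $n:=b$, one gets $(a,b)=(n+cm,\,n+0\cdot m)\in S_{n,m}$. (For $d=0$ there is nothing to prove, since then $A^+(d)\times A^+(d)\setminus\Delta=\emptyset$.)

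Next I would prove (1). The implication ``$\Leftarrow$'' is a change of parameters: if $m=m'$ and $n=n'+\lambda m'$, then $n+\mu m=n'+(\lambda+\mu)m'$, and since $(\mu,\nu)\mapsto(\lambda+\mu,\lambda+\nu)$ is a bijection of $\{(\mu,\nu)\in\FF_q^2:\mu\neq\nu\}$ onto itself, one concludes $L_{n,m}=L_{n',m'}$ and hence $S_{n,m}=S_{n',m'}$, which in particular is nonempty. For ``$\Rightarrow$'', take $(a,b)\in S_{n,m}\cap S_{n',m'}$ and write $a=n+\mu m=n'+\mu'm'$, $b=n+\nu m=n'+\nu'm'$ with $\mu\neq\nu$ and $\mu'\neq\nu'$. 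Subtracting yields $(\nu-\mu)m=b-a=(\nu'-\mu')m'$ with $\nu-\mu,\nu'-\mu'\in\FF_q^{\times}$; comparing leading coefficients and using that $m,m'$ are monic forces $\nu-\mu=\nu'-\mu'$, hence $m=m'$, and then $n-n'=(\mu'-\mu)m'$ shows $n\in n'+\FF_q m'$.

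Property (2) requires nothing further: by (1) a nonempty intersection forces $m=m'$ and $n\in n'+\FF_q m'$, and the ``$\Leftarrow$'' computation above then gives $S_{n,m}=S_{n',m'}$. Combining, the distinct blocks among the $S_{n,m}$ are pairwise disjoint by (2) and cover $A^+(d)\times A^+(d)\setminus\Delta$ by (3), so they form a partition. As a numerical check one may note that each block has $q(q-1)$ elements, the direction-$m$ lines partition $A^+(d)$ into $q^{d-1}$ lines so there are $q^{d-1}\,|A^+(<d)|=q^{d-1}\frac{q^{d}-1}{q-1}$ distinct blocks, and $q^{d-1}\frac{q^{d}-1}{q-1}\cdot q(q-1)=q^{d}(q^{d}-1)=|A^+(d)\times A^+(d)\setminus\Delta|$.
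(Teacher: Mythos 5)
Your proof is correct. The paper itself offers no argument for this lemma, simply citing Thakur's \emph{Shuffle Relations for Function Field Multizeta Values}, so there is no in-paper proof to compare against; your elementary verification (correctly reading the evident typo $n+\nu$ in the paper's definition of $S_{n,m}$ as $n+\nu m$, as the later computations in \S\ref{endofproofoftheorem} confirm) is the natural one. The key observations are all in order: $S_{n,m}=(L_{n,m}\times L_{n,m})\setminus\Delta$ for the affine line $L_{n,m}$; the monic normalization in (3) works because the top-degree terms of $a,b\in A^+(d)$ cancel; and in the forward direction of (1) the identity $(\nu-\mu)m=(\nu'-\mu')m'$ with nonzero scalars forces $m=m'$ precisely because both are monic, which is the one place the monic hypothesis is essential. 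The closing cardinality check is a nice consistency test but not logically needed.
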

\begin{proof} See Thakur's \cite{THA2}.\end{proof}
We also have:
\begin{Lemma}
The following properties hold:
\begin{enumerate}
\item $S'_{n,m}\cap S'_{n',m'}\neq\emptyset$ if and only if 
$m=m'$ and there exists $\mu,\mu'\in\FF_q$ such that
$n+\mu m=n'+\mu'm'$.
\item If $S'_{n,m}\cap S'_{n',m'}\neq\emptyset$ then $S'_{n,m}= S'_{n',m'}$.
\item If $(a,b)\in A^+(d)\times A^+(<d)$, there exists $(n,m)\in A^+(d)\times A^+(<d)$
such that $(a,b)\in S'_{n,m}$.
\end{enumerate}
Therefore, the sets $S'_{n,m}$ determine a partition of $A^+(d)\times A^+(<d)$
And $S'_{n,m}=S'_{n',m'}$ if and only if $S_{n,m}=S_{n',m'}$.
\end{Lemma}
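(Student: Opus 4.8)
The plan is to verify properties (1)--(3) directly from the definition of the sets $S'_{n,m}$, following verbatim the pattern used by Thakur for the $S_{n,m}$ in \cite{THA2}, and then to obtain the partition statement and the compatibility with the $S_{n,m}$ as formal consequences. Recall that for $n\in A^+(d)$ and $m\in A^+(<d)$ the set $S'_{n,m}=\{(n+\mu m,\,m):\mu\in\FF_q\}$ records a point of the affine line $n+\FF_q m\subset A^+(d)$ together with its (normalised) direction $m$. The one fact to keep in mind throughout is the degree inequality $\deg_\theta m<d=\deg_\theta n$: it guarantees that every $n+\mu m$ is again monic of degree $d$ and that $m$ itself is monic of degree $<d$, so that $S'_{n,m}$ is a well-defined subset of $A^+(d)\times A^+(<d)$ of cardinality $q$; moreover, replacing $n$ by $n+\lambda m$ and reindexing $\mu\mapsto\mu+\lambda$ shows that $S'_{n,m}$ depends only on $m$ and on the coset $n+\FF_q m$.

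For (1): an element lying in both $S'_{n,m}$ and $S'_{n',m'}$ has second coordinate equal to $m$ and to $m'$, forcing $m=m'$, and equating first coordinates produces $\mu,\mu'\in\FF_q$ with $n+\mu m=n'+\mu'm'$; conversely, any such relation (with $m=m'$) exhibits a common point. For (2): by (1) we then have $m=m'$ and $n'=n+(\mu-\mu')m\in n+\FF_q m$, so by the last remark above $S'_{n',m'}=S'_{n,m}$. For (3): for any $(a,b)\in A^+(d)\times A^+(<d)$ one has $(a,b)\in S'_{a,b}$ (take $\mu=0$). The partition statement is then immediate: by (3) the sets $S'_{n,m}$ cover $A^+(d)\times A^+(<d)$, and by (1)--(2) any two of them are disjoint or equal. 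Finally, for the equivalence ``$S'_{n,m}=S'_{n',m'}$ iff $S_{n,m}=S_{n',m'}$'', I would simply observe that, by items (1)--(2) of the two Lemmas, each side is equivalent to the \emph{same} condition: $m=m'$ and $n-n'\in\FF_q m$.

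I do not expect any genuine obstacle here --- the whole Lemma is bookkeeping parallel to \cite{THA2}, and the only delicate point is the consistent use of $\deg_\theta m<d$, which both keeps $n+\mu m$ inside $A^+(d)$ and prevents lines with distinct directions from being identified. The point of the Lemma, which I would stress at this stage, is that the two partitions are indexed by one and the same set --- pairs $(n,m)\in A^+(d)\times A^+(<d)$ modulo $(n,m)\sim(n+\lambda m,m)$, equivalently affine lines of $A^+(d)$ with a marked point. This is exactly what is needed in the next step: in the product $S_d(1,\sigma_U)S_d(1,\sigma_V)$ one separates the diagonal contribution $S_d(2,\sigma_\Sigma)$ and regroups the remaining double sum over the blocks $S_{n,m}$; on each block the $d=1$ identity of Proposition \ref{propositiondequal1better}, applied after the obvious change of variables to the line $n+\FF_q m$, produces terms that reassemble, via the matching partition by the $S'_{n,m}$, into the right-hand side of Theorem \ref{simplesumshuffle}.
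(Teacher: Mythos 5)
Your proof is correct, and it is essentially what lies behind the paper's one-word proof (``Immediate''); the argument is exactly the bookkeeping you describe. One point worth noting: the paper never actually writes down the definition of $S'_{n,m}$, and you have correctly reconstructed it as $S'_{n,m}=\{(n+\mu m,\,m):\mu\in\FF_q\}$ from the way it is used at the end of the proof of Theorem \ref{simplesumshuffle} (where each block $S'_{n,m}$ contributes $\sum_{\mu\in\FF_q}\sigma_I(n+\mu m)\sigma_J(m)/((n+\mu m)m)$ and the union over $\mathfrak{S}'$ reconstitutes the full double sum over $A^+(d)\times A^+(<d)$). Your observation that both partitions are indexed by the same quotient --- pairs $(n,m)$ modulo $(n,m)\sim(n+\lambda m,m)$, i.e.\ marked affine lines in $A^+(d)$ --- is precisely the content of the final sentence of the Lemma and is the mechanism that makes the passage from $\mathfrak{S}$ to $\mathfrak{S}'$ in \S\ref{endofproofoftheorem} legitimate.
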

\begin{proof}
Immediate.
\end{proof}
We thus have two partitions:
\begin{eqnarray*}
\mathfrak{S}&=&\{S;S=S_{n,m},(n,m)\in A^+(d)\times A^+(<d)\},\\
\mathfrak{S}'&=&\{S';S'=S'_{n,m},(n,m)\in A^+(d)\times A^+(<d)\}.
\end{eqnarray*}
\subsubsection{End of proof of Theorem \ref{simplesumshuffle}}\label{endofproofoftheorem}
We recall that $\Sigma=U\sqcup V$. We have, for all $d\geq 1$:
\begin{eqnarray*}
S_d(1;\sigma_U)S_d(1;\sigma_V)-S_d(2;\sigma_\Sigma)&=&\sum_{(a,b)\in A^+(d)\times A^+(d)\setminus\Delta}
\frac{\sigma_U(a)\sigma_V(b)}{ab}\\ 
&=&\sum_{S\in\mathfrak{S}}\sum_{(a,b)\in S}\frac{\sigma_U(a)\sigma_V(b)}{ab}.
\end{eqnarray*}
We compute, for any choice of  $S\in\mathfrak{S}$, the sum
$$\sum_{(a,b)\in S}\frac{\sigma_U(a)\sigma_V(b)}{ab}.$$
We note, for all $U\subset\Sigma$, as $\sigma_U$ is a product of injective $\FF_q$-algebra homomorphisms, that it extends in an unique way to a group homomorphism
$\sigma_U:K^\times\rightarrow\FF_q(\underline{t}_\Sigma)^\times.$ In particular, for all $U\subset\Sigma$, we have the identities:
$\sigma_U(n+\mu m)=\sigma_U(m)\sigma_U\left(\frac{n}{m}+\mu\right)$ for all $\mu\in\FF_q$.
We thus have, with $S=S_{n,m}$ and with $\psi_{n,m}$ the substitution $\theta\mapsto\frac{n}{m}$, $t_i\mapsto\chi_{t_i}(\frac{n}{m})$, $i\in\Sigma$:
\begin{eqnarray*}
\sum_{(a,b)\in S}\frac{\sigma_U(a)\sigma_V(b)}{ab}&=&\sum_{\mu,\nu\in\FF_q\atop
\mu\neq \nu}\frac{\sigma_U(n+\mu m)\sigma_V(n+\mu m)}{(n+\mu m)(n+\nu m)}\\
&=&\frac{\sigma_U(m)\sigma_V(m)}{m^{2}}\sum_{\mu,\nu\in\FF_q\atop
\mu\neq \nu}\frac{\sigma_U(\frac{n}{m}+\mu)\sigma_V(\frac{n}{m}+\nu)}{(\frac{n}{m}+\mu)(\frac{n}{m}+\nu)}\\
&=&\frac{\sigma_U(m)\sigma_V(m)}{m^{2}}[S_1(1;\sigma_U)S_1(1;\sigma_V)-S_1(2;\sigma_\Sigma)]_{\psi_{n,m}}\\
&=&-\frac{\sigma_U(m)\sigma_V(m)}{m^{2}}\left[\sum_{\begin{smallmatrix}I\sqcup J=\Sigma\\
|I|\equiv|\Sigma|-1\pmod{q-1}\\ J\subset U\text{ or }J\subset V\end{smallmatrix}}S_1(1;\sigma_{I})\right]_{\psi_{n,m}},
\end{eqnarray*}
by Proposition
\ref{propositiondequal1better}.
Hence,
\begin{eqnarray*}
\sum_{(a,b)\in S}\frac{\sigma_U(a)\sigma_V(b)}{ab}
&=&\frac{\sigma_\Sigma(m)}{m^{2}}\sum_{\begin{smallmatrix}I\sqcup J=\Sigma\\
|I|\equiv|\Sigma|-1\pmod{q-1}\\ J\subset U\text{ or }J\subset V\end{smallmatrix}}\sum_{\mu\in\FF_q}\frac{\sigma_{I}\left(\frac{n}{m}+\mu\right)}{\frac{n}{m}+\mu}\\
&=&\sum_{\begin{smallmatrix}I\sqcup J=\Sigma\\
|I|\equiv|\Sigma|-1\pmod{q-1}\\ J\subset U\text{ or }J\subset V\end{smallmatrix}}\sum_{\mu\in\FF_q}\frac{\sigma_{I}(n+\mu m)\sigma_{J}(m)}{(n+\mu m)m}\\
&=&\sum_{\begin{smallmatrix}I\sqcup J=\Sigma\\
|J|\equiv1\pmod{q-1}\\ J\subset U\text{ or }J\subset V\end{smallmatrix}}\sum_{(a,b)\in S'}\frac{\sigma_{I}(a)\sigma_{J}(b)}{ab},
\end{eqnarray*}
where $S'=S'_{n,m}$. Summing over all $S\in\mathfrak{S}$ induces a sum over all
$S'\in\mathfrak{S}'$. Since
$$\sum_{S'\in\mathfrak{S}'}\sum_{(a,b)\in S'}\frac{\sigma_{I}(a)\sigma_{J}(b)}{ab}=
\sum_{(a,b)\in A^+(d)\times A^+(<d)}\frac{\sigma_{I}(a)\sigma_{J}(b)}{ab}=S_d\left(\begin{matrix}\sigma_I & \sigma_J \\ 1 & 1\end{matrix}\right),$$
and the Theorem follows.

\section{Further properties of the fractions $S_1(1,\sigma_\Sigma)$}

There are completely explicit formulas for $S_1(1,\sigma_\Sigma)$ that 
have not been used yet. One easily proves:
\begin{equation}\label{S0}
S_1(0,\sigma_\Sigma)=\sum_{\lambda\in \FF_q}\prod_{i\in \Sigma}(t_i-\lambda)=
\sum_{j=1}^{\lfloor \frac{|\Sigma|}{q-1}\rfloor}e_{|\Sigma|-j(q-1)}(\underline{t}_\Sigma),
\end{equation} where 
$e_{n}(\underline{t}_\Sigma)$ is the $n$-th elementary polynomial in the variables $\underline{t}_\Sigma$.
Then, we have the following formula, where $W_i=\Sigma\cup V_i$ with $V_i$ a set with exactly 
$i$ elements, such that $\Sigma\cap V_i=\emptyset$, and where $\xi_i$ is the map "substitution
of $t_i$ with $0$" (a $K$-algebra homomorphism), for all $i\in V_i$:
$$S_1(1,\sigma_\Sigma)=\sum_{j=1}^{q-1}(-\theta)^j\xi_{q-1-j}(S_1(0,\sigma_{W_{q-1-j}})).$$
In particular, if $|U|\geq q$ the coefficient of $\theta^{q-1}$ in $S_1(1,\sigma_U)$ (as a polynomial in $\theta$) is proportional to $S_1(0,\sigma_U)$. We have seen, in Lemma \ref{basis}, 
that the fractions $S_1(1,\sigma_U)$, $U\subset\Sigma$ are linearly independent over $\FF_p$.
In the opposite direction, there are non-trivial linear forms between the polynomials $S_1(0,\sigma_U)$.

\begin{Proposition} For all $N\geq 1$ and $b\geq q$, 
The elements $S_1(0,\sigma_V)\in\FF_p[\underline{t}_\Sigma]$ for $V\subset\Sigma$
with $|V|=b$ and $|\Sigma|=b+p^N-1$ are non-zero and linearly dependent over $\FF_p$.
\end{Proposition}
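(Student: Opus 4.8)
The plan is to prove the statement by exhibiting one completely explicit $\FF_p$-linear relation, namely
$$\sum_{\substack{V\subseteq\Sigma\\ |V|=b}}S_1(0,\sigma_V)=0 ,$$
together with the (easy) observation that every individual $S_1(0,\sigma_V)$ is nonzero. For the non-vanishing I would just invoke (\ref{S0}): since $b\ge q-1$ that sum is nonempty, and its unique homogeneous part of top degree is $e_{b-(q-1)}(\underline{t}_V)$, a nonzero squarefree polynomial of degree $b-(q-1)\ge 1$ in the variables $\underline{t}_V$; so $S_1(0,\sigma_V)\neq 0$. As $\binom{|\Sigma|}{b}\ge 1$, the displayed identity is then a nontrivial $\FF_p$-dependence among the $S_1(0,\sigma_V)$, which is what we want.

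To prove the identity I would interchange the two sums and reduce everything to a binomial congruence. With $s=|\Sigma|=b+p^N-1$,
$$\sum_{|V|=b}S_1(0,\sigma_V)=\sum_{|V|=b}\sum_{\lambda\in\FF_q}\prod_{i\in V}(t_i-\lambda)=\sum_{\lambda\in\FF_q}e_b\bigl((t_i-\lambda)_{i\in\Sigma}\bigr),$$
and then expand $e_b$ of the shifted variables via $\prod_{i\in\Sigma}(1+x(t_i-\lambda))=\sum_{k}e_k(\underline{t}_\Sigma)\,x^{k}(1-x\lambda)^{s-k}$, which gives $e_b\bigl((t_i-\lambda)_{i\in\Sigma}\bigr)=\sum_{k=0}^{b}\binom{s-k}{b-k}(-\lambda)^{b-k}e_k(\underline{t}_\Sigma)$. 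Summing over $\lambda$ with the standard identity $\sum_{\lambda\in\FF_q}\lambda^{m}=-1$ when $(q-1)\mid m$ and $m\ge 1$ and $=0$ otherwise, the whole expression collapses to an $\FF_p$-combination of the $e_k(\underline{t}_\Sigma)$ whose coefficients are, up to sign, the binomials $\binom{s-k}{b-k}$, with $k$ running over the values satisfying $1\le b-k\le b$ and $(q-1)\mid(b-k)$.

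Thus the entire statement reduces to the congruence $\binom{s-k}{b-k}=\binom{(b-k)+(p^{N}-1)}{\,b-k\,}\equiv 0\pmod p$ for each admissible $k$, and this is the step where I expect the real work to be and where the hypotheses must be used. It is an instance of Kummer's (equivalently Lucas') theorem: the base-$p$ digits of $p^{N}-1$ are all equal to $p-1$, so adding $b-k\ge 1$ to $p^{N}-1$ forces at least one carry — hence $p\mid\binom{(b-k)+(p^{N}-1)}{b-k}$ — unless the lowest $N$ base-$p$ digits of $b-k$ all vanish, i.e.\ unless $p^{N}\mid b-k$. Since $b-k$ is a positive multiple of $q-1$ not exceeding $b$ and $\gcd(q-1,p)=1$, what remains is to check that the numerical hypotheses rule out $p^{N}\mid b-k$ for every admissible $k$ (equivalently that $\lfloor b/(q-1)\rfloor<p^{N}$); granting this, each term on the right-hand side vanishes modulo $p$ and the identity — hence the proposition — follows. (Should one need the statement in a regime where this last divisibility can occur, the plain ``all ones'' relation no longer works and must be replaced by a finer one, controlled by the mod-$p$ ranks of the inclusion matrices from the $(b-j(q-1))$-subsets into the $b$-subsets of $\Sigma$.)
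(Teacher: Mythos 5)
Your route is essentially the paper's: you aim at the single explicit relation $\sum_{|V|=b}S_1(0,\sigma_V)=0$, interchange the two sums, and reduce to the binomial congruences $\binom{(p^N-1)+m}{m}\equiv 0\pmod p$ where $m$ runs over the positive multiples of $q-1$ with $m\le b$. The paper reaches the same binomials via the counting identity $\sum_{|V|=b}e_a(\underline{t}_V)=\binom{c-a}{b-a}e_a(\underline{t}_\Sigma)$ (with $c=|\Sigma|$) rather than your generating-function shift, and then harmlessly passes from the modulus $q-1$ to $p-1$; these are cosmetic differences. Where you do better is that you stop and flag the real point: Kummer/Lucas produces a carry — hence $p\mid\binom{(p^N-1)+m}{m}$ — exactly when the lowest $N$ base-$p$ digits of $m$ are not all zero, i.e.\ when $p^N\nmid m$, and since $\gcd(p,q-1)=1$ this fails precisely when $j=m/(q-1)$ is a multiple of $p^N$. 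So the relation only goes through under $\lfloor b/(q-1)\rfloor<p^N$, and you are right to say that this remains to be checked.

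It does not follow from the stated hypotheses $b\ge q$, $N\ge 1$, and the paper's own proof slips past it: it asserts ``we have, at once, $k,n-k<p^N$'' for $k=j(p-1)$, but $k$ runs up to (roughly) $b$, so $k<p^N$ is only automatic when $b<p^N$, which is not assumed. The gap is genuine, and the statement as printed actually fails without an extra hypothesis. Take $p=q=2$, $N=1$, $b=2$, so $|\Sigma|=3$: one computes $S_1(0,\sigma_{\{i,j\}})=t_i t_j+(t_i+1)(t_j+1)=t_i+t_j+1$ in $\FF_2[t_1,t_2,t_3]$, and the three polynomials $t_1+t_2+1,\ t_1+t_3+1,\ t_2+t_3+1$ are $\FF_2$-linearly independent; consistently, $\sum_V S_1(0,\sigma_V)\equiv\binom{3}{2}\equiv 1\pmod 2$ because the $j=2$ binomial survives. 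So the proposition — and both proofs — need the extra assumption $\lfloor b/(q-1)\rfloor<p^N$ (equivalently $b<(q-1)p^N$), under which your argument closes cleanly. Your closing caveat, that outside this regime the ``all ones'' relation must be replaced by something governed by the mod-$p$ ranks of the inclusion matrices, is the correct diagnosis.
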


\begin{proof} The non-vanishing is clear because $b\geq q$.
Observe, for $0\leq a\leq b\leq c=|\Sigma|$ (recall
$e_a(\underline{t}_V)=\sum_{U\subset V\atop |U|=a}\prod_{i\in U}t_i$) the relation:
$$\sum_{V\subset\Sigma\atop |V|=b}e_a(\underline{t}_V)=
\sum_{V\subset\Sigma\atop |V|=b}=
\sum_{U\subset V\atop |U|=a}\left(\prod_{i\in U}t_i\right)\sum_{U\subset V\subset\Sigma\atop |V|=b}1=
\binom{c-a}{b-a}\sum_{U\subset V\atop |U|=a}\left(\prod_{i\in U}t_i\right),$$
which holds in any commutative ring $R[\underline{t}_\Sigma]$.
If $v_p(\binom{c-a}{b-a})>0$ (with $v_p$ the $p$-adic valuation of $\QQ$), then this yields a non-trivial $\FF_p$-linear relation among 
the elementary symmetric polynomials $e_a(\underline{t}_V)\in\FF_p[\underline{t}_\Sigma]$ for $V\subset\Sigma$.
Looking at the identity (\ref{S0}), if 
$$
v_p\left(\binom{c-b+j(q-1)}{j(q-1)}\right)>0,\quad \forall j=1,\ldots,\left\lfloor\frac{b}{q-1}\right\rfloor,
$$
 then
\begin{equation}\label{relations0}
\sum_{V\subset \Sigma\atop |V|=b}S_1(0,\sigma_V)=0.
\end{equation}
Since $p-1\mid q-1$, if 
\begin{equation}\label{conditiononvpval} 
v_p\left(\binom{c-b+j(p-1)}{j(p-1)}\right)>0,\quad \forall j=1,\ldots,\left\lfloor\frac{b}{p-1}\right\rfloor,
\end{equation}
then (\ref{relations0}) holds.
We recall that, for $p$ a prime number and $n,k$ integers with $n\geq k\geq 0$,
$v_p(\binom{n}{k})$ is the sum of the carry over in the base-$p$ sum of $k$ and $n-k$.
Hence, $v_p(\binom{n}{k})>0$ if there is at least one carry over.
In (\ref{conditiononvpval}) for fixed $j$, we take $n=c-b+j(p-1)$ and $k=j(p-1)$.
We choose $N\geq 1$ an integer, and we set $c-b=p^N-1=(p-1)\sum_{i=0}^{N-1}p^i$.
We have, at once, $k,n-k<p^N$. Hence, for all $j$, there is carry over in the sum of $k+(n-k)$.
Finally, if we choose $|\Sigma|=c=p^N-1+b$ and $|V|=b$, we are done.
\end{proof}

\section{Linear relations over $K(\underline{t}_\Sigma)$ for few variables}

We denote by $l_d$ the product $(\theta-\theta^{q^d})(\theta-\theta^{q^{d-1}})\cdots (\theta-\theta^{q})$ and by
$b_d(t)$ the product $(t-\theta)(t-\theta^q)\cdots(t-\theta^{q^{d-1}})$.
In particular, with the usual conventions on empty products, $b_0=l_0=1$.

We collect, in this subsection, some explicit linear dependence relation
for our double zeta values in Tate algebras with $|\Sigma|$ small.
First of all, we observe:
\begin{Lemma}
We have:
\begin{equation}\label{rudy1}
S_d(1,\sigma_I)=\frac{\prod_{i\in I}b_d(t_i)}{l_d},\quad I\subset\Sigma=\{1,\ldots,q\},\quad |I|<q.
\end{equation}
\end{Lemma}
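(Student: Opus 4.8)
The plan is to prove the formula
$$
S_d(1,\sigma_I)=\frac{\prod_{i\in I}b_d(t_i)}{l_d}
$$
by a direct decomposition of the monic polynomials of degree $d$ together with the explicit description of $l_d$ and $b_d$. Recall that $S_d(1,\sigma_I)=\sum_{a\in A^+(d)}a^{-1}\sigma_I(a)$ with $\sigma_I(a)=\prod_{i\in I}\chi_{t_i}(a)$. Since $\Sigma=\{1,\ldots,q\}$ and $|I|<q$, the number of variables involved is at most $q-1$, which is precisely the range in which the combinatorics stay simple.

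First I would treat the base case $d=0$: then $A^+(0)=\{1\}$, $\sigma_I(1)=1$, and the right-hand side is the empty product over the empty product, namely $1$, so the identity holds. For the inductive step on $d$, I would write $a\in A^+(d)$ as $a=(\theta-\lambda)a'+r$ with $a'\in A^+(d-1)$, but the cleaner route is to use the standard factorization: $\prod_{a\in A^+(d)}$-type identities in function field arithmetic. Concretely, I would recall (from Carlitz) that $\sum_{a\in A^+(d)}\tfrac{1}{a}=\tfrac{1}{l_d}$ where $l_d=(\theta-\theta^q)(\theta-\theta^{q^2})\cdots(\theta-\theta^{q^d})$ (up to the sign/ordering conventions fixed in the excerpt), and more precisely the full interpolation identity $\sum_{a\in A^+(d)}\tfrac{\chi_t(a)}{a}=\tfrac{b_d(t)}{l_d}$, which is exactly the $|I|=1$ case of Pellarin's earlier work. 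The point is then to iterate: since $\chi_{t_i}$ is an $\FF_q$-algebra homomorphism and the $t_i$ are algebraically independent, the product $\prod_{i\in I}\chi_{t_i}(a)$ behaves multiplicatively, and one expects
$$
\sum_{a\in A^+(d)}\frac{\prod_{i\in I}\chi_{t_i}(a)}{a}=\frac{\prod_{i\in I}b_d(t_i)}{l_d}
$$
to follow from the same "divided power sum" computation, provided $|I|\le q-1$ so that no extra $l_d$-type denominators are created by collisions among the linear factors $\theta-\lambda$.

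The key technical step, and the main obstacle, is to justify that the single-variable identity $\sum_{a\in A^+(d)}\chi_t(a)a^{-1}=b_d(t)/l_d$ can be promoted to the multivariable product form without any correction terms in the stated range $|I|<q$. I would do this by the partial-fraction / one-degree reduction technique already used in the paper: write each $a\in A^+(d)$ in the form $n+\mu m$ running over $\mu\in\FF_q$ with $n\in A^+(d)$, $m\in A^+(<d)$, use that $\chi_{t_i}(n+\mu m)=\chi_{t_i}(m)\chi_{t_i}(\tfrac nm+\mu)$, and reduce to the degree-one computation $\sum_{\mu\in\FF_q}\prod_{i\in I}(\chi_{t_i}(\tfrac nm)+\mu)/(\tfrac nm+\mu)$. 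For $|I|<q$ this degree-one sum is a proper rational function with denominator $\prod_{\lambda\in\FF_q}(\theta-\lambda)=[1]$ and numerator of controlled degree, and summing over the partition of $A^+(d-1)$ reconstitutes exactly $\prod_{i\in I}b_d(t_i)/l_d$ by induction on $d$. The restriction $|I|<q$ is exactly what keeps the numerator of this degree-one sum of degree $<q$ so that no spurious $[1]$ appears; once $|I|\ge q$ (which does not occur here since $|\Sigma|=q$) one would pick up the extra terms seen in the formula $(\ref{S0})$ for $S_1(0,\sigma_\Sigma)$. Finally, I would check the conventions $b_0=l_0=1$ match the empty-product normalization, so the formula reads correctly at $d=0$ and the induction closes.
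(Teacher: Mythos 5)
Your proposal takes a genuinely different route from the paper, and it has a gap that needs filling.

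The paper's proof is very short: it invokes the closed formula from \cite{PEL3} (formula (\ref{pelandperk}) in the present paper) for $F_{d+1}(1,\sigma_\Sigma)=l_d^{-1}\prod_{i\in\Sigma}b_d(t_i)$ in the extremal case $|\Sigma|=q$, and then extracts the coefficient of the monomial $\prod_{j\in\Sigma\setminus I}t_j^d$ on both sides. On the left only the top term $S_d(1,\sigma_\Sigma)$ contributes to that coefficient (the lower $S_i$'s have degree $<d$ in each $t_j$), and since each $\chi_{t_j}(a)$ is monic of degree $d$ in $t_j$ the coefficient is exactly $S_d(1,\sigma_I)$; on the right it is $l_d^{-1}\prod_{i\in I}b_d(t_i)$ because $b_d$ is monic of degree $d$. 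That is the whole proof. Your proposal instead wants a self-contained inductive argument on $d$, starting from the one-variable identity $S_d(1,\chi_t)=b_d(t)/l_d$ and ``promoting'' it to the product form for $|I|<q$.

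The gap is in the promotion step. You write that ``the product $\prod_{i\in I}\chi_{t_i}(a)$ behaves multiplicatively, and one expects'' the multivariable identity to follow, but that is precisely the point at issue: $S_d(1,\sigma_I)$ is \emph{not} the product of the one-variable sums (that product would carry an $l_d^{|I|}$ in the denominator, not $l_d$). Your attempted repair via the coset decomposition $a=n+\mu m$ and the $\psi_{n,m}$-type substitution used for Theorem~\ref{simplesumshuffle} is not carried through: for a single power sum one must first fix a rule for choosing $m$ so that the $q$-element cosets $\{n+\mu m:\mu\in\FF_q\}$ actually partition $A^+(d)$, and after substituting $\theta\mapsto n/m$, $t_i\mapsto\chi_{t_i}(n/m)$ into the degree-one identity, the resulting terms are rational expressions in $n/m$ whose sum over cosets is not visibly the inductive hypothesis $\prod_{i\in I}b_{d-1}(t_i)/l_{d-1}$ times anything; the ``no spurious $[1]$'' degree argument that works when the base points are $\lambda\in\FF_q$ does not transfer once $\theta$ is replaced by $n/m$. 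So the sentence ``summing over the partition of $A^+(d-1)$ reconstitutes exactly $\prod_{i\in I}b_d(t_i)/l_d$ by induction on $d$'' is asserted, not proved, and it is exactly the nontrivial content of the lemma. To make your approach work you would either have to reprove the $|\Sigma|=q$ identity (\ref{pelandperk}) from scratch (which is the hard external input the paper leans on), or find a genuinely different recursion; neither is supplied. Your $d=1$ analysis and the observation that $|I|<q$ is what prevents extra terms coming from $\sum_{\lambda\in\FF_q}(\theta-\lambda)^m$ (which vanishes for $0\le m<q-1$) is correct and matches how formula (\ref{S0}) would intervene for $|I|\ge q$; but that only settles the base case $d=1$, not the induction.
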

\begin{proof}
This formula was first observed by Rudolph Perkins. We can deduce it from the formula 
(5) of \cite{PEL3} (see also the preprint \cite{PEL&PER2}), where it is proved, for $\Sigma=\{1,\ldots,q\}$:
\begin{equation}\label{pelandperk}
F_{d+1}(1,\sigma_\Sigma):=\sum_{i=0}^dS_i(1,\sigma_\Sigma)=l_d^{-1}\prod_{i\in\Sigma}b_d(t_i),\quad d\geq1.\end{equation}
Observe that, since $I\subsetneq\Sigma$, $S_d(1,\sigma_I)\in K[\underline{t}_I]$ is, for $d\geq 1$, the coefficient of 
$\prod_{j\in\Sigma\setminus I}t_j^d$ in the polynomial 
$F_{d+1}(1,\sigma_\Sigma)\in K[\underline{t}_\Sigma]$. From this and from the definition of the polynomials $b_d$, we obtain the formula of the lemma for $d\geq 1$, while the result for $d=0$ is obvious.
\end{proof}
Additionally, we recall from \cite[Lemma 8]{PEL3}, the formula:
 \begin{equation}\label{twistbd}
F_d(1,\chi_t)=\sum_{j=0}^{d-1}S_j(1,\chi_t)=\sum_{j=0}^{d-1}\frac{b_j(t)}{l_j}=\frac{b_d(t)}{(t_i-\theta)l_{d-1}}=\frac{\tau(b_{d-1})(t)}{l_{d-1}},\quad d>0,
\end{equation}
where $\tau$ is the $\FF_q[t]$-linear endomorphism of $K[[t]]$ which associates to a formal series 
$\sum_if_it^i$ the formal series $\sum_if_i^qt^i$.
This formula is easily proved by induction, and occurs in several other references in function field arithmetic.

The simplest example of linear relation is the so-called Euler-Thakur relation, holding 
for $\Sigma=\emptyset$. 
Thakur proved the formula
\begin{equation}\label{Thakurformula}
\zeta_A(1,q-1)=\zeta_A\left(\begin{matrix} \boldsymbol{1} & \boldsymbol{1} \\ 1 & q-1\end{matrix}\right)=\frac{1}{\theta-\theta^q}\zeta_A(q).\end{equation}
This can be viewed, up to certain extent, as an analogue of the famous formula $\zeta(2,1)=\zeta(3)$ by Euler. We recall the proof.
We apply $\tau$ to both left- and right-hand sides of (\ref{twistbd}).
We get the identity
$$\sum_{j=1}^{d-1}\frac{\tau(b_j)(t)}{l_j^q}=\frac{\tau(b_d)(t)}{(t-\theta^q)l_{d-1}^q},\quad d\geq 1.$$
Since $(\tau(b_i)(t))_{t=\theta}=l_i$ for all $i$, we get the identity
$$\sum_{j=0}^{d-1}\frac{1}{l_j^{q-1}}=\frac{l_d}{(\theta-\theta^q)l_{d-1}^q}=\frac{1}{l_{d-1}^{q-1}}.$$
Therefore, with $F_d(q-1)=\sum_{j=0}^{d-1}S_j(q-1)$ and for all $d\geq 1$:
\begin{eqnarray*}
S_d(1,q-1)&=&l_d^{-1}F_d(q-1)\\
&=&=l_{d-1}^{-1}l_{d-1}^{-q+1}\\
&=&\frac{l_{d-1}^{-q}}{\theta-\theta^q}\\
&=&\frac{S_{d-1}(q)}{\theta-\theta^q}.
\end{eqnarray*}
Summing over $d\geq 1$ we get the identity.

We also deduce, from (\ref{twistbd}), for $\Sigma=\{1,\ldots,s\}$ with $s\leq q$:
\begin{equation}\label{formulauseful}
S_d\left(\begin{matrix}\sigma_{\Sigma\setminus\{i\}} & \chi_{t_i} \\
1 & 1\end{matrix}\right)=S_d(1,\sigma_{\Sigma\setminus\{i\}})\sum_{j=0}^{d-1}S_j(1,\chi_{t_i})=\frac{1}{t_i-\theta}\sum_{d\geq 1}\frac{\prod_{j\in\Sigma}b_d(t_j)}{l_dl_{d-1}},\quad i\in\Sigma.
\end{equation}
We immediately obtain:
\begin{Lemma}\label{lemmanewformula}
If $|\Sigma|<q$ and if $i,j$ are distinct elements of $\Sigma$, then\label{anontrivialrelation}
$$\zeta_A\left(\begin{matrix}\sigma_{\Sigma\setminus\{i\}} & \sigma_{\{i\}} \\
1 & 1\end{matrix}\right)=\frac{t_j-\theta}{t_i-\theta}
\zeta_A\left(\begin{matrix}\sigma_{\Sigma\setminus\{j\}} & \sigma_{\{j\}} \\
1 & 1\end{matrix}\right).$$\end{Lemma}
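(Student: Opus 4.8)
The plan is to read off both double zeta values from the single closed form already isolated in (\ref{formulauseful}), and to observe that the dependence on the chosen index is confined to one linear factor. First I would note that formula (\ref{rudy1}), although stated for $\Sigma=\{1,\ldots,q\}$, holds verbatim for any finite index set of cardinality $<q$: the variables $t_k$ are algebraically independent and the identity is symmetric, so it survives relabeling. Applying it to $I=\Sigma\setminus\{i\}$ and combining with (\ref{twistbd}) for $t=t_i$, exactly as in the derivation of (\ref{formulauseful}), gives
$$S_d\left(\begin{matrix}\sigma_{\Sigma\setminus\{i\}} & \sigma_{\{i\}} \\ 1 & 1\end{matrix}\right)=\frac{1}{t_i-\theta}\cdot\frac{\prod_{k\in\Sigma}b_d(t_k)}{l_dl_{d-1}},\quad d\geq 1,$$
while the $d=0$ term vanishes because a depth-$2$ power sum of degree $0$ is an empty sum. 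Here the hypothesis $|\Sigma|<q$ is used only to licence the application of (\ref{rudy1}) to the set $\Sigma\setminus\{i\}$.

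Next I would sum over $d\geq 1$. Writing $T:=\sum_{d\geq 1}\frac{\prod_{k\in\Sigma}b_d(t_k)}{l_dl_{d-1}}\in\TT_\Sigma(K_\infty)$, which is a fully symmetric expression in the variables $t_k$ for $k\in\Sigma$ and in particular does not depend on $i$, we get
$$\zeta_A\left(\begin{matrix}\sigma_{\Sigma\setminus\{i\}} & \sigma_{\{i\}} \\ 1 & 1\end{matrix}\right)=\frac{T}{t_i-\theta},\qquad \zeta_A\left(\begin{matrix}\sigma_{\Sigma\setminus\{j\}} & \sigma_{\{j\}} \\ 1 & 1\end{matrix}\right)=\frac{T}{t_j-\theta}.$$
Multiplying the first identity by $(t_i-\theta)$ and the second by $(t_j-\theta)$ shows that both products equal the same element $T$; equating them yields
$$(t_i-\theta)\,\zeta_A\left(\begin{matrix}\sigma_{\Sigma\setminus\{i\}} & \sigma_{\{i\}} \\ 1 & 1\end{matrix}\right)=(t_j-\theta)\,\zeta_A\left(\begin{matrix}\sigma_{\Sigma\setminus\{j\}} & \sigma_{\{j\}} \\ 1 & 1\end{matrix}\right),$$
which is the asserted relation after dividing by $t_i-\theta$. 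If one prefers the form with $T$ eliminated by division, this is legitimate since the multiple zeta values involved, hence $T$, are non-zero by Proposition \ref{twists}.

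There is essentially no obstacle: the whole content is already carried by (\ref{formulauseful}), and the only points to verify are the harmless extension of (\ref{rudy1}) to an arbitrary index set of cardinality $<q$ and the bookkeeping that the surviving factor $\prod_{k\in\Sigma}b_d(t_k)/(l_dl_{d-1})$ is symmetric and independent of the distinguished index. The mild care needed is to record that the $d=0$ contribution is absent, so that the summation over $d\geq 1$ is exactly the series defining $T$.
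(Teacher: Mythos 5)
Your proof is correct and follows the same route as the paper: the paper itself derives (\ref{formulauseful}) from (\ref{rudy1}) and (\ref{twistbd}) and then states that Lemma \ref{lemmanewformula} is an immediate consequence, which is exactly the symmetry observation you spell out. The only small cosmetic improvement you make is to write the degree-$d$ term cleanly and note explicitly that the $d=0$ contribution is empty before summing.
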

For the next example of linear relation, we have a similar lemma.
\begin{Lemma}\label{anotherformula}
We suppose that $s=|\Sigma|\leq 2q-1$.
We write $\Sigma=U\sqcup V=U'\sqcup V'$ with $|U|=|U'|=q$ and $|V|=|V'|=r$.
Then, we have 
$$\zeta_A\left(\begin{matrix}\sigma_V & \sigma_U\\
1 & q \end{matrix}\right)=\frac{\prod_{i'\in U'}(t_{i'}-\theta)}{\prod_{i\in U}(t_{i}-\theta)}\zeta_A\left(\begin{matrix}\sigma_{V'} & \sigma_{U'}\\
1 & q \end{matrix}\right).$$
\end{Lemma}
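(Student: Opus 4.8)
The plan is to compute $\zeta_A\left(\begin{matrix}\sigma_V & \sigma_U\\ 1 & q\end{matrix}\right)$ in closed form, just as Lemma~\ref{lemmanewformula} was obtained from (\ref{formulauseful}), and to observe that after multiplication by $\prod_{i\in U}(t_i-\theta)$ the result depends only on $\Sigma$, not on the splitting $\Sigma=U\sqcup V$. Expanding Definition~\ref{defpowersums}, the left-hand side equals $\sum_{d\geq 1}S_d(1;\sigma_V)\,F_d(q;\sigma_U)$, where $F_d(q;\sigma_U)=\sum_{e=0}^{d-1}S_e(q;\sigma_U)$ (the $d=0$ term being $0$). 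The conceptual core is the remark that, since $\tau$ is $\FF_q[\undt{\Sigma}]$-linear and restricts to $c\mapsto c^q$ on $K_\infty$, it sends $S_e(1;\sigma_U)=\sum_{a\in A^+(e)}a^{-1}\sigma_U(a)$ to $\sum_{a\in A^+(e)}a^{-q}\sigma_U(a)=S_e(q;\sigma_U)$; being additive, it thus maps the finite partial sum $F_d(1;\sigma_U)$ to $F_d(q;\sigma_U)$. So the weight-$q$ inner sum is a Frobenius twist of the weight-$1$ one, for which a closed form is available.

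I now use the hypothesis $s\leq 2q-1$ twice. First, $|U|=q$, so by (\ref{pelandperk}) applied to the $q$-element index set $U$ one has $F_d(1;\sigma_U)=l_{d-1}^{-1}\prod_{i\in U}b_{d-1}(t_i)$ for all $d\geq1$ (for $d=1$ this reads $S_0(1;\sigma_U)=1=l_0^{-1}\prod_{i\in U}b_0(t_i)$). Since $\tau$ is a ring endomorphism of $\TT_\Sigma(K_\infty)$, applying it factor by factor and using the elementary identity $b_d(t)=(t-\theta)\,\tau(b_{d-1})(t)$ already exploited in (\ref{twistbd}), I get
\[
F_d(q;\sigma_U)=l_{d-1}^{-q}\prod_{i\in U}\frac{b_d(t_i)}{t_i-\theta}.
\]
Second, $|V|=s-q\leq q-1<q$, so (\ref{rudy1}) gives $S_d(1;\sigma_V)=l_d^{-1}\prod_{j\in V}b_d(t_j)$. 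Multiplying the two and using $U\sqcup V=\Sigma$, each term becomes $\dfrac{\prod_{k\in\Sigma}b_d(t_k)}{l_d\,l_{d-1}^{q}\,\prod_{i\in U}(t_i-\theta)}$, hence
\[
\prod_{i\in U}(t_i-\theta)\cdot\zeta_A\left(\begin{matrix}\sigma_V & \sigma_U\\ 1 & q\end{matrix}\right)=\sum_{d\geq1}\frac{\prod_{k\in\Sigma}b_d(t_k)}{l_d\,l_{d-1}^{q}}.
\]

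The right-hand side of the last display manifestly depends only on $\Sigma$. Running the same computation with $U',V'$ in place of $U,V$ yields the identical series, and equating the two identities and dividing by $\prod_{i\in U}(t_i-\theta)$ produces exactly the asserted relation. I do not anticipate a genuine obstacle: beyond the bookkeeping, the only points needing care are that $\tau$ commutes with the finite sums $F_d$ and is multiplicative on $\TT_\Sigma(K_\infty)$ — so that it may be applied factor by factor to $l_{d-1}^{-1}\prod_{i}b_{d-1}(t_i)$, giving $\tau(l_{d-1}^{-1})=l_{d-1}^{-q}$ and $\tau(b_{d-1})(t_i)=b_d(t_i)/(t_i-\theta)$ — and the harmless $d=1$ boundary case noted above; convergence of the resulting series in $\TT_\Sigma(K_\infty)$ is automatic since the left-hand side is a well-defined multiple zeta value.
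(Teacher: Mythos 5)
Your argument is correct and is essentially identical to the paper's own proof: both use (\ref{pelandperk}) to write $F_d(1;\sigma_U)=l_{d-1}^{-1}\prod_{i\in U}b_{d-1}(t_i)$, apply $\tau$ to obtain the weight-$q$ partial sum, combine with (\ref{rudy1}) for $|V|<q$, and observe that $\prod_{i\in U}(t_i-\theta)\,S_d\bigl(\begin{smallmatrix}\sigma_V&\sigma_U\\1&q\end{smallmatrix}\bigr)=\prod_{k\in\Sigma}b_d(t_k)/(l_d\,l_{d-1}^q)$ depends only on $\Sigma$. The extra care you take over the $d=1$ boundary and the multiplicativity of $\tau$ is a sound expansion of the details the paper leaves implicit.
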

\begin{proof}
From (\ref{pelandperk}) we deduce the following formula:
$$F_d(1,\sigma_U):=\sum_{i=0}^{d-1}S_i(1,\sigma_U)=l_{d-1}^{-1}\prod_{i\in U}b_{d-1}(t_i),\quad d\geq 1,$$
and a similar formula holds for $U'$. Therefore
$$F_d(q,\sigma_U):=\tau(F_d(1,\sigma_U))=\sum_{i=0}^{d-1}S_i(q,\sigma_U)=l_{d-1}^{-q}\frac{\prod_{i\in U}b_{d}(t_i)}{\prod_{i\in U}(t_i-\theta)}$$ and analogously for $U'$. Since by the formula (\ref{rudy1})
$$S_d(1,\sigma_V)=\frac{\prod_{i\in V}b_d(t_i)}{l_d}$$ and similarly for $V'$, we get the identity
$$S_d\left(\begin{matrix}\sigma_V & \sigma_U\\
1 & q \end{matrix}\right)\prod_{i\in U}(t_i-\theta)=\frac{\prod_{i\in\Sigma}b_d(t_i)}{l_{d}l_{d-1}^q}=
S_d\left(\begin{matrix}\sigma_{V'} & \sigma_{U'}\\
1 & q \end{matrix}\right)\prod_{i\in U'}(t_i-\theta).$$ Summing over $d\geq 1$ proves the lemma.
\end{proof}

By using Proposition \ref{twists}, note that, for $\Sigma=\{1,\ldots,q\}$:
\begin{multline*}
\tau\left(\zeta_A\left(\begin{matrix} \sigma_{\Sigma\setminus\{i\}} & \sigma_{\{i\}} \\ 1 & 1\end{matrix}\right)\right)_{t_1=\cdots=t_q=\theta}=\zeta_A(1,q-1),\quad i=1,\ldots,q, \\
\tau\left(\zeta_A\left(\begin{matrix} \sigma_{\Sigma} \\ 2 \end{matrix}\right)\right)_{t_1=\cdots=t_q=\theta}=\zeta_A(q).\end{multline*}

It would be nice to see Euler-Thakur's identity arising as specialization of 
a linear relation between the above multiple zeta values, but this does not correspond to the correct intuition. 
We have already seen that the values $\zeta_A\left(\begin{smallmatrix} \sigma_{\Sigma\setminus\{i\}} & \sigma_{\{i\}} \\ 1 & 1 \end{smallmatrix}\right)$ for $i=1,\ldots,q$ generate a $K(\underline{t}_\Sigma)$-subvector space
of dimension $1$. In the opposite direction, we show:
\begin{Lemma}\label{conjecture3}
Assuming that $\Sigma=\{1,\ldots,s\}\subset\{1,\ldots,q\}$ and that $s>0$, the values
$$\zeta_A\left(\begin{matrix} \sigma_{\Sigma\setminus\{s\}} & \chi_{t_s} \\ 1 & 1\end{matrix}\right),\quad\zeta_A\left(\begin{matrix}\sigma_\Sigma \\
2\end{matrix}\right)$$ are linearly independent over $K(\underline{t}_\Sigma)$.
\end{Lemma}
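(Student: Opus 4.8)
The plan is to specialize the variable $t_s$ to the value $\theta$. This is legitimate because, by \cite[Proposition 4]{PEL3}, both multiple zeta values lie in $\EE_\Sigma(K_\infty)$, hence extend to entire functions of $(t_j)_{j\in\Sigma}\in\CC_\infty^{|\Sigma|}$ and may be evaluated at any point --- in particular at one with $t_s=\theta$, even though $|\theta|>1$.

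First I would carry out the two specializations termwise, writing $\sigma':=\sigma_{\Sigma\setminus\{s\}}$. The point is that $\chi_\theta\colon A\to A$ is the identity and $|A^+(j)|=q^j\equiv0\pmod p$ for $j\geq1$ --- equivalently, from $(\ref{twistbd})$, $F_d(1,\chi_t)|_{t=\theta}=\tau(b_{d-1})(\theta)/l_{d-1}=l_{d-1}/l_{d-1}=1$ for $d\geq1$. Hence $S_d\left(\begin{smallmatrix}\sigma' & \chi_{t_s}\\ 1 & 1\end{smallmatrix}\right)\big|_{t_s=\theta}=S_d(1,\sigma')$ for $d\geq1$, and, using $\chi_\theta=\mathrm{id}$ again, $S_d(2,\sigma_\Sigma)|_{t_s=\theta}=S_d(1,\sigma')$ for $d\geq0$. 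Since the substitution $t_s=\theta$ is continuous on the relevant spaces of entire functions and the series $\sum_d S_d(1,\sigma')$ still converges, summing over $d$ yields
$$
\zeta_A\left(\begin{matrix}\sigma_{\Sigma\setminus\{s\}} & \chi_{t_s}\\ 1 & 1\end{matrix}\right)\Big|_{t_s=\theta}=\zeta_A(1,\sigma_{\Sigma\setminus\{s\}})-1,\qquad
\zeta_A\left(\begin{matrix}\sigma_\Sigma\\ 2\end{matrix}\right)\Big|_{t_s=\theta}=\zeta_A(1,\sigma_{\Sigma\setminus\{s\}}),
$$
the $-1$ being the missing term $S_0(1,\sigma')=1$.

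Now I would argue by contradiction. A nontrivial $K(\underline{t}_\Sigma)$-linear dependence, after clearing denominators and using that $\EE_\Sigma(K_\infty)$ is a domain in which, by Proposition \ref{twists}, neither value vanishes, produces coprime $P,Q\in K[\underline{t}_\Sigma]$, both nonzero, with $P\,\zeta_A\left(\begin{smallmatrix}\sigma_{\Sigma\setminus\{s\}} & \chi_{t_s}\\ 1 & 1\end{smallmatrix}\right)+Q\,\zeta_A\left(\begin{smallmatrix}\sigma_\Sigma\\ 2\end{smallmatrix}\right)=0$. Setting $t_s=\theta$ and $\bar P=P|_{t_s=\theta},\ \bar Q=Q|_{t_s=\theta}\in K[\underline{t}_{\Sigma\setminus\{s\}}]$, the identities above give $(\bar P+\bar Q)\,\zeta_A(1,\sigma_{\Sigma\setminus\{s\}})=\bar P$. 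If $\bar P+\bar Q\neq0$, this exhibits $\zeta_A(1,\sigma_{\Sigma\setminus\{s\}})$ as an element of $K(\underline{t}_{\Sigma\setminus\{s\}})$; if $\bar P+\bar Q=0$, then also $\bar P=\bar Q=0$, so $t_s-\theta$ divides both $P$ and $Q$, contradicting coprimality. Thus everything reduces to showing $\zeta_A(1,\sigma_{\Sigma\setminus\{s\}})\notin K(\underline{t}_{\Sigma\setminus\{s\}})$. If it equalled some $R$ in that field, then applying $\tau^N$ and invoking Proposition \ref{twists} with $r=1$ --- choosing, for $N$ large, exponents $k_j\in\{0,\dots,N-1\}$ ($j\in\Sigma\setminus\{s\}$) with $q^N>\sum_j q^{k_j}$ (possible since $|\Sigma\setminus\{s\}|=s-1\leq q-1$) and avoiding the bounded-degree denominator of $\tau^N(R)$ --- forces the Carlitz zeta value $\zeta_A\!\big(q^N-\sum_j q^{k_j}\big)$, of weight $\geq1$, to lie in $K$, contradicting the transcendence over $K$ of Carlitz zeta values (for $s=1$ this is just $\zeta_A(1)\notin K$).

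The substantive inputs, beyond the explicit power-sum formulas of the preceding sections, are the entireness of \cite[Proposition 4]{PEL3} (which legitimizes the substitution $t_s=\theta$) and the transcendence of the Carlitz zeta values; the remaining points --- continuity of the specialization with respect to the series, and the simultaneous choice of $N$ and the $k_j$ meeting the hypothesis of Proposition \ref{twists} while missing finitely many poles --- are routine bookkeeping and present no real obstacle.
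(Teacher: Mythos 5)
Your proof is correct and follows essentially the same route as the paper's: evaluate at $t_s=\theta$ using the explicit identity $F_d(1,\chi_{t})|_{t=\theta}=1$ for $d\geq1$, reduce to showing $1$ and $\zeta_A(1,\sigma_{\Sigma\setminus\{s\}})$ are linearly independent over $K(\underline{t}_{\Sigma\setminus\{s\}})$, then apply $\tau^N$ and specialize the remaining variables via Proposition \ref{twists} to force a Carlitz zeta value $\zeta_A(m)$ into $K$, contradicting Chang--Yu. The only cosmetic differences are that you normalize by coprimality of $P,Q$ where the paper only assumes $t_s-\theta$ fails to divide one of them, and that the paper's Zariski-density argument with $t_i=\theta^{q^{-k_i}}$ is, after unwinding $\tau^N(X)_{t_i=\theta^{q^{N-k_i}}}=\big(X_{t_i=\theta^{q^{-k_i}}}\big)^{q^N}$, exactly your ``avoid the finitely many poles'' step.
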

\begin{proof}
We suppose by contradiction that there exist two polynomials $U,V\in A[\underline{t}_\Sigma]$, not both zero, such that 
$$U\zeta_A\left(\begin{matrix} \sigma_{\Sigma\setminus\{s\}} & \chi_{t_s} \\ 1 & 1\end{matrix}\right)=V
\zeta_A\left(\begin{matrix}\sigma_\Sigma \\
2\end{matrix}\right).$$ We can suppose that either $t_s-\theta\nmid U$, or $t_s-\theta\nmid V$.
Evaluating the above identity at $t_s=\theta$ and applying the same techniques of the proof of Proposition \ref{twists} (we only evaluate one variable) we thus get
\begin{equation}\label{before}
(U_{t_s=\theta})(\zeta_A(1,\sigma_{\Sigma\setminus\{s\}})-1)=(V_{t_s=\theta})\zeta_A(1,\sigma_{\Sigma\setminus\{s\}}).\end{equation} This implies that $1,\zeta_A(1,\sigma_{\Sigma\setminus\{s\}})$ are linearly 
dependent over $K(\underline{t}_{\Sigma\setminus\{s\}})$. However, this is impossible. To see this, we again use Proposition \ref{twists} (or rather, the arguments of \S \ref{nonvanishingexample}). We set $X=U_{t_s=\theta}\in K[\underline{t}_{\Sigma\setminus\{s\}}]$
and $Y=V_{t_s=\theta}\in K[\underline{t}_{\Sigma\setminus\{s\}}]$; they are not both identically zero.
Since the subset $\{(\theta^{q^{-k_1}},\ldots,\theta^{q^{-k_{s-1}}}):k_1,\ldots,k_{s-1}\in\NN\}$ is Zariski-dense in the affine space $\mathbb{A}^{s-1}(K^{1/p^\infty})$ (where we recall that $p$ is the characteristic of $\FF_q$ and $K^{1/p^\infty}$ denotes the subfield of $\CC_\infty$ generated by the subfields $\FF_q(\theta^{1/p^k})$, $k\geq 0$), we can choose $k_1,\ldots,k_{s-1}\geq 1$
such that 
$$X_{\begin{smallmatrix}t_i=\theta^{-q^{k_i}}\\ i\in \Sigma\setminus\{s\}\end{smallmatrix}},\quad Y_{\begin{smallmatrix}t_i=\theta^{-q^{k_i}}\\ i\in \Sigma\setminus\{s\}\end{smallmatrix}}\in K^{1/p^\infty}$$ are not both zero.
There exists $N\geq\max\{k_i:i\in\Sigma\setminus\{s\}\}$ an integer such that $m:=q^N-\sum_{i\in\Sigma\setminus\{s\}}q^{N-k_i}>0$. Indeed, Since $k_i\geq 1$ for all $i$, we have $\sum_{i\in\Sigma\setminus\{s\}}q^{N-k_i}\leq |\Sigma\setminus\{s\}|q^{N-1}<q^N$ because of the assumption on the cardinality of $\Sigma$, which is $\leq q$. 

Then, applying the $\FF_q[\underline{t}_{\Sigma\setminus\{s\}}]$-linear operator $\tau^N$ on both right- and left-hand sides of (\ref{before}) we obtain the identity:
$$\tau^N(X)(\zeta_A(q^N,\sigma_{\Sigma\setminus\{s\}})-1)=\tau^N(Y)\zeta_A(q^N,\sigma_{\Sigma\setminus\{s\}}).$$
Substituting $t_i=\theta^{q^{N-k_i}}$ for $i\in\Sigma\setminus\{s\}$ we thus obtain an identity in $K_\infty$:
$$\alpha (\zeta_A(m)-1)=\beta\zeta_A(m),$$ where $$\alpha=\tau^N(X)_{t_i=\theta^{q^{N-k_i}}\atop i\in\Sigma\setminus\{s\}}=\left(X_{t_i=\theta^{q^{-k_i}}\atop i\in\Sigma\setminus\{s\}}\right)^{q^N}\in K$$ and 
similarly, $\beta=\tau^N(Y)_{t_i=\theta^{q^{N-k_i}},i\in\Sigma\setminus\{s\}}\in K$, and $\alpha,\beta$ are not both zero. Now, since $m>0$, the Carlitz zeta value $\zeta_A(m)$ is non-zero. In fact, it is known that it does not belong to $K$ (this is, for instance, a microscopic consequence of \cite[Main Theorem]{CHA&YU}).
But this contradicts the identity obtained, implying that $1,\zeta_A(m)$ are linearly dependent over $K$.
\end{proof}

\begin{Remark}
{\em The Lemma \ref{conjecture3} can be proved in a more elegant 
way after having noticed that $\zeta_A\left(\begin{smallmatrix} \sigma_{\Sigma\setminus\{s\}} & \chi_{t_s} \\ 1 & 1\end{smallmatrix}\right)$ and $\zeta_A\left(\begin{smallmatrix}\sigma_\Sigma \\
2\end{smallmatrix}\right)$ generate a submodule of finite index of
a variant of Taelman's unit module as in \cite{TAE2,APTR} and in the Ph. D. Thesis of
Demeslay \cite{DEM}. Also, the use of deep transcendence results can be avoided by studying carefully sequences of evaluations of these functions which return elements of $K$ the denominators of which can be proved to be unbounded. The details will appear elsewhere in a more general setting.}
\end{Remark}

\subsubsection*{Acknowledgement} The author is thankful to the referee for pertinent suggestions that have contributed to improve the quality of the paper. This research started while the author was visiting the MPIM (Bonn) in February 2016 and the IHES (Bures-sur-Yvette) in April 2016. The author is thankful to both institutions for the very good environment that positively influenced the quality of this work.

\end{document}